\newtheorem{definition}{Definition}[section]
\newtheorem{theorem}{Theorem}[section]
\newtheorem{lemma}{Lemma}[section]
\newtheorem{proposition}{Proposition}[section]
\newtheorem{remark}{Remark}[section]
\begin{document}
\title{Dimensional reduction of stable Higgs bundle and the Doubly-Coupled Vortex Equations}
\author{Takashi Ono\thanks{Department of Mathematics, Graduate School of Science, Osaka University, Osaka, Japan, u708091f@ecs.osaka-u.ac.jp}}
\date{}
\maketitle
\begin{abstract}
Let $X$ be a compact Riemann surface and $\mathbb{P}^1$ be the complex projective line.
In this paper, we introduce an equation which we call the doubly-coupled vortex equation on $X$.
We show that the existence of a solution of the doubly-coupled is equivalent to the existence of an $SU(2)$-invariant Hermitian-Einstein metric on certain Higgs bundles over $X\times \mathbb{P}^1$.
By applying the Kobayashi-Hitchin correspondence for Higgs bundles, we further show that the existence of a solution to the doubly-coupled vortex equation is equivalent to the stability of the associated Higgs quadruplet.
\end{abstract}

\section{Introduction}
Let $(X,\omega)$ be a compact Riemann Surface with a K\"ahler form $\omega$.
A \textit{holomorphic triple} over $X$ is a triple $((E_1, \overline\partial_{E_1}),(E_2,\overline\partial_{E_2}),\psi)$ consisting of holomorphic bundles $(E_1, \overline\partial_{E_1})$ and $(E_2,\overline\partial_{E_2})$ over $X$ and $\psi:E_2\to E_1$ is a holomorphic bundle morphism.
Holomorphic triples are a generalization of \textit{pairs}, which are pairs of a holomorphic bundle and a holomorphic section. 
Pairs were introduced in \cite{B1, B2} and triples were developed in \cite{G1, G2, BG}.\par
The stability of holomorphic bundles is generalized to holomorphic triples. 
There is also a Kobayashi-Hitchin type correspondence for triples: the stability of a pair corresponds to the existence of a solution to the vortex equation, and the stability of a triple corresponds to the existence of a solution to the coupled vortex equation.\par
In \cite{G2}, it was shown that solutions of the coupled vortex equation on $X$ arise via dimensional reduction of $SU(2)$-invariant Hermitian-Einstein metrics on holomorphic bundles on $X\times \mathbb{P}^1$, where $\mathbb{P}^1$ is the complex projective line. 
The classical Kobayashi-Hitchin correspondence asserts that the existence of a Hermitian-Einstein metric is equivalent to the stability of the corresponding holomorphic bundle.
 In \cite{BG}, it was shown that stable holomorphic triples on $X$ correspond to stable holomorphic vector bundles on $X \times \mathbb{P}^1$.
The ideas developed in these works were generalized in subsequent papers \cite{AG1, AG2, AG3, BGK}.\par

In this paper, we introduce an equation which we call the \textit{doubly-coupled vortex equation} on $X$.
We show that solutions to the doubly-coupled vortex equation naturally arise via dimensional reduction from $SU(2)$-invariant Hermitian-Einstein metrics on Higgs bundles over $ X\times\mathbb{P}^1$.
We also introduce a geometric object on $X$ which we call the \textit{Higgs quadruplets} and extend the notion of stability from holomorphic triples to Higgs quadruplets.
The main result of this paper is a Kobayashi-Hitchin type correspondence: the existence of a solution to the doubly-coupled vortex equation is equivalent to the stability of the corresponding Higgs quadruplet.
Our proof proceeds indirectly.
We first show that the existence of a solution of the doubly-coupled is equivalent to the existence of an $SU(2)$-invariant Hermitian-Einstein metric on a certain Higgs bundle over $X\times \mathbb{P}^1$.
We then show that the stability of the Higgs quadruplet is equivalent to the stability of the corresponding Higgs bundle over $X\times \mathbb{P}^1$ and apply the Kobayashi-Hitchin correspondence for Higgs bundles as established by Hitchin and Simpson \cite{Hit, S1}.
\par

To formulate the results rigorously, we now introduce the underlying mathematical framework for the doubly-coupled vortex equations and the notion of Higgs quadruplets.\par

Let $(E,\overline\partial_{E},\theta)$ be a Higgs bundle over $X$: $(E,\overline\partial_E)$ is a holomorphic bundle over $X$ and $\theta$ is a $\mathrm{End}E$-valued holomorphic 1-form.
Let $h$ be a hermitian metric of $E$, $\partial_h$ be the (1,0)-part of the Chern connection with respect to $\overline\partial_E$ and $h$.  
We denote by  $\theta^\dagger_h$ the formal adjoint of $\theta$ with respect to $h$.
We denote the curvature of the Chern connection $\partial_h+\overline\partial_E$ as $F_h$ and $[\theta,\theta^\dagger_h]:=\theta\wedge\theta^\dagger_h+\theta^\dagger_h\wedge \theta$.\par

We say that a quadruplet $Q=((E_1,\overline\partial_{E_1},\theta_1),(E_2,\overline\partial_{E_2},\theta_2),\phi,\psi)$ is a Higgs quadruplet if $(E_1,\overline\partial_{E_1},\theta_1)$ and $(E_2,\overline\partial_{E_2},\theta_2)$ are Higgs bundles over $X$, $\phi:E_1\to E_2,\psi:E_2\to E_1$ are Higgs bundle morphisms (i.e. $\phi\circ(\overline\partial_{E_1}+\theta_{E_1})=(\overline\partial_{E_2}+\theta_{E_2})\circ\phi, (\overline\partial_{E_1}+\theta_{E_1})\circ\psi=\psi\circ(\overline\partial_{E_2}+\theta_{E_2})$ holds), and $\phi\circ\psi=\psi\circ\phi=0$ holds.
This last condition arises naturally from the dimensional reduction  (See Section \ref{sec 3}).
Let $h_1$ and $h_2$ be hermitian metric of $E_1$ and $E_2$.
We say that the triple $(Q,h_1,h_2)$ is a solution of the doubly-coupled $\tau$-vortex equation if the following system holds: 
\begin{equation*}\left\{ \,
\begin{split}
&\Lambda (F_{h_1}+[\theta_1,\theta^\dagger_{1,h_1}])-\sqrt{-1}\phi^\ast\circ \phi-\sqrt{-1}\psi\circ \psi^\ast+2\pi\sqrt{-1}\tau\mathrm{Id}_{E_1}=0,\\
&\Lambda (F_{h_2}+[\theta_2,\theta^\dagger_{2,h_2}])+\sqrt{-1}\phi\circ \phi^\ast+\sqrt{-1}\psi^\ast\circ \psi+2\pi\sqrt{-1}\tau'\mathrm{Id}_{E_2}=0
\end{split}
\right.
\end{equation*}
holds.
Here $\phi^\ast$ and $\psi^\ast$ are the formal adjoint of $\phi$ and $\psi$ with respect to the metrics $h_1$ and $h_2$.
The constants $\tau$ and $\tau'$ cannot be independent (See Section \ref{sec 3}).
\par
We named the equation the doubly-coupled $\tau$-vortex equation to reflect its structure, where two Higgs bundles are coupled via two morphisms in both directions.
This terminology generalizes the coupled $\tau$-vortex equation introduced in \cite{G2} and studied further in \cite{BG, BGP}, which involved a single pair of bundles coupled by a single morphism.
 
\par
We now explain how a triple $(Q,h_1,h_2)$ that is a solution of the doubly-coupled $\tau$-vortex equation is related to an $SU(2)$-invariant Hermitian-Einstein metric on a certain Higgs bundle over $X\times \mathbb{P}^1$.
This perspective generalizes the dimensional reduction framework of \cite{G2} and provides a foundation for the Kobayashi-Hitchin type correspondence.
Before we proceed, we prepare some notations.
Let $\omega_{\mathbb{P}^1}$ be the Fubini-study form on $\mathbb{P}^1$ and 
 define a constant $\sigma$ by 
\begin{equation}\label{intro eq1}
\sigma:=\frac{(\mathrm{rank}E_1+\mathrm{rank}E_2)\tau-\mathrm{deg}E_1-\mathrm{deg}E_2}{\mathrm{rank}E_2}.
\end{equation} 
We assume that $\sigma>0$ holds.
We consider the K\"ahler form $\Omega_\sigma:=(\frac{\sigma}{2}\omega)\oplus \omega_{\mathbb{P}^1}$ on $X\times \mathbb{P}^1$, where
we normalize both $\omega$ and $ \omega_{\mathbb{P}^1}$ so that $\int_X\omega=\int_{\mathbb{P}^1} \omega_{\mathbb{P}^1}=1$.
\par

We assume $SU(2)$ acts on $X$ trivially and $\mathbb{P}^1$ as described in Section \ref{sec2}.
We define an $SU(2)$-invariant Higgs bundle $(F,\overline\partial_F,\theta_F)$ over $X\times \mathbb{P}^1$ by:
 \begin{align*}
&F=p^\ast E_1\oplus p^\ast E_2\otimes q^\ast \mathcal{O}_{\mathbb{P}^1}(2),\\
&\overline\partial_F:=
\begin{pmatrix}
p^\ast\overline\partial_{E_1}&p^\ast\psi\otimes q^\ast\alpha\\
0& p^\ast\overline\partial_{E_2}\otimes Id+Id\otimes q^\ast\overline\partial_{\mathcal{O}_{\mathbb{P}^1}(2)}
\end{pmatrix}
,\\
&\theta_F:=
\begin{pmatrix}
p^\ast \theta_1&0\\
p^\ast\phi\otimes q^\ast \beta&p^\ast\theta_2
\end{pmatrix}.
\end{align*}

See Proposition \ref{prop 4.5} for the proof that $(F,\overline\partial_F,\theta_F)$ is indeed a Higgs bundle over $X\times \mathbb{P}^1$ and see Section \ref{sec 2.3} for the definitions of $\alpha$ and $\beta$.

\begin{proposition}[Proposition \ref{prop 4.7}, Proposition \ref{prop 5.1}]\label{prop 1.1}
The following conditions are equivalent.
\begin{itemize}
\item There exist hermitian metrics $h_1$ and $h_2$ on $E_1$ and $E_2$ such that $(Q,h_1,h_2)$ is a solution of the doubly-coupled $\tau$-vortex equation.
\item There exist $SU(2)$-invariant Hermitian-Einstein metrics on the Higgs bundle $(F,\overline\partial_F,\theta_F)$.
\end{itemize}
\end{proposition}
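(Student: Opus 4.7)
My approach is to show that the correspondence $H \leftrightarrow (h_1, h_2)$ between $SU(2)$-invariant Hermitian metrics on $F$ and pairs of Hermitian metrics on $(E_1, E_2)$ is a bijection, and that under this bijection the Hermitian--Einstein equation for $H$ on the Higgs bundle $(F,\overline\partial_F,\theta_F)$ becomes precisely the doubly-coupled $\tau$-vortex system for $(h_1, h_2)$. Both directions of the equivalence then follow from this single block calculation.

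The first step is to pin down the $SU(2)$-invariant Hermitian metrics on $F$. Since $SU(2)$ acts trivially on $X$ and on $\mathcal{O}_{\mathbb{P}^1}(2)$ through a non-trivial irreducible representation, the summands $p^*E_1$ and $p^*E_2\otimes q^*\mathcal{O}_{\mathbb{P}^1}(2)$ of $F$ lie in distinct isotypic components. By Schur's lemma every invariant Hermitian metric $H$ is block-diagonal and of the form $H = p^*h_1 \oplus (p^*h_2 \otimes q^*h_{\mathcal{O}(2)})$, where $(h_1,h_2)$ are Hermitian metrics on $(E_1, E_2)$ and $h_{\mathcal{O}(2)}$ is the essentially unique invariant Hermitian metric on $\mathcal{O}_{\mathbb{P}^1}(2)$ (its scalar ambiguity is absorbed into $h_2$).

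The second step is to write $F_H + [\theta_F, \theta_F^{\dagger,H}]$ as a $2\times 2$ block matrix. With $H$ block-diagonal, the off-diagonal entries of $\overline\partial_F$ (namely $p^*\psi\otimes q^*\alpha$) and of $\theta_F$ (namely $p^*\phi\otimes q^*\beta$) induce off-diagonal entries in $\partial_H$ and $\theta_F^{\dagger,H}$ via their adjoints. Expanding $F_H = \partial_H\overline\partial_F + \overline\partial_F\partial_H$ and the graded commutator block by block, the $(1,1)$-block acquires pulled-back curvature together with terms proportional to $\psi\circ\psi^*$ and $\phi^*\circ\phi$, while the $(2,2)$-block acquires $\phi\circ\phi^*$, $\psi^*\circ\psi$, and the curvature of $h_{\mathcal{O}(2)}$. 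Applying $\Lambda_{\Omega_\sigma} = \tfrac{2}{\sigma}\Lambda_{p^*\omega} + \Lambda_{q^*\omega_{\mathbb{P}^1}}$ contracts the $(1,1)$-forms on $\mathbb{P}^1$ (built from $\alpha\wedge\overline\alpha$, $\overline\beta\wedge\beta$, and $F_{h_{\mathcal{O}(2)}}$) into explicit scalars. After a uniform rescaling by $\sigma/2$, the two diagonal blocks of the Hermitian--Einstein equation on $F$ become exactly the two equations of the doubly-coupled $\tau$-vortex system, with $\tau$, $\tau'$, and the global Einstein constant $\lambda$ linked precisely through the definition (\ref{intro eq1}) of $\sigma$. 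The $(1,2)$- and $(2,1)$-blocks collect cross terms built from $\phi\circ\psi$ or $\psi\circ\phi$, and thus vanish by the defining condition $\phi\circ\psi = \psi\circ\phi = 0$ of a Higgs quadruplet.

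The main obstacle is the explicit bookkeeping in the block calculation: computing the adjoints with respect to the nontrivial metric on the second factor, tracking the scalar coefficients produced by $\Lambda_{q^*\omega_{\mathbb{P}^1}}$ applied to $\alpha\wedge\overline\alpha$, $\overline\beta\wedge\beta$, and $F_{h_{\mathcal{O}(2)}}$, and verifying that a single global Einstein constant on $X\times\mathbb{P}^1$ reproduces the two prescribed constants $\tau$ and $\tau'$ exactly via (\ref{intro eq1}). The verification that the off-diagonal blocks vanish also requires some care, as it combines the algebraic condition $\phi\circ\psi = \psi\circ\phi = 0$ with the precise shape of the $(1,1)$-forms on $\mathbb{P}^1$ that multiply these factors. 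Once these scalar identifications are in place, both directions of the proposition follow from the bijective correspondence $H \leftrightarrow (h_1, h_2)$ established in the first step.
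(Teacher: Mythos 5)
Your overall strategy is the same as the paper's: identify $SU(2)$-invariant hermitian metrics on $F$ with pairs $(h_1,h_2)$ (this is Proposition \ref{prop 4.1}, quoted from Garc\'ia-Prada; your Schur-type isotypic argument is an informal version of it, and can be made rigorous exactly because $\mathcal{O}_{\mathbb{P}^1}(\pm 2)$ admits no nonzero $SU(2)$-invariant sections, Proposition \ref{prop 2.1}), and then match the Hermitian--Einstein equation with the doubly-coupled vortex system by a block computation in which the Einstein constant $\lambda$ is tied to $\tau,\tau'$ through the degree computation of Lemma \ref{lem 4.1}. Up to that point the sketch is consistent with Proposition \ref{prop 4.7} combined with Proposition \ref{prop 5.1}.

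There is, however, a genuine error in your treatment of the off-diagonal blocks. The $(1,2)$- and $(2,1)$-entries of $F_H+[\theta_F,\theta_F^{\dagger}]$ contain no compositions $\phi\circ\psi$ or $\psi\circ\phi$ at all: since $\overline\partial_F$ involves only $\psi$ (through $q^\ast\alpha$) and $\theta_F$ involves only $\phi$ (through $q^\ast\beta$), those compositions occur only in $\overline\partial_F\theta_F+\theta_F\overline\partial_F$, i.e.\ in the integrability condition making $(F,\overline\partial_F,\theta_F)$ a Higgs bundle (Proposition \ref{prop 4.5}); that is where $\phi\circ\psi=\psi\circ\phi=0$ is actually used. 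The true off-diagonal entries of the Hermitian--Einstein operator are terms such as the $(1,0)$-covariant derivative of $p^\ast\psi\otimes q^\ast\alpha$ and $p^\ast\theta_1\wedge(p^\ast\phi\otimes q^\ast\beta)^\ast+(p^\ast\phi\otimes q^\ast\beta)^\ast\wedge p^\ast\theta_2$, and their vanishing after applying $\Lambda_{\Omega_\sigma}$ --- which is exactly what is needed for the implication ``vortex solution $\Rightarrow$ Hermitian--Einstein'' --- must be argued differently: after taking $(1,1)$-parts these are mixed forms with one leg along $X$ and one along $\mathbb{P}^1$ (the purely $\mathbb{P}^1$ contribution, e.g.\ the one involving $\partial_{h^{(-2)}}\alpha$, vanishes either by direct computation or because the contraction of such a block would be an $SU(2)$-invariant section of a line bundle of degree $\mp 2$, hence zero by Proposition \ref{prop 2.1}), and mixed $(1,1)$-forms are annihilated by contraction with the product K\"ahler form $\Omega_\sigma$. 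This is the final step of the proof of Proposition \ref{prop 4.7}; with your stated justification that step fails, so the direction from a solution of the doubly-coupled $\tau$-vortex equation to an invariant Hermitian--Einstein metric is not established as written, even though the remainder of your argument and the conclusion are correct.
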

 
 In Section \ref{sec 5.1}, we introduce the notion of $\sigma$-stability for Higgs quadruplets.
 This is a generalization of $\sigma$-stability for holomorphic triples defined in \cite{BG}.
 Assuming that $\tau$ and $\sigma$ are related as in equation (\ref{intro eq1}), we obtain:
 \begin{proposition}[Proposition \ref{prop 5.4}]\label{prop 1.2}
 The following conditions are equivalent.
 \begin{itemize}
\item Q is $\sigma$-stable.
\item $(F,\overline\partial_F,\theta_F)$ is stable with respect to the $SU(2)$-action and the K\"ahler form $\Omega_\sigma$.
\end{itemize}
  \end{proposition}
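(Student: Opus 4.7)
The plan is to set up a bijection between $SU(2)$-invariant Higgs subsheaves of $(F,\overline\partial_F,\theta_F)$ and Higgs sub-quadruplets of $Q$, and then to verify that the $\Omega_\sigma$-slope of an invariant Higgs subsheaf equals the $\sigma$-slope of the corresponding Higgs sub-quadruplet. Since equivariant stability of $F$ is tested against $SU(2)$-invariant Higgs subsheaves, this matching will immediately convert $\sigma$-stability of $Q$ into stability of $F$ with respect to the $SU(2)$-action, and conversely.

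The classification of $SU(2)$-invariant coherent subsheaves of $F=p^\ast E_1\oplus p^\ast E_2\otimes q^\ast\mathcal{O}_{\mathbb{P}^1}(2)$ is the one used in \cite{BG}: the two summands are pulled back from non-isomorphic $SU(2)$-equivariant factors on $\mathbb{P}^1$ with no nonzero equivariant $\mathrm{Hom}$ between them in either direction ($H^0(\mathbb{P}^1,\mathcal{O}_{\mathbb{P}^1}(2))$ is an irreducible nontrivial representation and $H^0(\mathbb{P}^1,\mathcal{O}_{\mathbb{P}^1}(-2))=0$), so any $SU(2)$-invariant coherent subsheaf splits as $F'=p^\ast F_1\oplus p^\ast F_2\otimes q^\ast\mathcal{O}_{\mathbb{P}^1}(2)$ for coherent subsheaves $F_i\subset E_i$. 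Imposing the Higgs invariance conditions on $F'$: the off-diagonal $p^\ast\psi\otimes q^\ast\alpha$ of $\overline\partial_F$ forces $\psi(F_2)\subset F_1$, the diagonal of $\theta_F$ forces $\theta_i(F_i)\subset F_i\otimes\Omega^1_X$, and the off-diagonal $p^\ast\phi\otimes q^\ast\beta$ of $\theta_F$ forces $\phi(F_1)\subset F_2$. These are precisely the defining conditions of a Higgs sub-quadruplet $Q'=((F_1,\theta_1|_{F_1}),(F_2,\theta_2|_{F_2}),\phi|_{F_1},\psi|_{F_2})$ of $Q$, giving the bijection.

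Next I would carry out the slope computation. Using $c_1(q^\ast\mathcal{O}_{\mathbb{P}^1}(2))=2\,q^\ast\omega_{\mathbb{P}^1}$, the decomposition $\Omega_\sigma=\tfrac{\sigma}{2}\,p^\ast\omega+q^\ast\omega_{\mathbb{P}^1}$, and the normalizations $\int_X\omega=\int_{\mathbb{P}^1}\omega_{\mathbb{P}^1}=1$, and observing that all terms of type $p^\ast(\cdot)\wedge p^\ast\omega$ vanish because $X$ is one-complex-dimensional, one obtains
\[
\deg_{\Omega_\sigma}(F')=\deg F_1+\deg F_2+\sigma\,\mathrm{rank}\,F_2,\qquad \mathrm{rank}\,F'=\mathrm{rank}\,F_1+\mathrm{rank}\,F_2.
\]
Dividing, $\mu_{\Omega_\sigma}(F')$ agrees with the $\sigma$-slope of $Q'$ as defined in Section \ref{sec 5.1}, so $\mu_{\Omega_\sigma}(F')<\mu_{\Omega_\sigma}(F)$ if and only if $\mu_\sigma(Q')<\mu_\sigma(Q)$. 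Quantifying over all proper invariant Higgs sub-quadruplets then yields the equivalence.

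The main obstacle I anticipate is making the classification of $SU(2)$-invariant coherent subsheaves precise beyond the locally-free case: the isotypical splitting is transparent for equivariant subbundles, but for a general coherent subsheaf one must argue that torsion pieces also split according to the $SU(2)$-representation content of the ambient sheaf, which typically requires an equivariant $\mathrm{Ext}^1$-vanishing argument or a reduction to the locally-free locus followed by extension by coherence. The remaining work is bookkeeping, in particular matching the precise normalization of $\mu_\sigma$ adopted in Section \ref{sec 5.1} with the degree formula above.
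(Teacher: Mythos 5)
Your strategy coincides with the paper's: a bijective correspondence between $SU(2)$-invariant sub-objects of $(F,\overline\partial_F,\theta_F)$ and sub-Higgs quadruplets of $Q$ (Proposition \ref{prop 5.3}), followed by the slope identity $\mu_\sigma(Q')=\mu(F')$ obtained from $\mathrm{deg}_{\Omega_\sigma}F'=\mathrm{deg}F'_1+\mathrm{deg}F'_2+\sigma\,\mathrm{rank}F'_2$ (Lemma \ref{lem 4.1}, Lemma \ref{lem 5.4}); quantifying over sub-objects then gives the equivalence exactly as you describe. The genuine difference is how the two arguments certify that only invariant subbundle-type objects need to be tested. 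You propose to classify all $SU(2)$-invariant coherent subsheaves of $F$ by the isotypic argument (no invariant morphisms between the factors $\mathcal{O}_{\mathbb{P}^1}$ and $\mathcal{O}_{\mathbb{P}^1}(\pm 2)$), and you correctly identify the torsion/non-locally-free case as the delicate point; the paper sidesteps precisely this: Higgs stability in Simpson's sense is tested only against \emph{saturated} invariant sub-Higgs sheaves, and Lemma \ref{lem 5.3} notes that on the complex surface $X\times\mathbb{P}^1$ a saturated subsheaf of a bundle is locally free, after which the bundle-level classification is quoted from \cite{BG} (Lemma \ref{lem 5.2}) and the Higgs-field constraint is extracted from the invariant $(1,0)$-form decomposition of Lemma \ref{lem 5.1}. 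So the equivariant $\mathrm{Ext}^1$ or torsion analysis you anticipate is not needed; what replaces it is the saturation remark, which you should incorporate. Two smaller points of bookkeeping: the decomposition $F'=p^\ast F_1\oplus p^\ast F_2\otimes q^\ast\mathcal{O}_{\mathbb{P}^1}(2)$ is a splitting of the underlying equivariant smooth structure only (holomorphically $F'$ is an extension whose off-diagonal datum is $p^\ast\psi'\otimes q^\ast\alpha$, which is what your condition $\psi(F_2)\subset F_1$ records), and since your $F_i\subset E_i$ are a priori only subsheaves on the curve $X$, you should pass to their saturations to obtain the sub-Higgs bundles required by the paper's definition of a sub-quadruplet, observing that saturation can only increase the degree, so the strict slope inequality used in the direction ``$Q$ $\sigma$-stable $\Rightarrow$ $F$ stable'' is preserved. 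With these two adjustments your argument is the paper's proof of Proposition \ref{prop 5.4} in all essentials.
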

 
 Then, combining Proposition \ref{prop 1.1} and \ref{prop 1.2}, we obtain the main result of this paper:
 \begin{theorem}[Theorem \ref{thm 5.1}, Proposition \ref{prop 5.5}]
 There exist hermitian metrics $h_1$ and $h_2$ on $E_1$ and $E_2$ such that $(Q,h_1,h_2)$ is a solution of the doubly-coupled $\tau$-vortex equation if and only if $Q$ is $\sigma$-polystable.
Moreover,  if $Q$ is $\sigma$-stable, then the metrics $h_1$ and $h_2$ are unique up to scaling by positive constants.
  \end{theorem}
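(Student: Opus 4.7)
My plan is to chain Propositions \ref{prop 1.1} and \ref{prop 1.2} with the Hitchin-Simpson Kobayashi-Hitchin correspondence for Higgs bundles on $(X\times\mathbb{P}^1,\Omega_\sigma)$. The logical skeleton is: a solution $(Q,h_1,h_2)$ of the doubly-coupled $\tau$-vortex equation $\leftrightarrow$ an $SU(2)$-invariant Hermitian-Einstein metric on $(F,\overline\partial_F,\theta_F)$ $\leftrightarrow$ $SU(2)$-equivariant polystability of $F$ $\leftrightarrow$ $\sigma$-polystability of $Q$, where the outer equivalences come from Proposition \ref{prop 1.1} and a polystable enhancement of Proposition \ref{prop 1.2}, and the middle one from the equivariant Hitchin-Simpson correspondence.

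For the forward direction, I would assume $(Q,h_1,h_2)$ solves the doubly-coupled $\tau$-vortex equation. Proposition \ref{prop 1.1} supplies an $SU(2)$-invariant Hermitian-Einstein metric on $(F,\overline\partial_F,\theta_F)$, so by Hitchin-Simpson $(F,\overline\partial_F,\theta_F)$ is polystable as a Higgs bundle; in particular the slope inequality holds against all $SU(2)$-invariant Higgs subsheaves, and the polystable version of Proposition \ref{prop 1.2} yields $\sigma$-polystability of $Q$. For the converse, starting from $\sigma$-polystable $Q$, the polystable form of Proposition \ref{prop 1.2} gives $SU(2)$-equivariant polystability of $F$, and the $SU(2)$-equivariant Kobayashi-Hitchin correspondence---obtained either by minimizing the Donaldson functional on the closed convex set of $SU(2)$-invariant metrics, or by averaging a Hitchin-Simpson HE metric using uniqueness up to positive scalars on each stable summand---produces an $SU(2)$-invariant HE metric on $F$. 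Applying Proposition \ref{prop 1.1} then extracts $h_1$ and $h_2$ solving the equation. For the uniqueness assertion under $\sigma$-stability, Proposition \ref{prop 1.2} upgrades to honest stability of $F$, so its HE metric is unique up to a positive real scalar; since the metric on $F$ is assembled explicitly from $h_1$, $h_2$, and the fixed Fubini-Study metric on $\mathcal{O}_{\mathbb{P}^1}(2)$, the pair $(h_1,h_2)$ inherits the same rigidity, giving uniqueness up to scaling by positive constants.

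The main obstacle I anticipate is the polystable/equivariant enhancement of Proposition \ref{prop 1.2}. Its stated form compares stability of $Q$ with $SU(2)$-invariant stability of $F$, but both directions of the proof require the polystable variant, and one must also ensure that ordinary polystability of $F$ (which is what Hitchin-Simpson returns from an HE metric) matches its $SU(2)$-equivariant polystability for bundles of this specific block form. Concretely, I would need to check that every $SU(2)$-invariant Higgs-holomorphic splitting of $F$ into polystable summands is compatible enough with the decomposition $F=p^*E_1\oplus p^*E_2\otimes q^*\mathcal{O}_{\mathbb{P}^1}(2)$ to descend to a $\sigma$-polystable decomposition of $Q$, and conversely that any $\sigma$-polystable decomposition of $Q$ lifts to one of $F$. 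Once this equivariant bookkeeping is handled, the theorem follows formally from the three chained correspondences.
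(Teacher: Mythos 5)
Your proposal follows essentially the same route as the paper: chain Proposition \ref{prop 5.1} (dimensional reduction, i.e. Proposition \ref{prop 1.1}), the stability correspondence of Proposition \ref{prop 5.4} (i.e. Proposition \ref{prop 1.2}), and Simpson's equivariant correspondence, Theorem \ref{thm 2.1}. Two remarks. First, the ``equivariant bookkeeping'' you flag as the main obstacle is precisely what the paper supplies rather than leaves formal: Proposition \ref{prop 5.3} gives a bijection between sub-Higgs quadruplets of $Q$ and $SU(2)$-invariant sub-Higgs bundles of $F$, and Lemma \ref{lem 5.4} matches $\mu_\sigma$ of a subquadruplet with the $\Omega_\sigma$-slope of the corresponding subbundle; in Proposition \ref{prop 5.5} the equivariantly polystable $F$ is decomposed into stable invariant summands, each summand descends via Proposition \ref{prop 5.3} to a subquadruplet which is $\sigma$-stable by Proposition \ref{prop 5.4}, and the slope equality gives $\sigma$-polystability of $Q$. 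So your planned check is exactly this descent argument and must be carried out (or cited) for the proof to be complete. Second, your alternative constructions of an invariant Hermitian-Einstein metric (Donaldson-functional minimization over invariant metrics, or averaging via uniqueness on stable summands) are unnecessary: Theorem \ref{thm 2.1} is already stated for a group action, giving a $G$-invariant Hermitian-Einstein metric directly from polystability with respect to the $G$-action, which also dissolves your worry about comparing ordinary and equivariant polystability of $F$. Note finally that the paper's Theorem \ref{thm 5.1} only proves the existence direction from $\sigma$-\emph{stability}; your plan to run the polystable case through an equivariantly polystable decomposition of $F$ is fine, but it again rests on the same Proposition \ref{prop 5.3}/Lemma \ref{lem 5.4} correspondence, this time to lift a $\sigma$-polystable decomposition of $Q$ to $F$. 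The uniqueness argument you give under $\sigma$-stability agrees with the paper's (uniqueness of the invariant Hermitian-Einstein metric up to a positive scalar, transported through the explicit product form of the metric on $F$).
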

 
 The moduli space of holomorphic triples has been extensively studied in \cite{G2, BG, BGP}.
 It has important applications in the geometry of the Hitchin moduli space.\par
 
While the moduli spaces of holomorphic triples are K\"ahler, the author expects the moduli space $M_\tau$ of solutions to the doubly-coupled $\tau$-vortex equation to carry a hyperK\"ahler structure.
A brief discussion of this expectation is given in Section \ref{sec 3.1}.
The author hopes to explore the geometry of $M_\tau$ in more detail in future work.
 
 \subsubsection*{Notations}
Let $M$ be a complex manifold and $E$ be a complex vector bundle over $M$. We denote the space of smooth sections as $A(E)$. We denote the dual bundle of $E$ as $E^\vee$.
 We denote the space of smooth $i$-forms which take value in $E$ as $A^i(E)$ and smooth $(p,q)$-forms as $A^{p,q}(E)$.
 
\subsubsection*{Acknowledgement}
This work was supported by JSPS KAKENHI Grant Number JP24KJ1611.

\section{Preliminary}\label{sec2}
In this section, we collect some background materials and results for this paper. The notation of this section is used throughout the paper.
\subsection{Higgs bundle}
In this section, we collect some results from \cite{S1}.\par
Let $(M,\omega_M)$ be a compact K\"ahler manifold with the K\"ahler form $\omega_M.$
 A Higgs bundle $(E,\overline\partial_E,\theta)$ over $M$ is a triple such that (1) $(E,\overline\partial_E)$ is a holomorphic bundle over $M$, (2) $\theta\in A^{1,0}(\mathrm{End}E), \overline\partial_E \theta=0$, and $\theta\wedge\theta=0$. 
The $\theta$ is called the Higgs field.\par
Let $h$ be a hermitian metric of $E$. We say a connection $D$ of $E$ is a metric connection if $dh(\cdot,\cdot)=h(D\cdot,\cdot)+h(\cdot,D\cdot)$ holds. 
There exists an unique unitary connection $\nabla_h$ such that the $(0,1)$-part of $\nabla$ satisfies $\nabla_h^{0,1}=\overline\partial_E$. 
The connection is called the \textit{Chern connection} and we denote the (1,0)-part of the Chern connection as  $\nabla_h$. 
Let $\theta^\dagger_h$ be the formal adjoint of $\theta$ with respect to $h$ (i.e. $h(\theta\cdot,\cdot)=h(\cdot,\theta^\dagger_h\cdot)$ holds).\par
Let $F_{h}$ be the curvature of $\nabla_h$, $\Lambda_\omega$ be the contraction with respect to $\omega$, and $[\theta,\theta^\dagger_h]=\theta\wedge \theta^\dagger_h+\theta^\dagger_h\wedge \theta$. 
 
We say that a hermitian metric $h$ is a \textit{Hermitian-Einstein metric} if 
\begin{equation*}
\Lambda_\omega(F_h+[\theta,\theta^\dagger_h])=\lambda Id_E.
\end{equation*}
holds for some constant $\lambda$.
 We say that a pair of a Higgs bundle and a hermitian metric $(E,\overline\partial_E,\theta,h)$ is \textit{Hermitian-Einstein} if $h$ is a Hermitian-Einstein metric for $(E,\overline\partial_E,\theta)$.\par
The existence of Hermitian-Einstein metrics on a Higgs bundle $(E,\overline\partial_E,\theta)$ is related to the stability of the Higgs bundle.\par
Let $c_1(E)$ be the first Chern class of $E$.
The degree of $E$ is defined as 
\begin{equation*}
\mathrm{deg}E:=\int_Mc_1(E)\wedge \omega^{\mathrm{dim}M-1}_M.
\end{equation*}
The slope $\mu(E)$ of $E$ is defined as a ratio
\begin{equation*}
\mu(E):=\frac{\mathrm{deg}E}{\mathrm{rank}E}.
\end{equation*}
We say that a Higgs bundle $(E,\overline\partial_E,\theta)$ is \textit{stable} if for every saturated sub Higgs sheaf $\mathcal{V}$
\begin{equation*}
\frac{\mathrm{deg}\mathcal{V}}{\mathrm{rank}\mathcal{V}}<\frac{\mathrm{deg}E}{\mathrm{rank}E}
\end{equation*}
holds. 

The degree of $\mathcal{V}$ can be defined by the Cheil-Weil formula (See \cite[Lemma 3.1]{S1}). 
We say that the Higgs bundle $(E,\overline\partial_E,\theta)$ is \textit{polystable}  if we have a decomposition $(E,\overline\partial_E,\theta)=\bigoplus_i(E_i,\overline\partial_{E_i},\theta_i)$ such that each $(E_i,\overline\partial_{E_i},\theta_i)$ is stable and $\mu(E)=\mu(E_i)$ holds for every $i$.\par
Simpson proved that a Higgs bundle admits Hermitian-Einstein metrics if and only if it is polystable. He also proved it with a group action, which is crucial for this work.
 We give a review of this.\par
Let $G$ be a group.
 We assume that $G$ acts on $M$ as holomorphic automorphisms preserving the metric $\omega$ and acting compatibly by automorphism $\alpha:E\to E$ which preserves the metric $h$ and acts on $\theta$ by homotheties.\par
We say that $(E,\overline\partial_E,\theta)$ is stable with respect to the $G$-action if $(E,\overline\partial_E,\theta)$ is stable but for every $G$-invariant proper saturated sub-Higgs sheaf $\mathcal{V}\subset E$.
 Polystability with respect to the $G$-action is defined analogously.
 Then, Simpson proved the following
\begin{theorem}[\cite{S1}]\label{thm 2.1}
Let $M$ be a compact K\"ahler manifold and $(E,\overline\partial_E,\theta)$ be a Higgs bundle over $M$.
 Let $G$ be a group that acts as above. 
 Then $(E,\overline\partial_E,\theta)$ admits $G$-invariant Hermitian-Einstein metrics if and only if it is polystable with respect to the $G$-action. 
 If the Higgs bundle is stable with respect to the $G$-action, then it is unique up to positive constants.
 \end{theorem}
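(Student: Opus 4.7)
The plan is to chain together Propositions 1.1, 1.2, and Simpson's Theorem 2.1 via the $SU(2)$-invariant Higgs bundle $(F, \overline\partial_F, \theta_F)$ on $X \times \mathbb{P}^1$. For the \emph{if and only if} part, I apply Theorem 2.1 to $(F, \overline\partial_F, \theta_F)$ equipped with the natural $SU(2)$-action and the K\"ahler form $\Omega_\sigma$: it produces an $SU(2)$-invariant Hermitian-Einstein metric if and only if $F$ is polystable with respect to the $SU(2)$-action. Proposition 1.1 translates the metric side into the existence of $(h_1,h_2)$ solving the doubly-coupled $\tau$-vortex equation, while a polystable extension of Proposition 1.2 translates the stability side into $\sigma$-polystability of $Q$. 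The theorem then follows by concatenating these three equivalences.

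The first substantive step is to upgrade Proposition 1.2 from $\sigma$-stability to $\sigma$-polystability. In one direction, a $\sigma$-polystable decomposition $Q = \bigoplus_i Q_i$ of $Q$ into $\sigma$-stable quadruplets of the same reduced slope induces, via functoriality of the $F$-construction, a Higgs bundle decomposition $F = \bigoplus_i F_i$ in which each $F_i$ is $SU(2)$-invariantly stable by Proposition 1.2 and has the same $\Omega_\sigma$-slope as $F$; this realizes $SU(2)$-invariant polystability of $F$. In the other direction, I start from an $SU(2)$-invariant polystable decomposition $F = \bigoplus_i F_i$ and use the $SU(2)$-weight-space structure of $F$ (weight $0$ on $p^\ast E_1$ and weight $2$ on $p^\ast E_2 \otimes q^\ast \mathcal{O}_{\mathbb{P}^1}(2)$) to conclude that each $SU(2)$-invariant summand $F_i$ splits compatibly along these weight components, so that $F_i$ itself arises from the $F$-construction applied to a subquadruplet $Q_i$; summing over $i$ recovers the $\sigma$-polystable decomposition of $Q$.

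For the uniqueness statement, I assume $Q$ is $\sigma$-stable. By Proposition 1.2, $(F, \overline\partial_F, \theta_F)$ is stable with respect to the $SU(2)$-action, and the second assertion of Theorem 2.1 then says that the $SU(2)$-invariant Hermitian-Einstein metric $H$ on $F$ is unique up to positive scalar multiplication. Tracing through the construction in Proposition 1.1, an $SU(2)$-invariant Hermitian metric on $F$ decomposes as $p^\ast h_1 \oplus (p^\ast h_2 \otimes q^\ast h_{\mathcal{O}(2)})$ for a fixed Fubini-Study metric $h_{\mathcal{O}(2)}$ on $\mathcal{O}_{\mathbb{P}^1}(2)$, and a global positive rescaling of $H$ corresponds exactly to a simultaneous positive rescaling of the pair $(h_1, h_2)$. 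This yields the claimed uniqueness.

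The main obstacle is the polystability extension: I must verify that an arbitrary $SU(2)$-invariant polystable decomposition of $F$ descends to a decomposition of the quadruplet $Q$, with $\phi$, $\psi$, $\theta_1$, and $\theta_2$ respecting the splitting. The $SU(2)$-weight-space argument handles the underlying smooth and holomorphic splitting, but one still needs to check that the off-diagonal holomorphic and Higgs-field components built from $\phi$, $\psi$, $\theta_1$, $\theta_2$ respect the induced splitting of $E_1$ and $E_2$; this is the step where the condition $\phi\circ\psi=\psi\circ\phi=0$ built into the definition of a Higgs quadruplet, together with the explicit block forms of $\overline\partial_F$ and $\theta_F$, should be used essentially.
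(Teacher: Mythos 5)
Your proposal does not prove the statement it was assigned. The statement is Theorem 2.1, which is Simpson's general Kobayashi--Hitchin correspondence for Higgs bundles with a group action: for an arbitrary compact K\"ahler manifold $M$, an arbitrary Higgs bundle $(E,\overline\partial_E,\theta)$, and an arbitrary group $G$ acting as described, the existence of a $G$-invariant Hermitian--Einstein metric is equivalent to polystability with respect to the $G$-action, with uniqueness up to positive constants in the stable case. This is an external result quoted from \cite{S1}; the paper gives no proof of it, and proving it would require Simpson's analytic machinery (the heat flow or continuity method for the Hermitian--Einstein equation on Higgs bundles, the Donaldson functional, and the equivariant refinement obtained by averaging over $G$). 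None of that appears in your proposal.

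What you have written is instead a proof sketch of the paper's \emph{main} theorem (the combination of Theorem 5.1 and Proposition 5.5), concerning the specific bundle $F = p^\ast E_1 \oplus p^\ast E_2 \otimes q^\ast\mathcal{O}_{\mathbb{P}^1}(2)$ over $X\times\mathbb{P}^1$, the $SU(2)$-action, and Higgs quadruplets. Worse, your argument explicitly \emph{invokes} Theorem 2.1 as its central ingredient (``I apply Theorem 2.1 to $(F,\overline\partial_F,\theta_F)$\dots''), so with respect to the assigned statement the reasoning is circular: you cannot establish a general theorem by applying it to one special case. Your discussion of the polystable upgrade of Proposition 1.2 and of the uniqueness of $(h_1,h_2)$ is broadly in the spirit of what the paper does in Proposition 5.5 and Proposition 5.1, but that is material for a different statement. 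For Theorem 2.1 itself the correct answer is simply that it is Simpson's theorem, taken as given; if a proof were demanded, it would have to reproduce the analysis of \cite{S1}, not the dimensional-reduction argument you have outlined.
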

 \begin{remark}
 In the paper \cite{S1}, stable with respect to the $G$-action is just stated as stable. In this paper, we say stable with respect to the $G$-action to emphasize the action of $G$.
  \end{remark}
  
\subsection{Hitchin Equation}
In this section, we review some results of \cite{Hit}.\par
Let $(X,\omega_X)$ be a compact Riemann surface.
 Let $E$ be a smooth complex vector bundle on $X$ and $h$ be a hermitian metric of $E$.\par
Let $\nabla_h$ be an unitary connection of $E$ with respect to $h$ and $\Phi$ be a $\mathrm{End}E$-valued skew-symmetric 1-form with respect to $h$ (i.e. $h(\Phi\cdot,\cdot)+h(\cdot,\Phi\cdot)=0$). 
Let $\ast$ be the Hodge star.
We say that $(\nabla_h,\Phi)$ is a solution of  the \textit{Hitchin equation} if
\begin{align*}
F_{h}-\Phi\wedge\Phi&=\lambda\omega_X\mathrm{Id}_E,\\
\nabla_h\Phi&=0,\\
\nabla_h\ast\Phi&=0.
\end{align*}

holds for some constant $\lambda$.

This equation arises from the two-dimensional reduction of instantons. The equation is deeply studied in \cite{Hit}. \par
Suppose a pair $(\nabla_h,\Phi)$ satisfies the Hitchin equation. 
Set $\overline\partial_E:=\nabla^{0,1}_h, \theta:=\sqrt{-1}\Phi^{1,0}$.
Then $(E,\overline\partial_E,\theta)$ is a Higgs bundle with a Hermitian-Einstein metric $h$.
Hence, the induced Higgs bundle $(E,\overline\partial_E,\theta)$ is polystable.\par
Conversely, let $(E,\overline\partial_E,\theta)$ be a polystable Higgs bundle over $X$.
Let $h$ be a Hermitian-Einstein metric.
Such a metric exists because of polystability.
Let $\nabla_h$ be the Chern connection with respect to $\overline\partial_E$ and $h$ and $\Phi:=-\sqrt{-1}\theta+\sqrt{-1}\theta^\dagger_h$.
 Then $(\nabla_h,\Phi)$ satisfies the Hitchin equation.
Hence, we can identify a pair of a metric connection and a skew-symmetric 1-form that is a solution of the Hitchin equation and a polystable Higgs bundle.
This correspondence makes the geometry of the moduli space of the Hitchin equation very rich. 
The moduli space is called the \textit{Hitchin moduli}.\par
We give a very brief review of the construction of the Hitchin moduli. 
Although there are several ways to construct this space, we review the hyperK\"ahelr reduction way.
We omit the details of the analytic aspects.
 \par
First we define 
\begin{align*}
A(\mathfrak{u}(E))&:=\bigg\{f\in A(\mathrm{End}E)\,|\, h(f\cdot,\cdot)+h(\cdot,f\cdot)=0\bigg\},\\
A^i(\mathfrak{u}(E))&:=\bigg\{\Phi\in A^i(\mathrm{End}E)\,|\, h(\Phi\cdot,\cdot)+h(\cdot,\Phi\cdot)=0\bigg\},\\
\mathcal{G}:=A(U(E))&:=\bigg\{g\in A(GL(E))\,|\, h(g\cdot,g\cdot)=h(\cdot,\cdot)\bigg\}.
\end{align*}
Let $\mathcal{A}_h$ be the space of unitary connections. 
This space is an affine space modeled on $A^1(\mathfrak{u}(E))$. $\mathcal{G}$ acts on $\mathcal{A}_h\times A^{1}(\mathfrak{u}(E))$ as 
\begin{equation*}
\begin{array}{rccc}
& \mathcal{G}\times \mathcal{A}_h\times A^{1}(\mathfrak{u}(E))  &\longrightarrow&  \mathcal{A}_h\times A^{1}(\mathfrak{u}(E))                     \\
        & \rotatebox{90}{$\in$}&               & \rotatebox{90}{$\in$} \\
        & (g,\nabla_h,\Phi)                    & \longmapsto   & (g^{-1}\nabla_hg,g^{-1}\Phi g).
\end{array}
\end{equation*}
Let $(\nabla_h,\Phi)\in\mathcal{A}_h\times A^{1}(\mathfrak{u}(E)).$ 
Then the tangent space of $\mathcal{A}_h\times A^{1}(\mathfrak{u}(E))$ at $(\nabla_h,\Phi)$ is 
\begin{equation*}
T_{(\nabla_h,\Phi)}(\mathcal{A}_h\times A^{1}(\mathfrak{u}(E)))=A^{1}(\mathfrak{u}(E))\times A^{1}(\mathfrak{u}(E)).
\end{equation*}
Let $(A_1,A_2), (B_1,B_2)\in T_{(\nabla_h,\Phi)}(\mathcal{A}_h\times A^{1}(\mathfrak{u}(E)))$. We define the symmetric form $g$ on $T_{(\nabla_h,\Phi)}(\mathcal{A}_h\times A^{1}(\mathfrak{u}(E)))$ as 
\begin{equation*}
g((A_1,A_2),(B_1,B_2)):=-\int_X\mathrm{Tr}(A_1\wedge\ast B_1+A_2\wedge\ast B_2).
\end{equation*}
We aslo define three $\mathbb{R}$-linear maps $I,J,K$ on $ T_{(\nabla_h,\Phi)}(\mathcal{A}_h\times A^{1}(\mathfrak{u}(E)))$ as 
\begin{align*}
I(A_1,A_2)&:=(\ast A_1,-\ast A_2)\\
J(A_1,A_2)&:=(-A_2,A_1)\\
K&:=IJ.
\end{align*}
It is easy to check that $I,J,K$ satisfy the quaternoic relation. 
In particular $(\mathcal{A}_h\times A^{1}(\mathfrak{u}(E)),g, I, J, K)$ is a hyperK\"ahler manifold. 
The Lie algebra of the Lie group $\mathcal{G}$ is $A(\mathfrak{u}(E))$. We regard $A^2(\mathfrak{u}(E))$ as the dual of $A(\mathfrak{u}(E))$.
Next, we define the a map
\begin{equation*}
\mu: \mathcal{A}_h\times A^{1}(\mathfrak{u}(E))\to A^{2}(\mathfrak{u}(E))^{\oplus 3}
\end{equation*} 
as
\begin{equation*}
\mu(\nabla_h,\Phi):=(F_h-\Phi\wedge\Phi, \nabla_h\Phi,\nabla_h\ast\Phi).
\end{equation*}
Then $\mu$ is actually a hyperK\"ahler moment map and the Hitchin moduli $\mathcal{M}_{\mathrm{Hit}}$ is the hyperK\"ahler quotient of this moment map i.e. $\mathcal{M}_{\mathrm{Hit}}=\mu^{-1}(\lambda\omega_X\mathrm{Id}_E,0,0)/\mathcal{G}$. 
The smooth part of this space is a finite dimension hyperK\"ahler manifold.

\subsection{$SU(2)$-action on $\mathbb{P}^1$ and $\mathcal{O}_{\mathbb{P}^1}(n)$}\label{sec 2.3}
In this section, we review the $SU(2)$-action on $\mathbb{P}^1$ and $\mathcal{O}_{\mathbb{P}^1}(n)$.\par
Let $M_2(\mathbb{C})$ be the set of $2\times 2$ complex valued matrixes.
 Recall that 
\begin{equation*}
SU(2)=\bigg\{
\begin{pmatrix}
a&-\overline{b}\\
b&\overline{a}
\end{pmatrix}\in M_2(\mathbb{C})
\,\,
\bigg|
\,\,
 |a|^2+|b|^2=1\bigg\}.
\end{equation*}
$SU(2)$ acts on $\mathbb{P}^1$ as 
\begin{equation*}
\begin{array}{rccc}
& SU(2)\times\mathbb{P}^1  &\longrightarrow& \mathbb{P}^1                     \\
        & \rotatebox{90}{$\in$}&               & \rotatebox{90}{$\in$} \\
        &\bigg( \begin{pmatrix}
a&-\overline{b}\\
b&\overline{a}
\end{pmatrix}   , [z_0:z_1]   \bigg)              & \longmapsto   &  [az_0-\overline{b}z_1:bz_0+\overline{a}z_1].
\end{array}
\end{equation*}
It is well known that this action is transitive and we can regard  $\mathbb{P}^1$ as a homogeneous space $SU(2)/U(1)$.\par
We now regard $\mathbb{P}^1$ as a two-copy of complex planes $\mathbb{C}_z$ and $\mathbb{C}_w$ glued with the relation $w=1/z$.
Then the action of $g= \begin{pmatrix}
a&-\overline{b}\\
b&\overline{a}
\end{pmatrix}$
on $\mathbb{P}^1$ can be realized as 
$z\mapsto\frac{az-\overline{b}}{bz+\overline{a}}$.\par
Let $\mathbb{C}_z\times\mathbb{C}\to\mathbb{C}_z$ and $\mathbb{C}_w\times\mathbb{C}\to\mathbb{C}_w$ be the trivial line bundle on $\mathbb{C}_z$ and $\mathbb{C}_w$.
Let $e_z$ and $e_w$ be the canonical global sections of them. 
The line bundle $\mathcal{O}_{\mathbb{P}^1}(-1)$ is obtained by gluing the two trivial line bundles by the relation $e_w=\frac{e_z}{z}$.
$\mathcal{O}_{\mathbb{P}^1}(-1)$ is called the taotological bundle.\par
It is a standard fact that $\mathcal{O}_{\mathbb{P}^1}(-1)$ is a sub-bundle of the rank 2 trivial bundle $\mathbb{P}^1\times \mathbb{C}^2\to\mathbb{P}^1$. 
The standard action of $SU(2)$ on $\mathbb{C}^2$ induces an $SU(2)$-action on $\mathcal{O}_{\mathbb{P}^1}(-1)$ which is compatible with the $SU(2)$-action on $\mathbb{P}^1$. 
We write down the action explicitly.  
For each $g= \begin{pmatrix}
a&-\overline{b}\\
b&\overline{a}
\end{pmatrix}$,
it induces a map $\Psi_g:\mathcal{O}_{\mathbb{P}^1}(-1)\to \mathcal{O}_{\mathbb{P}^1}(-1)$ such that 
\begin{equation*}
\Psi_g(e_z)=(bz+\overline{a})e_{g\cdot z}.
\end{equation*}
We define a hermitian metric $h^{(-1)}$ on $\mathcal{O}_{\mathbb{P}^1}(-1)$ such that 
\begin{equation*}
h^{(-1)}(e_z,e_z):=1+|z|^2, h^{(-1)}(e_w,e_w):=1+|w|^2.
\end{equation*}
This metric is $SU(2)$-invariant: $h^{(-1)}(\Psi_g\cdot,\Psi_g\cdot)=h^{(-1)}(\cdot,\cdot)$ holds for every for every $g\in SU(2)$.\par
We define $\mathcal{O}_{\mathbb{P}^1}(1)$ to be the dual bundle of $\mathcal{O}_{\mathbb{P}^1}(-1)$. 
For every $n\in\mathbb{Z}_{\geq 0}$, we define $\mathcal{O}_{\mathbb{P}^1}(n):=\mathcal{O}_{\mathbb{P}^1}(1)^{\otimes n}$ and $\mathcal{O}_{\mathbb{P}^1}(-n):=\mathcal{O}_{\mathbb{P}^1}(-1)^{\otimes n}$. 
Each $\mathcal{O}_{\mathbb{P}^1}(n)\, (n\in\mathbb{Z})$ has an $SU(2)$-action which is induced by the $SU(2)$-action on $\mathcal{O}_{\mathbb{P}^1}(-1)$.\par
The metric $h^{(-1)}$ defines a hermitian metric $h^{(n)}$ on $\mathcal{O}_{\mathbb{P}^1}(n)$. 
 The metric $h^{(n)}$ is also $SU(2)$-invariant.
 We note that $h^{(2)}$ is the Fubini-Study metric.\par
 For later use, we recall the $SU(2)$-invariant differential form valued on $\mathcal{O}_{\mathbb{P}^1}(-2)$ and $\mathcal{O}_{\mathbb{P}^1}(2)$.
  Before we proceed, we prepare some notations. 
  Let $T^{\ast(1,0)}\mathbb{P}^1$ be the holomorphic cotangent bundle on $\mathbb{P}^1$. Note that $\mathcal{O}_{\mathbb{P}^1}(-2)\simeq T^{\ast,(1,0)}\mathbb{P}^1$ as holomorphic bundles.
  Let $T^{\ast(0,1)}\mathbb{P}$ be the anti-holomorphic cotangent bundle.
  
  \begin{lemma}\label{lem 2.1}
  \begin{equation*}
  T^{\ast(0,1)}\mathbb{P}\simeq \mathcal{O}_{\mathbb{P}^1}(2)
    \end{equation*}
    as smooth bundles.
  \end{lemma}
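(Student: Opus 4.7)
The plan is to build the isomorphism from the Fubini-Study hermitian pairing, using identifications already introduced in the excerpt.

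First, the text records the holomorphic identification $\mathcal{O}_{\mathbb{P}^1}(-2)\simeq T^{\ast(1,0)}\mathbb{P}^1$. Dualizing gives a holomorphic (hence smooth) isomorphism $T^{(1,0)}\mathbb{P}^1\simeq \mathcal{O}_{\mathbb{P}^1}(2)$. Thus it suffices to produce a smooth isomorphism $T^{\ast(0,1)}\mathbb{P}^1\simeq T^{(1,0)}\mathbb{P}^1$.

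Second, I would use the Fubini-Study hermitian metric $h$ on $T^{(1,0)}\mathbb{P}^1$. Writing $T^{(0,1)}\mathbb{P}^1=\overline{T^{(1,0)}\mathbb{P}^1}$ (the two bundles are the $\mp i$-eigenspaces of the almost complex structure and are swapped by conjugation on $T\mathbb{P}^1\otimes\mathbb{C}$), the hermitian pairing $h$ becomes a smooth nondegenerate $\mathbb{C}$-bilinear map
\begin{equation*}
h\colon T^{(1,0)}\mathbb{P}^1\otimes_{\mathbb{C}} T^{(0,1)}\mathbb{P}^1\longrightarrow \mathbb{C}.
\end{equation*}
This is equivalent to a smooth $\mathbb{C}$-linear bundle map $v\mapsto h(v,\cdot)$ from $T^{(1,0)}\mathbb{P}^1$ to $(T^{(0,1)}\mathbb{P}^1)^{\vee}=T^{\ast(0,1)}\mathbb{P}^1$. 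Pointwise positive-definiteness of $h$ makes this map a fiberwise isomorphism, hence a smooth bundle isomorphism. Composing with the first step gives $T^{\ast(0,1)}\mathbb{P}^1\simeq T^{(1,0)}\mathbb{P}^1\simeq \mathcal{O}_{\mathbb{P}^1}(2)$.

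The only point that requires care is the bookkeeping between the $\mathbb{C}$-linear and $\mathbb{C}$-antilinear roles of the two arguments of a hermitian pairing, so that the resulting map really does land in $T^{\ast(0,1)}\mathbb{P}^1$ and not in $\overline{T^{\ast(1,0)}\mathbb{P}^1}$. I do not expect any deeper obstacle. As a consistency check, one can also verify the isomorphism at the level of first Chern classes: using $c_1(\overline{L})=-c_1(L)$ for a smooth complex line bundle $L$, one finds $c_1(T^{\ast(0,1)}\mathbb{P}^1)=-c_1(T^{(0,1)}\mathbb{P}^1)=c_1(T^{(1,0)}\mathbb{P}^1)=2=c_1(\mathcal{O}_{\mathbb{P}^1}(2))$, and smooth complex line bundles over $\mathbb{P}^1$ are classified by their first Chern number.
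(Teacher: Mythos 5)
Your argument is correct, but it takes a different route from the paper. The paper proves the lemma by a soft classification argument: it computes $\mathrm{deg}(T^{\ast(0,1)}\mathbb{P}^1)$ from the relation $F_{\overline{D}}=\overline{F_D}$ between the curvatures of a connection and its conjugate, obtains $\mathrm{deg}(T^{\ast(0,1)}\mathbb{P}^1)=-\,\mathrm{deg}(T^{\ast(1,0)}\mathbb{P}^1)=2$, and then invokes the fact that a smooth complex line bundle on $\mathbb{P}^1$ is determined up to isomorphism by its degree; your closing ``consistency check'' via $c_1$ is exactly this argument in Chern-class language. Your main proof instead produces the isomorphism explicitly: the hermitian pairing converts into a nondegenerate $\mathbb{C}$-bilinear pairing $T^{(1,0)}\mathbb{P}^1\otimes_{\mathbb{C}}T^{(0,1)}\mathbb{P}^1\to\mathbb{C}$ (linearity in the second slot holds because conjugation $w\mapsto\overline{w}$ is antilinear, cancelling the antilinearity of $h$), giving a fiberwise isomorphism $T^{(1,0)}\mathbb{P}^1\simeq(T^{(0,1)}\mathbb{P}^1)^{\vee}=T^{\ast(0,1)}\mathbb{P}^1$, which you then compose with the holomorphic identification $T^{(1,0)}\mathbb{P}^1\simeq\mathcal{O}_{\mathbb{P}^1}(2)$ dual to $T^{\ast(1,0)}\mathbb{P}^1\simeq\mathcal{O}_{\mathbb{P}^1}(-2)$; this is the general fact that a hermitian metric identifies the conjugate of a bundle with its dual. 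What each buys: the paper's argument is shorter and needs only the degree classification, while yours is constructive and, since the Fubini-Study metric is $SU(2)$-invariant, the isomorphism you build is automatically $SU(2)$-equivariant, which fits well with the invariant-section considerations of Proposition \ref{prop 2.1}.
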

  \begin{proof}
  Although this is well known, we give a proof for convenience.\par
 Let $D$ be a connection of $T^{\ast(1,0)}\mathbb{P}^1$ and $F_D$ be the curvature of $D$. Then
\begin{equation*}
\mathrm{deg}(T^{\ast(1,0)}\mathbb{P}^1)=\frac{\sqrt{-1}}{2\pi}\int_{\mathbb{P}^1}\mathrm{Tr}\Lambda F_D.
\end{equation*}
This is independent of $D$ (See \cite{W}, for example). 
Since $T^{\ast(0,1)}\mathbb{P}$ is the complex conjugate of $T^{\ast(1,0)}\mathbb{P}^1$,
the connection $D$ induces a connection $\overline{D}$ on $T^{\ast(0,1)}\mathbb{P}$. 
The curvatures of $D$ and $\overline{D}$ are related as $F_{\overline{D}}=\overline{F_D}$.
 Hence
 \begin{align*}
 \mathrm{deg}(T^{\ast(0,1)}\mathbb{P})&=\frac{\sqrt{-1}}{2\pi}\int_{\mathbb{P}^1}\mathrm{Tr}\Lambda F_{\overline{D}}\\
 &=\frac{\sqrt{-1}}{2\pi}\int_{\mathbb{P}^1}\mathrm{Tr}\Lambda\overline{F_D}\\
 &=-\overline{\frac{\sqrt{-1}}{2\pi}\int_{\mathbb{P}^1}\mathrm{Tr}\Lambda F_D}\\
 &=-\overline{\mathrm{deg}(T^{\ast(1,0)}\mathbb{P}^1)}.
     \end{align*}
Since $\mathrm{deg}(\mathcal{O}_{\mathbb{P}^1}(-1))=-1$ and $\mathcal{O}_{\mathbb{P}^1}(-2)\simeq T^{\ast(1,0)}\mathbb{P}^1$, $\mathrm{deg}(T^{\ast(1,0)}\mathbb{P}^1)=-2$ and hence $\mathrm{deg}(T^{\ast(0,1)}\mathbb{P})=2$.
Recall that a smooth complex line bundle over $\mathbb{P}^1$ is uniquely determined by its degree. 
Since $\mathrm{deg}(\mathcal{O}_{\mathbb{P}^1}(2))=2$, $T^{\ast(0,1)}\mathbb{P}\simeq \mathcal{O}_{\mathbb{P}^1}(2)$ as smooth bundles.
  \end{proof}
We denote the space of smooth $SU(2)$-invariant section of $\mathcal{O}_{\mathbb{P}^1}(n)$ as $A(\mathcal{O}_{\mathbb{P}^1}(n))^{SU(2)}$, smooth $SU(2)$-invariant $\mathcal{O}_{\mathbb{P}^1}(n)$-valued 1-form as $A^1(\mathcal{O}_{\mathbb{P}^1}(n))^{SU(2)}$.

\begin{proposition}\label{prop 2.1}
(1) For $n\neq 0$, $A(\mathcal{O}_{\mathbb{P}^1}(n))^{SU(2)}$ only contains zero section. 
\par
(2) $\mathrm{dim}A^1(\mathcal{O}_{\mathbb{P}^1}(-2))^{SU(2)}=\mathrm{dim}A^1(\mathcal{O}_{\mathbb{P}^1}(2))^{SU(2)}=1$. 
In particular, let $\alpha$ and $\beta$ be the basis of $A^1(\mathcal{O}_{\mathbb{P}^1}(-2))^{SU(2)}$ and $A^1(\mathcal{O}_{\mathbb{P}^1}(2))^{SU(2)}$. 
Then $\alpha\in A^{0,1}(\mathcal{O}_{\mathbb{P}^1}(-2))$ and $\beta\in A^{1,0}(\mathcal{O}_{\mathbb{P}^1}(2)).$
\end{proposition}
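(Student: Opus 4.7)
The plan is to exploit the homogeneous-space description $\mathbb{P}^1 = SU(2)/U(1)$, with isotropy $U(1) = \{\mathrm{diag}(a,\overline a): |a|=1\}$ fixing $z=0$. For any $SU(2)$-equivariant smooth complex line bundle $V\to\mathbb{P}^1$, evaluation at $z=0$ identifies $A(V)^{SU(2)}$ with the $U(1)$-fixed subspace $(V|_{z=0})^{U(1)}$: an invariant smooth section on a homogeneous space is uniquely determined by (and recoverable from) its stabilizer-invariant value at a single basepoint via $SU(2)$-translation.

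For (1) I would compute this weight explicitly. Setting $b=0$ and $z=0$ in $\Psi_g(e_z)=(bz+\overline a)e_{g\cdot z}$ yields $\Psi_g(e_0)=\overline a\, e_0$, so $U(1)$ acts on $\mathcal{O}_{\mathbb{P}^1}(-1)|_{z=0}$ with weight $-1$; dualizing and taking tensor powers then shows that $U(1)$ acts on $\mathcal{O}_{\mathbb{P}^1}(n)|_{z=0}$ with weight $n$. A one-dimensional representation of nonzero weight has no nonzero invariants, so $A(\mathcal{O}_{\mathbb{P}^1}(n))^{SU(2)}=0$ whenever $n\neq 0$; the special case $n=0$ (needed for (2)) gives $A(\mathcal{O}_{\mathbb{P}^1})^{SU(2)}=\mathbb{C}$, consisting of the constants.

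For (2) I would split $A^1(\mathcal{O}_{\mathbb{P}^1}(n))=A^{1,0}(\mathcal{O}_{\mathbb{P}^1}(n))\oplus A^{0,1}(\mathcal{O}_{\mathbb{P}^1}(n))$ and use the $SU(2)$-equivariant smooth isomorphisms $T^{\ast(1,0)}\mathbb{P}^1\cong \mathcal{O}_{\mathbb{P}^1}(-2)$ (which is holomorphic) and $T^{\ast(0,1)}\mathbb{P}^1\cong \mathcal{O}_{\mathbb{P}^1}(2)$ (Lemma \ref{lem 2.1}) to produce $SU(2)$-equivariant identifications
\[
A^{1,0}(\mathcal{O}_{\mathbb{P}^1}(n))^{SU(2)} \cong A(\mathcal{O}_{\mathbb{P}^1}(n-2))^{SU(2)},\quad A^{0,1}(\mathcal{O}_{\mathbb{P}^1}(n))^{SU(2)} \cong A(\mathcal{O}_{\mathbb{P}^1}(n+2))^{SU(2)}.
\]
Specializing to $n=\pm2$ and plugging in (1): for $n=-2$ only the $(0,1)$-summand survives, with $A^{0,1}(\mathcal{O}_{\mathbb{P}^1}(-2))^{SU(2)}\cong A(\mathcal{O}_{\mathbb{P}^1})^{SU(2)}=\mathbb{C}$; for $n=2$ only the $(1,0)$-summand survives, again one-dimensional. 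This simultaneously yields the dimension count and the asserted types of $\alpha$ and $\beta$.

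The main obstacle I anticipate is that Lemma \ref{lem 2.1} only produced a smooth isomorphism via a degree comparison, whereas the argument above requires an $SU(2)$-equivariant one. I would resolve this using the $SU(2)$-invariant hermitian metric $h^{(-2)}$, which provides an $SU(2)$-equivariant conjugate-linear identification $\overline{\mathcal{O}_{\mathbb{P}^1}(-2)}\cong \mathcal{O}_{\mathbb{P}^1}(2)$ via $\overline v\mapsto h^{(-2)}(v,\cdot)$; composing with the tautological $T^{\ast(0,1)}\mathbb{P}^1=\overline{T^{\ast(1,0)}\mathbb{P}^1}$ and the holomorphic (hence $SU(2)$-equivariant) iso $T^{\ast(1,0)}\mathbb{P}^1\cong \mathcal{O}_{\mathbb{P}^1}(-2)$ upgrades Lemma \ref{lem 2.1} to the equivariant isomorphism needed in step (2).
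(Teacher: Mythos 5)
Your argument is correct, and it diverges from the paper's proof in two places worth noting. For (1) the paper argues globally: an $SU(2)$-invariant section is nowhere zero by transitivity, hence would trivialize $\mathcal{O}_{\mathbb{P}^1}(n)$, contradicting $\deg=n\neq 0$; you instead argue pointwise via the isotropy representation, computing that $U(1)$ acts on $\mathcal{O}_{\mathbb{P}^1}(n)|_{z=0}$ with weight $n$. Both are short and valid, and your weight computation has the small bonus of giving $A(\mathcal{O}_{\mathbb{P}^1})^{SU(2)}=\mathbb{C}$ in the same breath, where the paper invokes transitivity for $C^{\infty}(\mathbb{P}^1)^{SU(2)}$. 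For (2) your route is essentially the paper's: the same $(1,0)\oplus(0,1)$ splitting and reduction to (1). The genuine difference is that you make explicit the point the paper glosses over, namely that the identifications $A^{1,0}(\mathcal{O}(n))\cong A(\mathcal{O}(n-2))$ and $A^{0,1}(\mathcal{O}(n))\cong A(\mathcal{O}(n+2))$ must be $SU(2)$-equivariant before one may take invariants; Lemma \ref{lem 2.1} only gives a smooth isomorphism, and your fix via the invariant metric $h^{(-2)}$ (giving an equivariant conjugate-linear $\overline{\mathcal{O}(-2)}\cong\mathcal{O}(2)$) is the right one. The paper instead resolves this implicitly by exhibiting the explicit invariant forms $\alpha$ and $\beta$ at the end, which simultaneously proves existence and is needed later in the paper (non-degeneracy of $\alpha\wedge\beta$ in Propositions \ref{prop 4.5} and \ref{prop 4.7}), so your proof, while sufficient for the statement, yields less for later use. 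One caveat: your parenthetical ``holomorphic (hence $SU(2)$-equivariant)'' for $T^{\ast(1,0)}\mathbb{P}^1\cong\mathcal{O}_{\mathbb{P}^1}(-2)$ is true but not automatic from holomorphy alone; it needs the observation that a holomorphic automorphism of a line bundle over $\mathbb{P}^1$ covering the identity is a nonzero constant, so the discrepancy between the two equivariant structures is a continuous character $SU(2)\to\mathbb{C}^{\times}$, which is trivial. Spelling out that one line (or checking equivariance of the explicit isomorphism $dz\mapsto -e_{-2,z}$, $dw\mapsto e_{-2,w}$) would close the only thin spot in your write-up.
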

\begin{proof}
This is also well known, but we give a proof for convenience.\par
(1) Suppose we have a non-zero section $a\in A(\mathcal{O}_{\mathbb{P}^1}(n))^{SU(2)}$. 
Since $a$ is $SU(2)$-invariant and $SU(2)$ acts transitive on $\mathbb{P}^1$, $a$ is everywhere non-zero.
However, this is a contradiction because if such $a$ exists, then $a$ trivialize $\mathcal{O}_{\mathbb{P}^1}(n)$ but this contradicts to $\mathrm{deg}(\mathcal{O}_{\mathbb{P}^1}(n))=n$.\par
(2) First, we have 
\begin{equation}\label{prop 2.1 eq 1}
\begin{split}
A^1(\mathcal{O}_{\mathbb{P}^1}(-2))&=A^{1,0}(\mathcal{O}_{\mathbb{P}^1}(-2))\oplus A^{0,1}(\mathcal{O}_{\mathbb{P}^1}(-2))\\
&=A(T^{\ast(1,0)}\mathbb{P}^1\otimes\mathcal{O}_{\mathbb{P}^1}(-2))\oplus A(T^{\ast(0,1)}\mathbb{P}\otimes \mathcal{O}_{\mathbb{P}^1}(-2))\\
&=A(\mathcal{O}_{\mathbb{P}^1}(-2)\otimes\mathcal{O}_{\mathbb{P}^1}(-2))\oplus A(\mathcal{O}_{\mathbb{P}^1}(2)\otimes \mathcal{O}_{\mathbb{P}^1}(-2))\\
&=A(\mathcal{O}_{\mathbb{P}^1}(-4))\oplus C^{\infty}(\mathbb{P}^1).
\end{split}
\end{equation}
Here $C^{\infty}(\mathbb{P}^1)$ is the space of the complex valued smooth functions on $\mathbb{P}^1$.
Let $C^{\infty}(\mathbb{P}^1)^{SU(2)}$ be the space of $SU(2)$-invariant smooth functions. 
Then
\begin{align*}
A^1(\mathcal{O}_{\mathbb{P}^1}(-2))^{SU(2)}&=A(\mathcal{O}_{\mathbb{P}^1}(-4))^{SU(2)}\oplus C^{\infty}(\mathbb{P}^1)^{SU(2)}\\
&=C^{\infty}(\mathbb{P}^1)^{SU(2)}.
\end{align*}
The second equation follows from $(1)$.
Since $SU(2)$ acts transitive on $\mathbb{P}^1$, $C^{\infty}(\mathbb{P}^1)^{SU(2)}=\mathbb{C}$ and therefore $\mathrm{dim} A^1(\mathcal{O}_{\mathbb{P}^1}(-2))^{SU(2)}=1$. 
By using a similar argument, we can also prove $A^1(\mathcal{O}_{\mathbb{P}^1}(-2))^{SU(2)}=C^{\infty}(\mathbb{P}^1)^{SU(2)}$ and hence $\mathrm{dim} A^1(\mathcal{O}_{\mathbb{P}^1}(2))^{SU(2)}=1$.
By the equation (\ref{prop 2.1 eq 1}), we have 
\begin{equation*}
A^1(\mathcal{O}_{\mathbb{P}^1}(-2))^{SU(2)}=A(T^{\ast(0,1)}\mathbb{P}\otimes \mathcal{O}_{\mathbb{P}^1}(-2))^{SU(2)}=A^{0,1}(\mathcal{O}_{\mathbb{P}^1}(-2))^{SU(2)}.
\end{equation*}
We also have
\begin{equation*}
A^1(\mathcal{O}_{\mathbb{P}^1}(2))^{SU(2)}=A(T^{\ast(1,0)}\mathbb{P}\otimes \mathcal{O}_{\mathbb{P}^1}(2))^{SU(2)}=A^{1,0}(\mathcal{O}_{\mathbb{P}^1}(2))^{SU(2)}.
\end{equation*}
Hence, we proved (2).\par
We give explicit realizations $\alpha$ and $\beta$. 
Let $\mathbb{C}_z\times\mathbb{C}\to\mathbb{C}_z$ and $\mathbb{C}_w\times\mathbb{C}\to\mathbb{C}_w$ be the trivial line bundle on $\mathbb{C}_z$ and $\mathbb{C}_w$.
Let $e_{n,z}$ and $e_{n_w}$ be the canonical global section.
$\mathcal{O}_{\mathbb{P}^1}(n)$ can be constructed by gluing the two trivial line bundles by $e_{n,w}=z^ne_{n,z}$. 
$\alpha$ and $\beta$ are realized as follows:
\begin{align*}
&\alpha|_{\mathbb{C}_z}=\frac{1}{(1+|z|^2)^2}d\overline{z}\otimes e_{-2,z},\,\,\alpha|_{\mathbb{C}_w}=-\frac{1}{(1+|w|^2)^2}d\overline{w}\otimes e_{-2,w}.\\
&\beta|_{\mathbb{C}_z}=dz\otimes e_{2,z},\,\,\beta|_{\mathbb{C}_w}=-dw\otimes e_{2,w}.
\end{align*}
Then $\alpha$ and $\beta$ are indeed bases of $A^1(\mathcal{O}_{\mathbb{P}^1}(-2))^{SU(2)}$ and $A^1(\mathcal{O}_{\mathbb{P}^1}(2))^{SU(2)}$. This follows from $\alpha$ and $\beta$ are everywhere non-zero and $SU(2)$-invariant.
\end{proof}

\section{The Doubly-Coupled $\tau$-Vortex Equation}\label{sec 3}
We are now able to introduce the equation which we call the \textit{doubly-coupled $\tau$-vortex equation}. 
We freely use the notation we introduced in  Section \ref{sec2}.\par
Let $(X,\omega)$ be a compact Riemann surface. We will fix $(X,\omega)$ throughout the section. We also normalize $\omega$ so that $\int_X\omega=1$.\par
 Let $E_1$ and $E_2$ be smooth complex vector bundles of $\mathrm{rank}E_i=r_i (i=1,2)$ over $X$.
 Let $h_1$ and $h_2$ be hermitian metrics on $E_1$ and $E_2$.
 Recall that $\mathcal{A}_{h_i}$ is the space of unitary connections on $(E_i,h_i)$ and $A^1(\mathfrak{u}(E_i))$ is the space of skew-symmetric $\mathrm{End}E_i$-valued 1-forms of $(E_i,h_i)$.
For $f\in A(\mathrm{Hom}(E_1,E_2))$ we denote the adjiont  of $f$ with respect to the metrics $h_1,h_2$ as $f^\ast\in A(\mathrm{Hom}(E_2,E_1))$: $f^\ast$ is the unique section that satsifies $h_2(f_1\cdot,\cdot)=h_1(\cdot,f^\ast\cdot)$.\par
We set 
\begin{equation*}
M:=\mathcal{A}_{h_1}\times A^1(\mathfrak{u}(E_1))\times\mathcal{A}_{h_2}\times A^1(\mathfrak{u}(E_2))\times  A(\mathrm{Hom}(E_1,E_2))\times A(\mathrm{Hom}(E_2,E_1)).
\end{equation*}
To connect with the holomorphic object, we define a subset $N\subset M$ as 
\begin{align*}
N=
\left\{
(\nabla_{h_1}, \Phi_1, \nabla_{h_2}, \Phi_2, \phi, \psi) \in M \ \middle| \
\begin{aligned}
&1.\quad (\nabla^{0,1}_i + \sqrt{-1}\Phi^{1,0}_i)^2 = 0 \quad (i = 1,2), \\
&2.\quad \phi \circ (\nabla^{0,1}_1 + \Phi^{1,0}_1) = (\nabla^{0,1}_2 + \Phi^{1,0}_2) \circ \phi, \\
&3.\quad (\nabla^{0,1}_1 + \Phi^{1,0}_1) \circ \psi = \psi \circ (\nabla^{0,1}_2 + \Phi^{1,0}_2), \\
&4.\quad \phi \circ \psi = \psi \circ \phi = 0
\end{aligned}
\right\}
\end{align*}

Let $(\nabla_{h_1},\Phi_1,\nabla_{h_2},\Phi_2, \phi,\psi)\in N$. 
It is clear from the definition that both $(\nabla_{h_i},\Phi_i)$ define Higgs bundles $(E_i,\nabla^{0,1}_{h_i},\sqrt{-1}\Phi^{1,0}_i)$ over $X$.
It is also clear from the definition that $\phi$ and $\psi$ define Higgs bundle morphisms $\phi:(E_1,\nabla^{0,1}_{h_1},\sqrt{-1}\Phi^{1,0}_1)\to(E_2,\nabla^{0,1}_{h_2},\sqrt{-1}\Phi^{1,0}_2)$ and $\psi:(E_1,\nabla^{0,1}_{h_2},\sqrt{-1}\Phi^{1,0}_2)\to(E_1,\nabla^{0,1}_{h_1},\sqrt{-1}\Phi^{1,0}_1)$.
The condition $\phi\circ\psi=\psi\circ\phi=0$ naturally arises from the dimensional reduction which we study in Section \ref{sec 3}. 
\par
The doubly-coupled $\tau$-vortex equation is an equation defined for elements of $N$.  

 \begin{definition}
 We say that an element 
 \begin{equation*}
(\nabla_{h_1},\Phi_1,\nabla_{h_2},\Phi_2, \phi,\psi)\in N.
\end{equation*}
is a solution of  the doubly-coupled $\tau$-vortex equation if the following equations hold:
\begin{equation}\left \{\,
\begin{split}
&\Lambda (F_{h_1}-\Phi_1\wedge\Phi_1)-\sqrt{-1}\phi^\ast\circ \phi-\sqrt{-1}\psi\circ \psi^\ast+2\pi\sqrt{-1}\tau\mathrm{Id}_{E_1}=0,\\
&\Lambda (F_{h_2}-\Phi_2\wedge\Phi_2)+\sqrt{-1}\phi\circ \phi^\ast+\sqrt{-1}\psi^\ast\circ \psi+2\pi\sqrt{-1}\tau'\mathrm{Id}_{E_2}=0
\end{split}
\right.
\end{equation}
 \end{definition}
 
Note that if we set $\Phi_1=\Phi_2=0$ and $\phi=0$, then we recover the coupled vortex equation, which is deeply studied by the series of work \cite{BG, G1, G2}.\par
Following page 11 of \cite{G2}, $\tau$ and $\tau'$ cannot be independent.
Since the trace of $\Phi_1\wedge\Phi_1,\Phi_2\wedge\Phi_2,\phi\circ\phi^\ast+\phi^\ast\circ\phi$, and $\psi\circ\psi^\ast=\psi^\ast\circ\psi$ all vanish, we have 

 \begin{equation}
\mathrm{tr}\Lambda (F_{h_1})+\mathrm{tr}\Lambda (F_{h_2})+2\pi\sqrt{-1}\tau\cdot r_1-2\pi\sqrt{-1}\tau'\cdot r_2=0.
  \end{equation}
  
  Therefore  

\begin{equation}
\tau'=-\frac{1}{r_2}\cdot(r_1\cdot\tau-\mathrm{deg}E_1-\mathrm{deg}E_2)
\end{equation}
holds.\par

As we explained in the introduction, we named the equation the doubly-coupled $\tau$-vortex equation to reflect its structure, where two Higgs bundles are coupled via two morphisms in both directions.
This terminology generalizes the coupled vortex equation introduced in \cite{G2} and studied further in \cite{BG, BGP}, which involved a single pair of bundles coupled by a single morphism.
 
 \subsection{Moment Map Picture}\label{sec 3.1}
We derive the doubly-coupled vortex equation as moment map equations.
 We ignore the analytic aspects, such as Sobolev spaces, and only give a formal picture.\par
We let the Lie group 
\begin{equation*}
\mathcal{G}_1\times \mathcal{G}_2:=A(U(E_1))\times A(U_2(E))
\end{equation*} 
acts on 
\begin{equation*}
M=\mathcal{A}_{h_1}\times A^1(\mathfrak{u}(E_1))\times\mathcal{A}_{h_2}\times A^1(\mathfrak{u}(E_2))\times  A(\mathrm{Hom}(E_1,E_2))\times A(\mathrm{Hom}(E_2,E_1)).
\end{equation*}
as 

\begin{equation*}
\begin{array}{rccc}
& \mathcal{G}_1\times \mathcal{G}_2\times M &\longrightarrow& M                   \\
        & \rotatebox{90}{$\in$}&               & \rotatebox{90}{$\in$} \\
        & (g_1,g_2,\nabla_{h_1},\Phi_1,\nabla_{h_2},\Phi_2,\phi,\psi)                    & \longmapsto   & (g_1\nabla_{h_1}g^{-1}_1,g_1\Phi_1 g^{-1}_1,g_2\nabla_{h_2}g^{-1}_2,g_2\Phi_2 g^{-1}_2, g_2\phi g^{-1}_1, g_1\psi g_2^{-1}).
\end{array}
\end{equation*}

We now define symplectic structers on $M$.
Let $x\in M$.
Let $T_xM$ be the tangent space of $M$ at $x$. Since $M$ is an affine space, we have 
\begin{align*}
T_xM=A^1(\mathfrak{u}(E_1))\times A^1(\mathfrak{u}(E_1))\times A^1(\mathfrak{u}(E_2))\times A^1(\mathfrak{u}(E_2))\times  A(\mathrm{Hom}(E_1,E_2))\times A(\mathrm{Hom}(E_2,E_1)).
\end{align*}

 Let 
 \begin{equation*}
 a=(\dot{A}_1,\dot{\Phi}_1,\dot{B}_1,\dot{\Psi}_1,f_1,g_1), \,\,b=(\dot{A}_2,\dot{\Phi}_2,\dot{B}_2,\dot{\Psi}_2,f_2,g_2)\in T_xM.
 \end{equation*}
Then we define a non-degenerate bilinear map $g_x:T_xM\times T_xM\to\mathbb{R}$ as 
\begin{align*}
g_x(a,b)=&g\bigg((\dot{A}_1,\dot{\Phi}_1,\dot{B}_1,\dot{\Psi}_1,f_1,g_1), (\dot{A}_2,\dot{\Phi}_2,\dot{B}_2,\dot{\Psi}_2,f_2,g_2)\bigg)\\
:=&-\int_X\mathrm{Tr}(A_1\wedge\ast A_2+\Phi_1\wedge\ast \Phi_2+B_1\wedge\ast B_2+\Psi_1\wedge \ast\Psi_2)\\
&+\mathrm{Re}\bigg(\int_X(f_1,f_2)_{h^\vee_1\otimes h_2}+(g_1,g_2)_{h_1\otimes h^\vee_2}\bigg)
\end{align*}
Here $\mathrm{Re}\bigg(\int_X(f_1,f_2)_{h^\vee_1\otimes h_2}+(g_1,g_2)_{h_1\otimes h^\vee_2}\bigg)$ is the real part of $\int_X(f_1,f_2)_{h^\vee_1\otimes h_2}+(g_1,g_2)_{h_1\otimes h^\vee_2}$.\par

We next define a $\mathbb{R}$-linear morphisms $I:T_xM\to T_x M.$ 
We note that we now regard $A(\mathrm{Hom}(E_1,E_2))$ and $A(\mathrm{Hom}(E_2,E_1))$ as $\mathbb{R}$-vector spaces.
\begin{align*}
I(a)&=I((\dot{A}_1,\dot{\Phi}_1,\dot{B}_1,\dot{\Psi}_1,f_1,g_1)):=(\ast \dot{A}_1,-\ast\dot{\Phi}_1,\ast \dot{B}_1,-\ast\dot{\Psi}_1, \sqrt{-1}f_1,\sqrt{-1}g_1).
\end{align*}
It is straightforward to check that $I^2=-1$.
Therefore, $(T_xM, I)$ is an almost complex vector space.
 $I$ defines a sympletic structure $\omega_I$ on $T_xM$ such that 
 \begin{align*}
\omega_I(a,b):=&g(Ia,b)\\
=&\int_X\mathrm{Tr}(-A_1\wedge\ast A_2+\Phi_1\wedge\ast \Phi_2-B_1\wedge\ast B_2+\Psi_1\wedge \ast\Psi_2)\\
&+\mathrm{Re}\bigg(\sqrt{-1}\int_X(f_1,f_2)_{h^\vee_1\otimes h_2}+(g_1,g_2)_{h_1\otimes h^\vee_2}\bigg),\\
\end{align*}
Since the symplectic forms do not depend on the base point $x$, $\omega_I$ defines a symplectic form on $M$.\par
We now define the moments map $\mu_I$ with respect to $\omega_I$ and the group action $\mathcal{G}_1\times \mathcal{G}_2$.
We regard
\begin{equation*}
A(\mathfrak{u}(E_1))\oplus A(\mathfrak{u}(E_2))
\end{equation*}
as the Lie algebra of $\mathcal{G}_1\times \mathcal{G}_2$ and regard 
\begin{equation*}
A^2(\mathfrak{u}(E_1))\oplus A^2(\mathfrak{u}(E_2))
\end{equation*}
as the dual of the Lie algebra $A(\mathfrak{u}(E_1))\oplus A(\mathfrak{u}(E_2))$ by the pairing
\begin{equation*}
\begin{array}{rccc}
(,)\colon & A(\mathfrak{u}(E_1))\oplus A(\mathfrak{u}(E_2))\times A^2(\mathfrak{u}(E_1))\oplus A^2(\mathfrak{u}(E_2)) &\longrightarrow& \mathbb{R}                   \\
        & \rotatebox{90}{$\in$}&               & \rotatebox{90}{$\in$} \\
        & (u\oplus v, A\oplus B)                    & \longmapsto   & \int_X \mathrm{Tr}(uA+vB)
\end{array}
\end{equation*}

Let 
\begin{equation*}
x=(\nabla_{h_1},\Phi_1,\nabla_{h_2},\Phi_2,\phi,\psi) \in M.
\end{equation*}
We define 
\begin{align*}
\mu_I:M\to A^2(\mathfrak{u}(E_1))\oplus A^2(\mathfrak{u}(E_2))
\end{align*}
as
\begin{align*}
\mu_I(x):=(
F_{h_1}-\Phi_1\wedge\Phi_1-\sqrt{-1}(\phi^\ast\circ \phi)\omega Id_{E_1}&-\sqrt{-1}(\psi\circ \psi^\ast)\omega Id_{E_1},\\
&F_{h_2}-\Phi_2\wedge\Phi_2+\sqrt{-1}(\phi\circ \phi^\ast)\omega Id_{E_2}+\sqrt{-1}(\psi^\ast\circ \psi)\omega Id_{E_2}).
\end{align*}

We can prove that $\mu_I$ is actually a moment map by using the same argument as in \cite[Lemma 2.2]{G2}.
Then it is easy to check from the definition that the space of the solution of the doubly-coupled $\tau$-vortex equation is 
\begin{equation*}
N\cap \mu^{-1}_I(-2\pi\sqrt{-1}\tau Id_{E_1}, -2\pi\sqrt{-1}\tau' Id_{E_2}).
\end{equation*}

We define the moduli space of the doubly-coupled $\tau$-vortex equation $\mathcal{M}_{\tau}$ as
\begin{equation*}
\mathcal{M}_\tau:=N\cap \mu^{-1}_I(-2\pi\sqrt{-1}\tau Id_{E_1}, -2\pi\sqrt{-1}\tau' Id_{E_2})/\mathcal{G}_1\times \mathcal{G}_2.
\end{equation*}

We hope to study the geometry of $\mathcal{M}_\tau$ in the future.\par
It is natural to expect $\mathcal{M}_\tau$ to be a K\"ahler manifold from the moment map picture.
We can moreover expect $\mathcal{M}_\tau$ to be a hyperK\"ahler manifold from the following observation.
Let $x\in M$ and $a\in T_xM$ as above.
We define $\mathbb{R}$-linear maps $J,K:T_xM\to T_xM $ as 
\begin{align*}
J(a)&=J((\dot{A}_1,\dot{\Phi}_1,\dot{B}_1,\dot{\Psi}_1,f_1,g_1)):=(-\dot{\Phi}_1,\dot{A}_1,-\dot{\Psi}_1,\dot{B}_1,-g^\ast_1,f^\ast_1),\\
K(a)&=K((\dot{A}_1,\dot{\Phi}_1,\dot{B}_1,\dot{\Psi}_1,f_1,g_1)):=(-\ast \dot{\Phi}_1,-\ast\dot{A}_1,-\ast \dot{\Psi}_1,-\ast\dot{B}_1, -\sqrt{-1}g^\ast_1,\sqrt{-1}f^\ast_1).
\end{align*}
Then it is easy to check that $J^2=K^2=-Id$ and $K=IJ$.
Hence $I,J,K$ satisfy the quaternion relation.
The author expects that $I,J,K$ defines a hyperK\"ahler structure on $\mathcal{M}_\tau$.

\section{Dimensional Reduction}\label{sec 4}
Let $(X,\omega)$ be a compact Riemann surface and $\mathbb{P}^1$ be a complex projective line. 
We fix these spaces throughout this section. \par
In this section, we show that we obtain a solution of the doubly-coupled $\tau$-vortex equation on $X$ by dimensional reduction of an $SU(2)$-invariant Higgs bundle over $X\times \mathbb{P}^1$ equipped with an invariant Hermitian-Einstein metric.
This gives us a route to show that the existence of the solution of the doubly-coupled $\tau$-vortex equation for certain $\tau$ is equivalent to the stability of Higgs quadruplets.\par
We first recall invariant connections, invariant Higgs bundles.
After, we prove that we obtain a solution of the doubly-coupled $\tau$-vortex equation by dimensional reduction.\par
We freely use the notation of Section \ref{sec2}. 

\subsection{Invariant Connection}
Let $M$ be a smooth manifold and $G$ be a compact Lie group acting on $M$. Let $V$ be a smooth complex vector bundle over $M$. 
We say that $V$ is a $G$-equivalent vector bundle if $G$ acts on $V$ and the action covers the action on $M$.
Let $h_V$ be a hermitian metric on $V$.
We say that $h_V$ is $G$-invariant if the action of $G$ is isometric with respect to $h_V$.
We say a pair $(V, h_V)$ is a $G$-invariant hermitian vector bundle if $V$ is $G$-equivalent and $h_V$ is $G$-invariant.\par
We now focus on a more specific situation. 
In particular, we focus on the case of $M=X\times \mathbb{P}^1$ and $G=SU(2)$.
We also assume that $SU(2)$ acts on $X$ trivially and acts on $\mathbb{P}^1$ as in the Section \ref{sec2}.
Let $p:X\times \mathbb{P}^1\to X$ and $q:X\times \mathbb{P}^1\to  \mathbb{P}^1$ be the projections.\par
We denote the smooth complex line bundle of degree $n$ on $\mathbb{P}^1$ as $H^n$.
We set $H:=H^1$.
Note that $H^n\simeq \mathcal{O}_{\mathbb{P}^1}(n)$ holds as smooth bundles.
The metric $h^{(n)}$ (See Section \ref{sec2}) induces a metric on $H^n$. 
We also denote this metric $h^{(n)}$.
Since $(\mathcal{O}_{\mathbb{P}^1}(n),h^{(n)})$ is an $SU(2)$-invariant hermitian vector bundle, so as $(H^n,h^{(n)})$.\par
We have a clear description of the structure of $SU(2)$-invariant hermitian vector bundles, thanks to \cite{G2}.

\begin{proposition}[{\cite[Proposition 3.1, 3.2]{G2}}]\label{prop 4.1}
Let $F$ be an $SU(2)$-equivalent vector bundle on $X\times \mathbb{P}^1.$ Then $F$ can be uniquely decomposed up to isomorphisms as 
\begin{equation*}
F=\bigoplus_i F_i
\end{equation*}
with $F_i=p^\ast E_i\otimes q^\ast H^{n_i}$, $E_i$ is a vector bundle over $X$ and $n_i\in\mathbb{Z}$ are all different.\par
Let $(F,h)$ be an $SU(2)$-invariant hermitian vector bundle. Let 
\begin{equation*}
F=\bigoplus_i F_i=\bigoplus_i p^\ast E_i\otimes q^\ast H^{n_i}
\end{equation*}
be the decomposition given as above. Then \par
(1) The vector bundles $F_i$ are $SU(2)$-invariantly othogonal to each other; $h=\bigoplus_i\hat{h}_i$ with $\hat{h}_i$ with an $SU(2)$-invariant metric on $F_i$.\par
(2) Each $\hat{h}_i$ has the form $\hat{h}_i=p^\ast h_i\otimes q^\ast h^{(n_i)}$, where $h_i$ is the hermitian metric on $E_i$.
\end{proposition}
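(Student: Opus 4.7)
The plan is to exploit the fact that $\mathbb{P}^1 = SU(2)/U(1)$ is a homogeneous space and that $SU(2)$-equivariant vector bundles over $\mathbb{P}^1$ are classified by finite-dimensional representations of the stabilizer $U(1)$. Since $SU(2)$ acts trivially on $X$, the equivariant structure on $F$ is purely ``vertical,'' and $F$ should split according to $U(1)$-weights uniformly in $x\in X$. The line bundle $H^n$ corresponds, under the associated-bundle construction, to the weight-$n$ character of $U(1)$, and this identification is what produces the factors $p^\ast E_i\otimes q^\ast H^{n_i}$.

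Concretely, I would fix a base point $p_0\in\mathbb{P}^1$ with stabilizer $U(1)\subset SU(2)$, and consider the restriction $\tilde F := F|_{X\times\{p_0\}}$. This is a $U(1)$-equivariant bundle over $X$ on which $U(1)$ acts trivially on the base, so averaging over $U(1)$ produces smooth projectors onto weight-isotypic components and gives a smooth decomposition $\tilde F = \bigoplus_i E_i\otimes V_{n_i}$, where $V_{n_i}$ is the weight-$n_i$ character and the $E_i$ are the (pairwise distinct) isotypic subbundles over $X$. The $SU(2)$-action then spreads each summand into the $SU(2)$-equivariant bundle obtained by induction from $U(1)$ to $SU(2)$, which for the weight-$n$ character is exactly $H^n$. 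This yields the asserted decomposition. Uniqueness up to isomorphism follows because the data $(E_i,n_i)$ are recovered canonically as the $U(1)$-isotypic components of the fiber at $p_0$.

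For the hermitian statements, I would argue fiberwise in the $\mathbb{P}^1$-direction. For $n_i\neq n_j$ the off-diagonal component $h|_{F_i\otimes\overline{F_j}}$ is an $SU(2)$-invariant section of a bundle whose $q^\ast$-factor is of the form $q^\ast H^{\pm(n_i-n_j)}$ of nonzero degree; restricting to each slice $\{x\}\times\mathbb{P}^1$ and applying Proposition \ref{prop 2.1}(1) forces this section to vanish, giving the orthogonal splitting $h=\bigoplus_i\hat h_i$. For the shape of $\hat h_i$, an $SU(2)$-invariant hermitian metric on $F_i$ restricts over $\{x\}\times\mathbb{P}^1$ to an $SU(2)$-invariant hermitian pairing on $E_{i,x}\otimes H^{n_i}$; since $SU(2)$ acts transitively on $\mathbb{P}^1$, the space of such pairings is one-dimensional over $\mathrm{Herm}(E_{i,x})$, and every element has the form $h_{i,x}\otimes h^{(n_i)}$. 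Smooth dependence of $h_{i,x}$ on $x$---which follows from smoothness of $\hat h_i$---defines the hermitian metric $h_i$ on $E_i$, and $\hat h_i = p^\ast h_i\otimes q^\ast h^{(n_i)}$.

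The main obstacle will be the smooth-category bookkeeping in the associated-bundle step: one must check that the fiberwise weight decomposition assembles into smooth subbundles (standard, using smooth $U(1)$-averaging projectors), and that the $SU(2)$-equivariant bundle induced from $V_n$ on $SU(2)/U(1)$ is genuinely $H^n$ rather than some other equivariant twist. Once this identification is in place---essentially the Borel--Weil description of $\mathcal{O}_{\mathbb{P}^1}(n)$ recalled in Section \ref{sec 2.3}---the remaining verifications reduce to direct applications of Proposition \ref{prop 2.1} slice by slice.
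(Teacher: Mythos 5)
The paper gives no proof of this proposition at all---it is imported verbatim from Garc\'ia-Prada \cite[Propositions 3.1, 3.2]{G2}---and your argument is exactly the standard one underlying that reference: restrict to a slice $X\times\{p_0\}$, take the $U(1)$-isotypic decomposition of the fibre action (the $SU(2)$-stabilizer acting trivially on $X$), identify the bundle induced from the weight-$n$ character with $H^{n}$, and then obtain the orthogonality of the $F_i$ and the product form of $\hat h_i$ from transitivity of the $SU(2)$-action together with the vanishing of invariant sections of $H^{m}$ for $m\neq 0$ (Proposition \ref{prop 2.1}(1)). The sketch is correct, and the only points requiring care are those you already flag: constancy of the rank of the averaging projectors over the connected base $X$, smoothness of the induced-bundle isomorphism $SU(2)\times_{U(1)}\tilde F\to F$, and the sign convention identifying $SU(2)\times_{U(1)}\mathbb{C}_n$ with $H^{n}$ rather than $H^{-n}$, which at worst relabels the summands.
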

Let $(F,h)$ be an $SU(2)$-invariant hermitian vector bundle over $X\times\mathbb{P}^1$.
Assume that it is decomposed as 
\begin{equation*}
F=\bigoplus^m_{i=1} F_i\,\, \mathrm{and}\,\, h=\bigoplus^m_{i=1} h_i.
\end{equation*}
Let $D$ be a connection on $F$.
We do not assume now $D$ to be $SU(2)$-equivalent.
Recall that any connection $D$ has the decomposition 
\begin{equation*}
D=\nabla_h+\sqrt{-1}\Phi
\end{equation*}
such that $\nabla_h\in \mathcal{A}_h$ and $\Phi\in A^{1}(\mathfrak{u}(F))$. 
With respect to the decomposition $F=\bigoplus^m_{i=1} F_i$, $\nabla_h$ and $\Phi$ has the decomposition as 
\begin{align*}
\nabla_h=
\begin{pmatrix}
\nabla_{h_1} & A_{12} & \dots  & A_{1m} \\
  A_{21} & \nabla_{h_2} & \dots  & A_{2m} \\
  \vdots & \vdots & \ddots & \vdots \\
  A_{m1} & A_{m2} & \dots  & \nabla_{h_m}
\end{pmatrix}
\end{align*}
and 
\begin{align*}
\Phi=
\begin{pmatrix}
\Phi_{11} & \Phi_{12} & \dots  & \Phi_{1m} \\
  \Phi_{21} & \Phi_{22} & \dots  & \Phi_{2m} \\
  \vdots & \vdots & \ddots & \vdots \\
  \Phi_{m1} & \Phi_{m2} & \dots  & \Phi_{mm}
\end{pmatrix}
\end{align*}
where $A_{ij},\Phi_{ij}\in A^1(\mathrm{Hom}(F_j,F_i))$.
The following holds.
\begin{proposition}\label{prop 4.2}
(1) $\nabla_{h_i}$ is a metric connection for $(F_i,h_i)$.\par
(2) $A_{ij}$ and $-A_{ji}$ are self-adjoint with respect to the metric $h$:
 \begin{equation*}
 h(A_{ji}s,t)+h(s,A_{ij}t)=0\,\,\mathrm{for\, all}\,\, s\in A(F_i), t\in A(F_j).
 \end{equation*}
 \par
 (3) $\Phi_{ij}$ and $\Phi_{ij}$ are self-adjoint with respect to the metric $h$.
  \end{proposition}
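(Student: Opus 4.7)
The plan is to derive all three assertions from two basic facts: the metric compatibility $dh(s,t) = h(\nabla_h s, t) + h(s, \nabla_h t)$ satisfied by any unitary connection $\nabla_h \in \mathcal{A}_h$, and the pointwise skew-adjointness $h(\Phi s, t) + h(s, \Phi t) = 0$ satisfied by any $\Phi \in A^1(\mathfrak{u}(F))$, combined with the orthogonal decomposition $F = \bigoplus_i F_i$, $h = \bigoplus_i \hat{h}_i$ provided by Proposition \ref{prop 4.1}. Throughout, I view a section of $F_i$ as a section of $F$ via the inclusion induced by the orthogonal decomposition.

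For (1), I would take $s, t \in A(F_i)$. Then $h(s,t) = \hat{h}_i(s,t)$ and $dh(s,t) = d\hat{h}_i(s,t)$. The matrix form of $\nabla_h$ gives $\nabla_h s = \nabla_{h_i} s + \sum_{j \neq i} A_{ji} s$, where each $A_{ji} s$ takes values in $F_j$ and is therefore $h$-orthogonal to $t \in A(F_i)$. The same holds for $\nabla_h t$. Thus metric compatibility of $\nabla_h$ collapses to $d\hat{h}_i(s,t) = \hat{h}_i(\nabla_{h_i} s, t) + \hat{h}_i(s, \nabla_{h_i} t)$, which is precisely the metric condition for $\nabla_{h_i}$ on $(F_i, h_i)$.

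For (2), I would take $s \in A(F_i)$ and $t \in A(F_j)$ with $i \neq j$. Orthogonality forces $h(s,t) \equiv 0$, so $dh(s,t) = 0$. In the expansion of $h(\nabla_h s, t)$, only the component of $\nabla_h s$ landing in $F_j$ pairs non-trivially with $t$, namely $A_{ji} s$; symmetrically, only $A_{ij} t$ contributes in $h(s, \nabla_h t)$. Metric compatibility then reduces directly to $h(A_{ji} s, t) + h(s, A_{ij} t) = 0$, which is the claim. For (3), the same block-decomposition argument applied to the algebraic identity $h(\Phi s, t) + h(s, \Phi t) = 0$ (with $s \in A(F_i)$, $t \in A(F_j)$) isolates the $\Phi_{ji}$ and $\Phi_{ij}$ blocks and yields the stated relation; no differentiation is required since $\Phi$ is tensorial.

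There is no genuine obstacle here: once Proposition \ref{prop 4.1} has done the structural work of splitting $(F,h)$ into mutually orthogonal summands with the right metrics, the three assertions are a routine bookkeeping exercise in restricting a tensorial or differential identity on $F$ to matched pairs of summands. The only care needed is in tracking which block $A_{ij}$ or $\Phi_{ij}$ maps from $F_j$ to $F_i$, so that the matrix entries used in the metric pairing are the ones indicated in the statement.
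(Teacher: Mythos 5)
Your proof is correct, and for part (3) it is exactly the paper's argument: apply the pointwise skew-adjointness of $\Phi\in A^1(\mathfrak{u}(F))$ to $s\in A(F_i)$, $t\in A(F_j)$ and use the orthogonality of the decomposition $h=\bigoplus_i \hat h_i$ to isolate the blocks $\Phi_{ji}$ and $\Phi_{ij}$. For (1) and (2) the paper simply cites \cite[Proposition 3.4]{G2}, whereas you carry out the direct check from metric compatibility $dh(s,t)=h(\nabla_h s,t)+h(s,\nabla_h t)$ plus orthogonality of the summands; this is the standard computation behind the cited result, so the only difference is that you make the argument self-contained rather than outsourcing it.
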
 
 \begin{proof}
 (1) and (2) are proved in \cite[Proposition 3.4]{G2}.\par
 (3) Let $s\in A(F_i)$ and $t\in A(F_j)$. Since $\Phi$ is skew-symmetric, we have 
 \begin{equation*}
 h(\Phi s,t)+h(s,\Phi t)=0.
 \end{equation*} 
Therefore
\begin{align*}
0&=h(\Phi s,t)+h(s,\Phi t)\\
&=h(\sum_{k}\Phi_{ki}s,t)+h(s,\sum_{l}\Phi_{lj}t)\\
&=h_i(\Phi_{ji}s,t)+h_j(s,\Phi_{ij}t).
\end{align*}
The third equation follows from the decomposition of the $h$.
 \end{proof}
 The following case is relevant for us.
 Let $(E_1,h_1)$ and $(E_2,h_2)$ be hermitian vector bundles on $X$. 
 Let $F$ be the vector bundle over $X\times \mathbb{P}^1$ defined as 
 \begin{equation*}
 F=F_1\oplus F_2=p^\ast E_1\oplus p^\ast E_2\otimes q^\ast H^2
 \end{equation*} 
 with a hermitian metric $h=p^\ast h_1\oplus p^\ast h_2\otimes q^\ast h_2$.
Note the total space of $p^\ast E_i$ is $E_i\times \mathbb{P}^1$ and the $SU(2)$-action we are considering is trivial on $E_i$ and the standard one on $\mathbb{P}^1$.
Therefore $(F, h)$ is an $SU(2)$-invariant hermitian vector bundle. \par
Let $D$ be a connection on $F$ and $D=\nabla_h+\sqrt{-1}\Phi$ be the decomposition to a metric connection and a skew-symmetric 1-form.
Recall that by Proposition \ref{prop 4.2}, $D$ has the form 
\begin{align*}
D&=\nabla_h+\sqrt{-1}\Phi\\
&=
\begin{pmatrix}
\nabla_{h_1}& A\\
-A^\ast&\nabla_{h_2}
\end{pmatrix}
+
\sqrt{-1}
\begin{pmatrix}
\Phi_{1}&-B^\ast\\
B&\Phi_{2}
\end{pmatrix}
\end{align*}
where $\nabla_{h_i}\in\mathcal{A}_{h_i}$, $\Phi_i\in A^{1}(\mathfrak{u}(F_i))$, $A\in A^1(\mathrm{Hom}(F_2,F_1)), B\in A^1(\mathrm{Hom}(F_1,F_2))$, and $A^\ast, B^\ast$ are the formal adjoints of $A, B$.\par
We next study the structure of an $SU(2)$-invariant connection on $F$.
Before we proceed, we make clear what an $SU(2)$-invariant connection means.\par
We go back to the general settings. 
Let $M$ be a smooth manifold with a group $G$ acting on it. 
Let $\pi:V\to M$ be a smooth $G$-equivalent complex vector bundle over $M$.
This is equivalent to assume that for every $g\in G$, we have a map $\Psi_g:V\to V$ such that $\Psi_g:V_x=\pi^{-1}(x)\to V_{g\cdot x}$ holds and $\Psi_g|_{V_x}$ is a linear isomorphism.
For every $g\in G$, we can define a pullback bundle $g^\ast V$. $\Psi_g$ induces a vector bundle isomorphism $\hat{\Psi}_g;V\to g^\ast V$.
Let $D$ be a connection on $V$.
Then $g$ defines a pullback connection $g^\ast D$ on $g^\ast V$.
We say that $D$ is a $G$-invariant connection if $(\hat{\Psi}_g)^{-1}\circ g^\ast D\circ \hat{\Psi}_g=D$ holds.\par
We go back to the above setting. 
Let $T^\ast (X\times \mathbb{P}^1), T^\ast X, T^\ast\mathbb{P}^1$ be the smooth cotangent bundles of $X\times\mathbb{P}^1, X, \mathbb{P}^1$. Note that 
\begin{equation*}
T^\ast\mathbb{P}^1=T^{\ast(1,0)}\mathbb{P}^1\oplus T^{\ast(0,1)}\mathbb{P}^1\simeq\mathcal{O}_{\mathbb{P}^1}(-2)\oplus \mathcal{O}_{\mathbb{P}^1}(2)\simeq H^{-2}\oplus H^2
\end{equation*}
 holds as smooth bundles.
We also note that 
\begin{equation*}
T^\ast(X\times \mathbb{P}^1)=p^\ast T^\ast X\oplus q^\ast T^\ast \mathbb{P}^1
\end{equation*} 
holds.
Since $T^\ast \mathbb{P}^1\simeq \mathcal{O}_{\mathbb{P}^1}(-2)\oplus \mathcal{O}_{\mathbb{P}^1}(2)$, $SU(2)$ acts on $T^\ast \mathbb{P}^1$. 
This induces an $SU(2)$-action on $T^\ast(X\times \mathbb{P}^1)$.
 Recall that we assumed $SU(2)$ acts on $X$ trivially. 
 Therefore $T^\ast(X\times \mathbb{P}^1)$ is an $SU(2)$-equivalent bundle.\par
 Let $E$ be an $SU(2)$-equivalent vector bundle over $X\times\mathbb{P}^1$. 
 We say a section $a\in A^1(E)$ is $SU(2)$-invariant if it is an $SU(2)$-invariant section of the vector bundle $T^\ast(X\times \mathbb{P}^1)\otimes E.$\par
 Let $D$ be an $SU(2)$-invariant connection on $F$ and let $\nabla_{h_i},\Phi_i,A,B$ be as above.
 It is straightforward to check that the $SU(2)$-invariance of $D$ implies the $SU(2)$-invariance of $\nabla_{h_i},\Phi_i,A,B.$\par
The following result was essentially proved in \cite[Proposition 3.5]{G2}. 
Some part of it is slightly modified from the original one, and some results are added for our purpose.

 \begin{proposition}[{\cite[Proposition 3.5]{G2}}]\label{prop 4.3}
(1) Let $\nabla_{p^\ast h_1}$ be an $SU(2)$-invariant unitary connection on $(F_1,p^\ast h_1)$. 
Then there exists an unitary connection $\nabla_{h_1}$ on $(E_1,h_1)$ such that $\nabla_{p^\ast h_1}=p^\ast\nabla_{h_1}$.\par
(2) Let $\nabla_{p^\ast h_2\otimes q^\ast h^{(2)}}$ be an $SU(2)$-invariant unitary connection on $(F_2, p^\ast h_2\otimes q^\ast h^{(2)})$. 
Then there exists an unitary connection $\nabla_{h_2}$ on $(E_2,h_2)$ such that $\nabla_{p^\ast h_1}=p^\ast\nabla_{h_1}\otimes Id+Id\otimes q^\ast \nabla_{h^{(2)}}$.
Here $\nabla_{h^{(2)}}$ is the unique unitary connection of $H^{2}$. \par
(3) $A=p^\ast\psi\otimes q^\ast \alpha$ where $\phi\in A(E_1\otimes E^\ast_2)$ and $\alpha$ is the the unique $SU(2)$-invariant section of $A^1(H^{-2})$ up to constants.
 In particular $\alpha\in A^{0,1}(H^{-2})$.\par
(4) $B=p^\ast \phi\otimes q^\ast\beta$ where $\psi\in A(E^\ast_1\otimes E_2)$ and $\beta$ is the unique $SU(2)$-invariant section of $A^1(H^{2})$ up to constants.
 In particular $\beta\in A^{1,0}(H^{2})$.\par
(5) For each $\Phi_i\in A^1(\mathfrak{u}(F_i))$, there exists $\Psi\in A^1(\mathfrak{u}(E_i))$ such that $\Phi_i=p^\ast\Psi_i$.\par
In particular every $SU(2)$-invariant connection $D$ on $F$ has the form
\begin{align*}
D=
\begin{pmatrix}
p^\ast\nabla_{h_1}& p^\ast \psi\otimes q^\ast \alpha \\
-(p^\ast \psi\otimes q^\ast \alpha)^\ast& p^\ast\nabla_{h_2}\otimes Id +Id\otimes q^\ast\nabla_{h^{(2)}}
\end{pmatrix}
+
\sqrt{-1}
\begin{pmatrix}
p^\ast\Psi_1& -(p^\ast \phi\otimes q^\ast\beta)^\ast\\
p^\ast \phi\otimes q^\ast\beta& p^\ast\Psi_2
\end{pmatrix}
\end{align*}
where $\nabla_{h_i}$ are unitary connections on $(E_i,h_i)$, $\Psi_i\in A^1(\mathfrak{u}(E_i))$, and $\phi\in A(\mathrm{Hom}(E_1,E_2)),\psi\in A(\mathrm{Hom}(E_2,E_1))$.
\end{proposition}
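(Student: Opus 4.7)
All five claims reduce to Proposition \ref{prop 2.1} via one structural remark: each coefficient bundle entering an $SU(2)$-invariant connection on $F=F_1\oplus F_2$ has the form $p^\ast V\otimes q^\ast H^k$ with $V$ a bundle on $X$ and $k\in\mathbb{Z}$. Explicitly, $\mathrm{End}(F_1)=p^\ast\mathrm{End}E_1$, $\mathrm{End}(F_2)\simeq p^\ast\mathrm{End}E_2$ (since $H^2\otimes H^{-2}$ is canonically trivial), $\mathrm{Hom}(F_2,F_1)\simeq p^\ast\mathrm{Hom}(E_2,E_1)\otimes q^\ast H^{-2}$, and $\mathrm{Hom}(F_1,F_2)\simeq p^\ast\mathrm{Hom}(E_1,E_2)\otimes q^\ast H^{2}$. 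Combined with $T^\ast(X\times\mathbb{P}^1)=p^\ast T^\ast X\oplus q^\ast T^\ast\mathbb{P}^1$ and $T^\ast\mathbb{P}^1\simeq H^{-2}\oplus H^{2}$ (Lemma \ref{lem 2.1}), every $SU(2)$-invariant 1-form I need to analyze splits into finitely many pieces of the shape $A(p^\ast V\otimes q^\ast H^k)^{SU(2)}$ or $A^1(p^\ast V\otimes q^\ast H^k)^{SU(2)}$. Since $SU(2)$ acts trivially on $X$, such a piece restricts fiberwise over $X$ to $A(H^k)^{SU(2)}\otimes V_x$ or $A^1(H^k)^{SU(2)}\otimes V_x$; the former vanishes for $k\ne 0$ by Proposition \ref{prop 2.1}(1), while the latter for $k=\pm 2$ is one-dimensional and of pure Hodge type by Proposition \ref{prop 2.1}(2), generated by $\alpha\in A^{0,1}(H^{-2})$ or $\beta\in A^{1,0}(H^{2})$.

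For (1) and (2), I fix a reference product connection $p^\ast\nabla^0_{h_1}$ on $F_1$ (respectively $p^\ast\nabla^0_{h_2}\otimes Id+Id\otimes q^\ast\nabla_{h^{(2)}}$ on $F_2$); this is $SU(2)$-invariant because $\nabla_{h^{(2)}}$ is the Chern connection of the invariant metric $h^{(2)}$. Any other $SU(2)$-invariant unitary connection differs from it by an element of $A^1(\mathfrak{u}(F_i))\simeq A^1(p^\ast\mathfrak{u}(E_i))$. Splitting this difference as an $X$-part plus a $\mathbb{P}^1$-part, the $\mathbb{P}^1$-part lies in $A(q^\ast(H^{-2}\oplus H^2)\otimes p^\ast\mathfrak{u}(E_i))^{SU(2)}$ and therefore vanishes, while the $X$-part descends to an element of $A^1(\mathfrak{u}(E_i))$ on $X$, which I absorb into $\nabla^0_{h_i}$ to produce the desired $\nabla_{h_i}$. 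The same splitting applied to $\Phi_i\in A^1(\mathfrak{u}(F_i))$ delivers (5) immediately.

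For (3), I split $A=A_X+A_{\mathbb{P}^1}\in A^1(p^\ast\mathrm{Hom}(E_2,E_1)\otimes q^\ast H^{-2})$. The $X$-part vanishes because its coefficient bundle retains the factor $q^\ast H^{-2}$ whose $SU(2)$-invariant sections are zero. For the $\mathbb{P}^1$-part, Proposition \ref{prop 2.1}(2) identifies the fiberwise $SU(2)$-invariant elements as $\alpha\otimes\psi(x)$ with $\psi(x)\in\mathrm{Hom}(E_{2,x},E_{1,x})$; smoothness of $A$ promotes $\psi$ to a smooth section of $\mathrm{Hom}(E_2,E_1)$, yielding $A=p^\ast\psi\otimes q^\ast\alpha\in A^{0,1}$. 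Claim (4) is identical after substituting $H^{2}$ for $H^{-2}$ and $\beta$ for $\alpha$. The proof is essentially bookkeeping; the only subtle step is promoting the fiberwise description of $SU(2)$-invariant elements to a global smooth one, which follows from the identification $A(p^\ast V\otimes q^\ast H^k)^{SU(2)}=A(V)\otimes A(H^k)^{SU(2)}$ via $SU(2)$-equivariant pushforward along $p$.
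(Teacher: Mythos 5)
Your proposal is correct and follows essentially the same route as the paper: decompose every relevant coefficient bundle as $p^\ast V\otimes q^\ast H^k$, split the $1$-forms into their $X$- and $\mathbb{P}^1$-directions, and invoke Proposition \ref{prop 2.1} to kill the pieces with $k\neq 0$ and to identify the one-dimensional invariant spaces spanned by $\alpha$ and $\beta$. The only difference is presentational: where the paper defers parts (1)--(3) to \cite{G2}, you carry out the same argument (including the descent from fiberwise invariant data to smooth sections over $X$) in a uniform, self-contained way.
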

\begin{proof}
In \cite{G2}, (1), (2) were proved for general $SU(2)$-invariant connections, and (3) was proved. \par
We give the outline of the proof of $(1), (2)$. Let $\nabla_{h_1}$ be an unitary connection of $(E_1,h_1).$
Then it is clear that $p^\ast\nabla_{h_1}$ is a metric connection of $(F_1=p^\ast E_1,p^\ast h_1)$ and $p^\ast\nabla_{h_1}$ is $SU(2)$-equivalent.
Let $\nabla_{p^\ast h_1}$ be a unitary connection of $(F_1,p^\ast h_1)$.
Note that $\nabla_{p^\ast h_1}-p^\ast\nabla_{h_1}=C\in A^1(\mathfrak{u}(F_1))$.
We show that if $\nabla_{p^\ast h_1}$ is $SU(2)$-equivalent, there exists $C'\in A^1(\mathfrak{u}(E))$ such that $C= p^\ast C'.$
This induces the proof of (1).
\par
Suppose $\nabla_{p^\ast h_1}$ is $SU(2)$-equivalent.
Then $C$ is also an $SU(2)$-equivalent section.
Let $A^1(\mathrm{End}F_1)^{SU(2)}$ be the space of $SU(2)$-equivalent section of $\mathrm{End}F_1$.
It was shown in \cite{G2} that $A^1(\mathrm{End}F_1)^{SU(2)}=p^\ast(A^1(\mathrm{End}E))$.
 Therefore, we have a $C'\in A^1(\mathrm{End}E_1)$ such that $C=p^\ast C'$.
 Since $C\in A^1(\mathfrak{u}(F_1))$, $C'\in A^1(\mathfrak{u}(E_1))$.
\par
(2) can be proved by tensoring $q^\ast H^{-2}$ to $F_2$ and the connection $q^\ast \nabla_{h^{(-2)}}$ which is the dual of $q^\ast \nabla_{h^{(2)}}$ to $\nabla_{p^\ast h_2\otimes q^\ast h^{(2)}}$ and apply (1).\par
For the proof of (3), see \cite{G2}.
We only give some comments why $\alpha\in A^{0,1}(H^{-2})$.
First
\begin{align*}
A^{1}(H^{-2})&=A(T^\ast\mathbb{P}^1\otimes H^{-2})\\
&=A((H^2\oplus H^{-2}\otimes )H^{-2})\\
&=A(H^2\otimes H^{-2})\otimes A(H^{-2}\otimes H^{-2}).
\end{align*}
Then by the same argument as in Proposition \ref{prop 2.1}, $A(H^{-2}\otimes H^{-2})$ cannot have an $SU(2)$-invariant section.
Therefore
\begin{equation*}
\alpha\in A(H^2\otimes H^{-2})=A^{0,1}(H^{-2}).
\end{equation*}\
The equation follows from $T^{\ast(0,1)}\mathbb{P}^1\simeq H^2$. The uniqueness of $\alpha$ follows from Proposition \ref{prop 2.1}.
\par
(4) We have the following isomorphisms
\begin{align*}
A^1(\mathrm{Hom}(F_1,F_2))=&A^1(F^\vee_1\otimes F_2)\\
=&A^1(p^\ast(E^\vee_1\otimes E_2)\otimes q^\ast H^2)\\
=& A((p^\ast T^\ast X\oplus q^\ast T^\ast \mathbb{P}^1)\otimes p^\ast(E^\ast_1\otimes E_2)\otimes q^\ast H^2)\\
\simeq &A(p^\ast(E^\vee_1\otimes E_2\otimes T^\ast X)\otimes q^\ast H^2)\\
&\oplus A(p^\ast(E^\vee_1\otimes E_2)\otimes q^\ast H^{-2}\otimes q^\ast H^2)\\
&\oplus A(p^\ast(E^\vee_1\otimes E_2)\otimes q^\ast H^2\otimes q^\ast H^2)
\end{align*}
Suppose $s\in A(p^\ast(E^\ast_1\otimes E_2\otimes T^\ast X)\otimes q^\ast H^2)\,\,(\mathrm{resp.}\,\,A(p^\ast(E^\ast_1\otimes E_2)\otimes q^\ast H^2\otimes q^\ast H^2) )$ is an $SU(2)$-invariant section.
Then it indues an $SU(2)$-invariant section of $A(H^2)\,\,(\mathrm{resp.}\,\,A(H^2\otimes H^2))$ by restriction.
However, such sections do not exist by Proposition  \ref{prop 2.1}.\par
Therefore $B\in A(p^\ast(E^\ast_1\otimes E_2)\otimes q^\ast H^{-2}\otimes q^\ast H^2)$.
The $SU(2)$-invariance of $B$ implies that it has a form
\begin{equation*}
B=p^\ast \phi\otimes  q^\ast \beta
\end{equation*}
where $\phi\in A(E^\ast_1\otimes E_2)$ and $\beta\in A^1(H^{2})$.\par
For the same reason with $\alpha\in A^{0,1}(H^{-2})$, $\beta\in A^{1,0}(H^{2}).$
The uniqueness of $\beta$ follows from Proposition \ref{prop 2.1}.\par
(5) We already know that there exists a $\Psi_1\in A^1(\mathfrak{u}(E_1))$ such that  $\Phi_1=p^\ast \Psi_1$.
We give a proof for $\Phi_2$.
First, we note that 
\begin{align*}
A(\mathrm{End}F_2)&=A(F_2^\vee\otimes F_2)\\
&=A((p^\ast E_2\otimes q^\ast H^{2})^\vee\otimes p^\ast E_2\otimes q^\ast H^{2}))\\
&=A(p^\ast (E_2^\vee\otimes E_2)).
\end{align*}
Therefore, we can apply the above argument to show that there exists a $\Psi_2\in A^1(\mathfrak{u}(E_2))$ such that  $\Phi_2=p^\ast \Psi_2$.
\end{proof}

Recall that in Section \ref{sec2}, we studied the set 
\begin{equation*}
M=\mathcal{A}_{h_1}\times A^1(\mathfrak{u}(E_1))\times\mathcal{A}_{h_2}\times A^1(\mathfrak{u}(E_2))\times  A(\mathrm{Hom}(E_1,E_2))\times A(\mathrm{Hom}(E_2,E_1)).
\end{equation*}
Let $(\nabla_{h_1},\Psi_1,\nabla_{h_2},\Psi_2, \phi,\psi)\in M$.
We define the connection $D$ on $F$ as 
\begin{align*}
D=
\begin{pmatrix}
p^\ast\nabla_{h_1}& p^\ast \psi\otimes q^\ast \alpha \\
-(p^\ast \psi\otimes q^\ast \alpha)^\ast& p^\ast\nabla_{h_2}\otimes Id +Id\otimes q^\ast\nabla_{h^{(2)}}
\end{pmatrix}
+
\sqrt{-1}
\begin{pmatrix}
p^\ast\Psi_1& -(p^\ast \phi\otimes q^\ast\beta)^\ast\\
p^\ast \phi\otimes q^\ast\beta& p^\ast\Psi_2
\end{pmatrix}.
\end{align*}
This connection is clearly an $SU(2)$-invariant connection.
Let 
\begin{align*}
\mathcal{A}^{SU(2)}_{\mathrm{Conn},F}:=\{ D\,\,|\,\, D\,\, \textrm{is an} \,\, SU(2)\textrm{-invariant connection of}\,\, F\}.
\end{align*}
Combining Proposition \ref{prop 4.3} and the above argument, we have the following.

\begin{proposition}\label{prop 4.4}
Let $D\in\mathcal{A}^{SU(2)}_{\mathrm{Conn},F}$ and let 
\begin{align*}
D=
\begin{pmatrix}
p^\ast\nabla_{h_1}& p^\ast \psi\otimes q^\ast \alpha \\
-(p^\ast \psi\otimes q^\ast \alpha)^\ast& p^\ast\nabla_{h_2}\otimes Id +Id\otimes q^\ast\nabla_{h^{(2)}}
\end{pmatrix}
+
\sqrt{-1}
\begin{pmatrix}
p^\ast\Psi_1& -(p^\ast \phi\otimes q^\ast\beta)^\ast\\
p^\ast \phi\otimes q^\ast\beta& p^\ast\Psi_2
\end{pmatrix}
\end{align*}
be the decomposition we obtained in Proposition \ref{prop 4.3}.\par
We define a map
\begin{align*}
\iota_h:\mathcal{A}^{SU(2)}_{\mathrm{Conn,F}}\to M
\end{align*}
as 
\begin{equation*}
\iota_h(D):=(\nabla_{h_1},\Psi_1,\nabla_{h_2},\Psi_2, \phi,\psi)\in M.
\end{equation*}
Then $\iota$ is bijective.
\end{proposition}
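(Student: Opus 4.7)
The plan is to verify bijectivity by explicit construction of the inverse, leaning on Proposition \ref{prop 4.3} for everything nontrivial.

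First I would argue that $\iota_h$ is well-defined. Any connection $D$ on $F$ admits a unique decomposition $D = \nabla_h + \sqrt{-1}\Phi$ into a metric connection and a skew-symmetric 1-form (since the space of connections is an affine space over $A^1(\mathrm{End}F)$ and $\mathfrak{u}(F) \oplus \sqrt{-1}\mathfrak{u}(F) = \mathrm{End}F$ as a decomposition into self-adjoint and skew parts). Under the orthogonal splitting $F = F_1 \oplus F_2$ provided by Proposition \ref{prop 4.1}, both $\nabla_h$ and $\Phi$ split into matrix blocks as in Proposition \ref{prop 4.2}. When $D$ is $SU(2)$-invariant, each block inherits the invariance, and Proposition \ref{prop 4.3} then gives a \emph{unique} description of each block: the diagonal metric pieces pull back from $\nabla_{h_1}$ on $E_1$ and from $\nabla_{h_2} \otimes \mathrm{Id} + \mathrm{Id} \otimes \nabla_{h^{(2)}}$ on $E_2 \otimes H^2$; the off-diagonal piece $A$ has the form $p^\ast \psi \otimes q^\ast \alpha$ for a unique $\psi \in A(\mathrm{Hom}(E_2,E_1))$ (because $\alpha$ is the distinguished basis of $A^1(H^{-2})^{SU(2)}$); similarly $B = p^\ast \phi \otimes q^\ast \beta$; and the skew pieces $\Phi_i$ pull back uniquely from $\Psi_i \in A^1(\mathfrak{u}(E_i))$. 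Thus $\iota_h$ is well-defined and, by the uniqueness just recalled, injective.

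For surjectivity, given any tuple $(\nabla_{h_1}, \Psi_1, \nabla_{h_2}, \Psi_2, \phi, \psi) \in M$, define $D$ by the explicit matrix formula in the statement. I need to check that $D$ is both unitary (after the decomposition into metric part plus $\sqrt{-1}$ times a skew-symmetric form) and $SU(2)$-invariant. The unitary/skew properties are built into the formula: the diagonal blocks of the first matrix are metric connections by Proposition \ref{prop 4.3}(1)(2) since $\nabla_{h_i}$ are unitary on $E_i$ and $\nabla_{h^{(2)}}$ is the Chern connection of the $SU(2)$-invariant metric $h^{(2)}$; the off-diagonal arrangement $\bigl(\begin{smallmatrix} 0 & A \\ -A^\ast & 0\end{smallmatrix}\bigr)$ is automatically skew with respect to $h$; and the $\sqrt{-1}$ block is skew by construction. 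As for $SU(2)$-invariance, each ingredient is invariant: $p^\ast \nabla_{h_i}$, $p^\ast \Psi_i$, $p^\ast \phi$, $p^\ast \psi$ are pulled back from $X$ (on which $SU(2)$ acts trivially), $q^\ast \nabla_{h^{(2)}}$ is invariant because $h^{(2)}$ is, and $\alpha, \beta$ are the distinguished invariant generators from Proposition \ref{prop 2.1}. Tensor products and direct sums of invariant objects are invariant, so $D$ lies in $\mathcal{A}^{SU(2)}_{\mathrm{Conn},F}$. By construction $\iota_h(D)$ is the original tuple, proving surjectivity.

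The only subtle point is the well-definedness/injectivity clause, where one must be sure that the decomposition displayed in Proposition \ref{prop 4.3} is actually canonical: the bases $\alpha$ and $\beta$ are fixed once and for all (Proposition \ref{prop 2.1}), so the extraction of $\phi$ and $\psi$ from the off-diagonal blocks is unambiguous, and likewise the identification of pulled-back tensors via $A(p^\ast V)^{SU(2)} = A(V)$ is canonical. Everything else is routine. I expect no serious obstacle; the work is entirely bookkeeping on top of Propositions \ref{prop 2.1}, \ref{prop 4.1}, \ref{prop 4.2}, and \ref{prop 4.3}.
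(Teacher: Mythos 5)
Your proposal is correct and follows essentially the same route as the paper: the paper obtains Proposition \ref{prop 4.4} precisely by combining the uniqueness of the block decomposition from Propositions \ref{prop 4.2} and \ref{prop 4.3} (well-definedness and injectivity, with $\alpha,\beta$ fixed as the invariant generators from Proposition \ref{prop 2.1}) with the observation that the explicit matrix formula applied to an arbitrary tuple in $M$ produces an $SU(2)$-invariant unitary-plus-skew connection (surjectivity). Your write-up simply spells out these bookkeeping steps in more detail than the paper, which treats them as immediate.
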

\subsection{Invariant Higgs Bundles}
We use the notations of the previous section.\par
Let $M$ be a complex manifold with a group $G$ acting on it.
A $G$-equivalent Higgs bundle $(V,\overline\partial_V,\theta_V)$ over $M$ is a Higgs bundle over $M$ such that $V$ is a $G$-equivalent complex vector bundle over $M$, $\overline\partial_V$ is a $G$-invariant $(0,1)$-type differential operator, and $\theta_V$ is a $G$-invariant  section of $A^{1,0}(\mathrm{End}V)$.\par

In this section, we study $SU(2)$-invariant Higgs bundles on
 $F=F_1\oplus F_2=p^\ast E_1\oplus p^\ast E_2\otimes q^\ast H^2=p^\ast E_1\oplus p^\ast E_2\otimes q^\ast \mathcal{O}_{\mathbb{P}^1}(2)$, using an $SU(2)$-invariant hermitian metric $h=p^\ast h_1\oplus p^\ast h_2\otimes q^\ast h^{(2)}$ and $\iota_h$.
We use $\mathcal{O}_{\mathbb{P}^1}(2)$ instead of $H^2$ since we deal with holomorphic objects.\par
Let 
\begin{align*}
A^{SU(2)}_{\mathrm{Higgs},F}:=
\{
(\overline\partial_F,\theta_F)\,\,|\,\, (F,\overline\partial_F,\theta_F)\,\,\textrm{is an} \,\,SU(2)\textrm{-invariant Higgs bundle}\}
\end{align*}
and let $\mathcal{A}^{SU(2)}_{\mathrm{Higgs},h,F}\subset \mathcal{A}^{SU(2)}_{\mathrm{Conn},F}$ be the subset 
\begin{equation*}
\mathcal{A}^{SU(2)}_{\mathrm{Higgs},h,F}:=\{ D=\nabla_h+\sqrt{-1}\Phi\in \mathcal{A}^{SU(2)}_{\mathrm{Conn}}\,\,|\,\, (\nabla^{0,1}_h+\sqrt{-1}\Phi^{1,0})^2=0\}.
\end{equation*}

\par
Let $(\overline\partial_F,\theta_F)\in A^{SU(2)}_{\mathrm{Higgs},F}$, $\partial_h$ be the $(1,0)$-part of the Chern connection with respect to $\overline\partial_F$, and $\theta^\dagger_{h}$ be the formal adjoint of $\theta_F$ with respect to $h$.
Since $(F,\overline\partial_F,\theta_F)$ is an $SU(2)$-invariant Higgs bundle and $h$ is an $SU(2)$-invariant metric, we have an $SU(2)$-invariant connection
\begin{align*}
D:=&\partial_h+\overline\partial_F+\theta_F+\theta^\dagger_{h}\\
=&\partial_h+\overline\partial_F+\sqrt{-1}(-\sqrt{-1}\theta_F-\sqrt{-1}\theta^\dagger_{h})
\end{align*}
We note that $\partial_h+\overline\partial_F$ is a metric connection and $-\sqrt{-1}\theta_F-\sqrt{-1}\theta^\dagger_{h}$ is a skew-symmetric 1-form with respect to $h$.
Hence, we have a map
\begin{equation*}
\begin{array}{rccc}
\kappa\colon & A^{SU(2)}_{\mathrm{Higgs},F} &\longrightarrow& \mathcal{A}^{SU(2)}_{\mathrm{Higgs},h,F}                   \\
        & \rotatebox{90}{$\in$}&               & \rotatebox{90}{$\in$} \\
        &   (\overline\partial_F,\theta_F)                  & \longmapsto   & D=\partial_h+\overline\partial_F+\sqrt{-1}(-\sqrt{-1}\theta_F-\sqrt{-1}\theta^\dagger_{h})
        \end{array}.
\end{equation*}
$\kappa$ is a bijective map since metric connections $\nabla_h$ and skew-symmetric 1-forms $\Phi$ are determined by its $(0,1)$- and $(1,0)$-part.
Therefore, to study $A^{SU(2)}_{\mathrm{Higgs},F}$ it is enough to study $\mathcal{A}^{SU(2)}_{\mathrm{Higgs},h,F}$
\par

Let $D\in \mathcal{A}^{SU(2)}_{\mathrm{Higgs},h,F}$.
Then by Proposition \ref{prop 4.3}, $D$ has the decomposition

\begin{align*}
D&=\nabla_h+\sqrt{-1}\Phi\\
&=\begin{pmatrix}
p^\ast\nabla_{h_1}& p^\ast \psi\otimes q^\ast \alpha \\
-(p^\ast \psi\otimes q^\ast \alpha)^\ast& p^\ast\nabla_{h_2}\otimes Id +Id\otimes q^\ast\nabla_{h^{(2)}}
\end{pmatrix}
+
\sqrt{-1}
\begin{pmatrix}
p^\ast\Psi_1& -(p^\ast \phi\otimes q^\ast\beta)^\ast\\
p^\ast \phi\otimes q^\ast\beta& p^\ast\Psi_2
\end{pmatrix}
\end{align*}

where $\nabla_h$ is an unitary connection on $F$, $\Phi\in A^1(\mathfrak{u}(F))$, $\nabla_{h_i}$ are unitary connections on $(E_i,h_i)$, $\Psi_i\in A^1(\mathfrak{u}(E_i))$, and $\phi\in A(\mathrm{Hom}(E_1,E_2)),\psi\in A(\mathrm{Hom}(E_2,E_1))$.\par
Since $\alpha\in A^{0,1}(\mathcal{O}_{\mathbb{P}^1}(2))$ and $\beta\in A^{1,0}(\mathcal{O}_{\mathbb{P}^1}(-2))$, $(p^\ast \psi\otimes q^\ast \alpha)^\ast\in A^{1,0}(\mathrm{Hom}(F_2,F_1))$ and $(p^\ast \phi\otimes q^\ast\beta)^\ast\in A^{1,0}(\mathrm{Hom}(F_1,F_2))$.
 \par
 
Therefore the $(0,1)$-part $\nabla_h$ and the $(1,0)$-part of the $\sqrt{-1}\Phi$ can be expressed as 
\begin{align*}
\nabla^{0,1}_h
&=
\begin{pmatrix}
p^\ast\nabla^{0,1}_{h_1}& p^\ast \psi\otimes q^\ast \alpha \\
0& p^\ast\nabla^{0,1}_{h_2}\otimes Id +Id\otimes q^\ast\nabla^{0,1}_{h^{(2)}}
\end{pmatrix}\\
\sqrt{-1}\Phi^{1,0}&=\sqrt{-1}
\begin{pmatrix}
p^\ast\Psi^{1,0}_1& 0\\
p^\ast \phi\otimes q^\ast\beta& p^\ast\Psi^{1,0}_2
\end{pmatrix}
\end{align*}

\begin{proposition}\label{prop 4.5}
$(\nabla^{0,1}_h,\sqrt{-1}\Phi^{1,0})$ defines a Higgs bundle structure on $F$ (i.e. $(\nabla^{0,1}_h+\sqrt{-1}\Phi^{1,0})^2=0$) if and only if the following equations are satisifed 
\begin{itemize}
\item[1.] $(\nabla^{0,1}_{h_1}+\sqrt{-1}\Psi^{1,0}_1)^2=(\nabla^{0,1}_{h_2}+\sqrt{-1}\Psi^{1,0}_2)^2=0$.
\item[2.] $\phi\circ(\nabla^{0,1}_{h_1}+\sqrt{-1}\Psi^{1,0}_1)=(\nabla^{0,1}_{h_2}+\sqrt{-1}\Psi^{1,0}_2)\circ\phi$.
\item[3.] $(\nabla^{0,1}_{h_1}+\sqrt{-1}\Psi^{1,0}_1)\circ\psi=\psi\circ(\nabla^{0,1}_{h_2}+\sqrt{-1}\Psi^{1,0}_2)$.
\item[4.] $\phi\circ\psi=\psi\circ\phi=0$.
\end{itemize}
\end{proposition}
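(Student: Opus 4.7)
The plan is to expand the square $(\nabla^{0,1}_{h}+\sqrt{-1}\Phi^{1,0})^{2}$ according to its $(p,q)$-bi-type on $X\times\mathbb{P}^{1}$ and then read off the conditions block by block from the decomposition $F=F_{1}\oplus F_{2}$. Write $\overline\partial_{F}:=\nabla^{0,1}_{h}$ and $\theta_{F}:=\sqrt{-1}\Phi^{1,0}$; then
\begin{equation*}
(\overline\partial_{F}+\theta_{F})^{2}=\overline\partial_{F}^{2}+(\overline\partial_{F}\theta_{F}+\theta_{F}\overline\partial_{F})+\theta_{F}\wedge\theta_{F},
\end{equation*}
whose three summands are $\mathrm{End}(F)$-valued forms of pure bi-type $(0,2)$, $(1,1)$ and $(2,0)$. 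Hence the whole expression vanishes iff each summand does, and each summand can be computed independently as a $2\times 2$ matrix of operators.

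For $\overline\partial_{F}^{2}$, the two diagonal blocks vanish automatically because they are the Dolbeault operators of the pulled-back holomorphic bundles $p^{\ast}E_{1}$ and $p^{\ast}E_{2}\otimes q^{\ast}\mathcal{O}_{\mathbb{P}^{1}}(2)$; the lower-left block is zero; and the upper-right block, via the graded Leibniz rule together with $\overline\partial(q^{\ast}\alpha)=0$ (immediate because $\alpha\in A^{0,1}(\mathcal{O}_{\mathbb{P}^{1}}(-2))$ already has top anti-holomorphic degree on the curve $\mathbb{P}^{1}$), reduces to $p^{\ast}(\nabla^{0,1}_{h_{1}}\psi-\psi\nabla^{0,1}_{h_{2}})\wedge q^{\ast}\alpha$. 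Since $q^{\ast}\alpha$ is nowhere zero, this vanishes iff the $(0,1)$-part of condition $3$ holds. Symmetrically, $\theta_{F}\wedge\theta_{F}$ has vanishing diagonal and upper-right blocks (using $\Psi^{1,0}_{i}\wedge\Psi^{1,0}_{i}=0$ on the curve $X$), and its lower-left block equals $p^{\ast}(\sqrt{-1}\Psi^{1,0}_{2}\circ\phi-\phi\circ\sqrt{-1}\Psi^{1,0}_{1})\wedge q^{\ast}\beta$ up to a sign, equivalent to the $(1,0)$-part of condition $2$.

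The informative piece is the mixed term $\overline\partial_{F}\theta_{F}+\theta_{F}\overline\partial_{F}$. Its $(1,1)$-block computes to
\begin{equation*}
p^{\ast}\bigl(\nabla^{0,1}_{h_{1}}(\sqrt{-1}\Psi^{1,0}_{1})\bigr)+p^{\ast}(\psi\circ\phi)\otimes q^{\ast}(\alpha\wedge\beta),
\end{equation*}
whose two summands lie respectively in $p^{\ast}A^{1,1}(X)$- and $q^{\ast}A^{1,1}(\mathbb{P}^{1})$-valued endomorphisms, which are linearly independent subspaces of $A^{1,1}(X\times\mathbb{P}^{1})$; using $\alpha\wedge\beta\neq 0$, the block vanishes precisely when $\nabla^{0,1}_{h_{1}}(\sqrt{-1}\Psi^{1,0}_{1})=0$ and $\psi\circ\phi=0$. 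Combined with the automatic $(\nabla^{0,1}_{h_{1}})^{2}=0$ and $\Psi^{1,0}_{1}\wedge\Psi^{1,0}_{1}=0$ on the curve, the first identity gives condition $1$ for $i=1$, and the second is one half of condition $4$. Symmetrically, the $(2,2)$-block gives condition $1$ for $i=2$ together with $\phi\circ\psi=0$. The $(1,2)$-block reduces to $p^{\ast}(\sqrt{-1}\Psi^{1,0}_{1}\circ\psi-\psi\circ\sqrt{-1}\Psi^{1,0}_{2})\wedge q^{\ast}\alpha$, giving the $(1,0)$-part of condition $3$, while the $(2,1)$-block, after invoking $\overline\partial(q^{\ast}\beta)=0$ (immediate from the explicit formula $\beta|_{\mathbb{C}_{z}}=dz\otimes e_{2,z}$ with $e_{2,z}$ holomorphic), equals $p^{\ast}(\nabla^{0,1}_{h_{2}}\phi-\phi\nabla^{0,1}_{h_{1}})\wedge q^{\ast}\beta$, giving the $(0,1)$-part of condition $2$.

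The main technical burden is to carry through the graded-Leibniz computations on the twisted off-diagonal morphisms $p^{\ast}\psi\otimes q^{\ast}\alpha$ and $p^{\ast}\phi\otimes q^{\ast}\beta$ while tracking signs, using at each step the $\overline\partial$-closedness of $q^{\ast}\alpha$ and $q^{\ast}\beta$ together with the independence of the $p^{\ast}$- and $q^{\ast}$-pulled-back pieces of $A^{1,1}(X\times\mathbb{P}^{1})$. Once these bookkeeping points are settled, the four conditions fall out entry by entry from the block decomposition.
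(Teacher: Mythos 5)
Your proposal is correct and follows essentially the same route as the paper's proof: expand $(\overline\partial_F+\theta_F)^2$ into its $(0,2)$, $(1,1)$ and $(2,0)$ pieces, compute each as a $2\times 2$ block matrix with respect to $F=F_1\oplus F_2$, and use that the diagonal $(2,0)/(0,2)$ contributions pulled back from the Riemann surface vanish automatically, together with the everywhere-nonvanishing of $\alpha$, $\beta$ and the nondegeneracy of $\alpha\wedge\beta$, to read off conditions 1--4 entry by entry. The only differences are cosmetic, e.g.\ you make explicit the vanishing $\overline\partial(q^\ast\alpha)=\overline\partial(q^\ast\beta)=0$ that the paper leaves implicit in its matrix computation.
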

\begin{proof}
$(\nabla^{0,1}_h,\sqrt{-1}\Phi^{1,0})$ defines a Higgs bundle structure on $F$ if and only if 
$(\nabla^{0,1}_h)^2=\nabla^{0,1}_h\Phi^{1,0}+\Phi^{1,0}\nabla^{0,1}_h=\Phi^{1,0}\wedge \Phi^{1,0}=0$ holds.
We check that this condition is equivalent to 1., 2., 3., 4. by direct calculation.\par
\begin{align*}
(\nabla^{0,1}_h)^2&=
\begin{pmatrix}
p^\ast\nabla^{0,1}_{h_1}& p^\ast \psi\otimes q^\ast \alpha \\
0& p^\ast\nabla^{0,1}_{h_2}\otimes Id +Id\otimes q^\ast\nabla^{0,1}_{h^{(2)}}
\end{pmatrix}
\begin{pmatrix}
p^\ast\nabla^{0,1}_{h_1}& p^\ast \psi\otimes q^\ast \alpha \\
0& p^\ast\nabla^{0,1}_{h_2}\otimes Id +Id\otimes q^\ast\nabla^{0,1}_{h^{(2)}}
\end{pmatrix}\\
&=
\begin{pmatrix}
(p^\ast\nabla^{0,1}_{h_1})^2& p^\ast\nabla^{0,1}_{h_1}(p^\ast \psi\otimes q^\ast \alpha)+(p^\ast \psi\otimes q^\ast \alpha)p^\ast\nabla^{0,1}_{h_2}\otimes Id +Id\otimes q^\ast\nabla^{0,1}_{h^{(2)}} \\
0& (p^\ast\nabla^{0,1}_{h_2}\otimes Id +Id\otimes q^\ast\nabla^{0,1}_{h^{(2)}})^2
\end{pmatrix}\\
&=
\begin{pmatrix}
(p^\ast\nabla^{0,1}_{h_1})^2& p^\ast(\nabla^{0,1}_{h_1}\psi-\psi\nabla^{0,1}_{h_2})\otimes q^\ast \alpha \\
0& (p^\ast\nabla^{0,1}_{h_2})^2
\end{pmatrix}.
\end{align*}
$(p^\ast\nabla^{0,1}_{h_1})^2=(p^\ast\nabla^{0,1}_{h_2})^2=0$ always hold since $X$ is a Riemann surface and the connection forms of $p^\ast\nabla^{0,1}_{h_i}$ has only diffirential forms of  $X$.
Since $\alpha$ is everywhere a non-zero section of $A^{0,1}(\mathcal{O}_{\mathbb{P}^1}(-2))$, $(\nabla^{0,1}_h)^2=0$ is equivalent to  

\begin{equation}\label{prop 4.5 eq 1}
\nabla^{0,1}_{h_1}\psi-\psi\nabla^{0,1}_{h_2}=0.
\end{equation}
 
\begin{align*}
\Phi^{1,0}\wedge\Phi^{1,0}&=
\begin{pmatrix}
p^\ast\Psi^{1,0}_1& 0\\
p^\ast \phi\otimes q^\ast\beta& p^\ast\Psi^{1,0}_2
\end{pmatrix}
\wedge
\begin{pmatrix}
p^\ast\Psi^{1,0}_1& 0\\
p^\ast \phi\otimes q^\ast\beta& p^\ast\Psi^{1,0}_2
\end{pmatrix}\\
&=
\begin{pmatrix}
p^\ast\Psi^{1,0}_1\wedge p^\ast\Psi^{1,0}_1& 0\\
p^\ast \phi\otimes q^\ast\beta\wedge p^\ast\Psi^{1,0}_1+p^\ast\Psi^{1,0}_2\wedge p^\ast \phi\otimes q^\ast\beta & p^\ast\Psi^{1,0}_2\wedge p^\ast\Psi^{1,0}_2
\end{pmatrix}\\
&=\begin{pmatrix}
p^\ast(\Psi^{1,0}_1\wedge \Psi^{1,0}_1)& 0\\
p^\ast (-\phi\Psi^{1,0}_1+\Psi^{1,0}_2 \phi)\otimes q^\ast\beta & p^\ast(\Psi^{1,0}_2\wedge \Psi^{1,0}_2)
\end{pmatrix}.
\end{align*}
$p^\ast(\Psi^{1,0}_1\wedge \Psi^{1,0}_1)=p^\ast(\Psi^{1,0}_2\wedge \Psi^{1,0}_2)=0$ always hold since $X$ is a Riemann surface.
Since $\beta$ is everywhere a non-zero section of $A^{1,0}(\mathcal{O}_{\mathbb{P}^1}(2))$, $\Phi^{1,0}\wedge\Phi^{1,0}=0$ is equivalent to 

\begin{equation}\label{prop 4.5 eq 2}
-\phi  \Psi^{1,0}_1+\Psi^{1,0}_2 \phi=0.
\end{equation}

\begin{align*}
\nabla_h^{0,1}\Phi^{1,0}&=
\begin{pmatrix}
p^\ast\nabla^{0,1}_{h_1}& p^\ast \psi\otimes q^\ast \alpha \\
0& p^\ast\nabla^{0,1}_{h_2}\otimes Id +Id\otimes q^\ast\nabla^{0,1}_{h^{(2)}}
\end{pmatrix}
\begin{pmatrix}
p^\ast\Psi^{1,0}_1& 0\\
p^\ast \phi\otimes q^\ast\beta& p^\ast\Psi^{1,0}_2
\end{pmatrix}\\
&=\begin{pmatrix}
p^\ast\nabla^{0,1}_{h_1}p^\ast\Psi^{1,0}_1+p^\ast \psi\otimes q^\ast \alpha\wedge p^\ast \phi\otimes q^\ast\beta &p^\ast \psi\otimes q^\ast \alpha\wedge p^\ast\Psi^{1,0}_2 \\
(p^\ast\nabla^{0,1}_{h_2}\otimes Id +Id\otimes q^\ast\nabla^{0,1}_{h^{(2)}})p^\ast \phi\otimes q^\ast\beta& (p^\ast\nabla^{0,1}_{h_2}\otimes Id +Id\otimes q^\ast\nabla^{0,1}_{h^{(2)}}) p^\ast\Psi^{1,0}_2
\end{pmatrix}\\
&=\begin{pmatrix}
p^\ast\nabla^{0,1}_{h_1}p^\ast\Psi^{1,0}_1+p^\ast (\psi\circ\phi)\otimes q^\ast (\alpha\wedge\beta)& p^\ast(-\psi \Psi^{1,0}_2)\otimes q^\ast \alpha\\
(p^\ast\nabla^{0,1}_{h_2}\otimes Id +Id\otimes q^\ast\nabla^{0,1}_{h^{(2)}})p^\ast \phi\otimes q^\ast\beta&(p^\ast\nabla^{0,1}_{h_2}\otimes Id +Id\otimes q^\ast\nabla^{0,1}_{h^{(2)}}) p^\ast\Psi^{1,0}_2
\end{pmatrix}.
\end{align*}

Since $\alpha\in A^{0,1}(\mathcal{O}_{\mathbb{P}^1}(-2)),\beta\in A^{1,0}(\mathcal{O}_{\mathbb{P}^1}(2))$ and $\mathcal{O}_{\mathbb{P}^1}(-2)^\vee=\mathcal{O}_{\mathbb{P}^1}(2)$, $\alpha\wedge\beta$ can be defined and $\alpha\wedge \beta\in A^{1,1}(\mathbb{P}^1)$.
It is clear from the construction of $\alpha$ and $\beta$ that $\alpha\wedge\beta$ is a non-degenerate differential form.
$\beta\wedge \alpha$ is also defined and is non-degenerate.

\begin{align*}
\nabla_h^{0,1}\Phi^{1,0}&=
\begin{pmatrix}
p^\ast\Psi^{1,0}_1& 0\\
p^\ast \phi\otimes q^\ast\beta& p^\ast\Psi^{1,0}_2
\end{pmatrix}
\begin{pmatrix}
p^\ast\nabla^{0,1}_{h_1}& p^\ast \psi\otimes q^\ast \alpha \\
0& p^\ast\nabla^{0,1}_{h_2}\otimes Id +Id\otimes q^\ast\nabla^{0,1}_{h^{(2)}}
\end{pmatrix}\\
&=\begin{pmatrix}
p^\ast\Psi^{1,0}_1p^\ast\nabla^{0,1}_{h_1}& p^\ast\Psi^{1,0}_1\wedge p^\ast \psi\otimes q^\ast \alpha \\
(p^\ast \phi\otimes q^\ast\beta)p^\ast\nabla^{0,1}_{h_1}& p^\ast \phi\otimes q^\ast\beta\wedge p^\ast \psi\otimes q^\ast \alpha + p^\ast\Psi^{1,0}_2(p^\ast\nabla^{0,1}_{h_2}\otimes Id +Id\otimes q^\ast\nabla^{0,1}_{h^{(2)}})
\end{pmatrix}\\
&=\begin{pmatrix}
p^\ast\Psi^{1,0}_1p^\ast\nabla^{0,1}_{h_1}& p^\ast(\Psi^{1,0}_1 \psi)\otimes q^\ast \alpha\\
(p^\ast \phi\otimes q^\ast\beta)p^\ast\nabla^{0,1}_{h_1}& p^\ast (\phi\circ\psi)\otimes q^\ast(\beta\wedge \alpha) + p^\ast\Psi^{1,0}_2(p^\ast\nabla^{0,1}_{h_2}\otimes Id +Id\otimes q^\ast\nabla^{0,1}_{h^{(2)}}).
\end{pmatrix}
\end{align*}

Therefore $\nabla^{0,1}_h\Phi^{1,0}+\Phi^{1,0}\nabla^{0,1}_h=0$ is equivalent to the following four equations
\begin{equation}\label{prop 4.5 eq 3}
\begin{split}
&p^\ast\nabla^{0,1}_{h_1}p^\ast\Psi^{1,0}_1+p^\ast (\psi\circ\phi)\otimes q^\ast (\alpha\wedge\beta)+p^\ast\Psi^{1,0}_1p^\ast\nabla^{0,1}_{h_1}\\
=&p^\ast (\nabla^{0,1}_{h_1}\Psi^{1,0}_1+\Psi^{1,0}_1\nabla^{0,1}_{h_1})+p^\ast (\psi\circ\phi)\otimes q^\ast (\alpha\wedge\beta)\\
=&0,\\
&p^\ast(-\psi \Psi^{1,0}_2)\otimes q^\ast \alpha+p^\ast(\Psi^{1,0}_1 \psi)\otimes q^\ast \alpha\\
=&p^\ast (-\psi \Psi^{1,0}_2+\Psi^{1,0}_1 \psi)\otimes q^\ast \alpha\\
=&0,\\
&(p^\ast\nabla^{0,1}_{h_2}\otimes Id +Id\otimes q^\ast\nabla^{0,1}_{h^{(2)}})p^\ast \phi\otimes q^\ast\beta+(p^\ast \phi\otimes q^\ast\beta)p^\ast\nabla^{0,1}_{h_1}\\
=&p^\ast (\nabla^{0,1}_{h_2}\phi-\phi\nabla^{0,1}_{h_1})\otimes q^\ast \beta\\
=&0,\\
&(p^\ast\nabla^{0,1}_{h_2}\otimes Id +Id\otimes q^\ast\nabla^{0,1}_{h^{(2)}}) p^\ast\Psi^{1,0}_2+p^\ast (\phi\circ\psi)\otimes q^\ast(\beta\wedge \alpha) + p^\ast\Psi^{1,0}_2(p^\ast\nabla^{0,1}_{h_2}\otimes Id +Id\otimes q^\ast\nabla^{0,1}_{h^{(2)}})\\
=&p^\ast (\nabla^{0,1}_{h_2}\Psi^{1,0}_2+\Psi^{1,0}_2\nabla^{0,1}_{h_2})+p^\ast (\phi\circ\psi)\otimes q^\ast (\beta\wedge\alpha)\\
=&0.
\end{split}
\end{equation}

Since $p^\ast (\nabla^{0,1}_{h_i}\Psi^{1,0}_2+\Psi^{1,0}_2\nabla^{0,1}_{h_i})$ only contains the differential form of $X$ and $\alpha\wedge\beta$ is a non-degenerate (1,1)-form on $\mathbb{P}^1$, the first and fouth equation of (\ref{prop 4.5 eq 3}) is equivalent to 
\begin{align*}
&\nabla^{0,1}_{h_i}\Psi^{1,0}_i+\Psi^{1,0}_i\nabla^{0,1}_{h_i}=0\,\,(i=1,2),\\
&\phi\circ\psi=\psi\circ\phi=0.
\end{align*}
Recall that $\alpha$ and $\beta$ are everywhere non-zero. Hence, the second and the third equations of (\ref{prop 4.5 eq 3}) are equivalent to 
\begin{align*}
&-\psi \Psi^{1,0}_2+\Psi^{1,0}_1 \psi=0,\\
&\nabla^{0,1}_{h_2}\phi-\phi\nabla^{0,1}_{h_1}=0.
\end{align*}
Hence combining these equations , (\ref{prop 4.5 eq 1}), and  (\ref{prop 4.5 eq 2}), $(\nabla^{0,1}_h+\sqrt{-1}\Phi^{1,0})^2=0$ is equivalent to 
\begin{align*}
&(\nabla^{0,1}_{h_1}+\sqrt{-1}\Psi^{1,0}_1)^2=(\nabla^{0,1}_{h_2}+\sqrt{-1}\Psi^{1,0}_2)^2=0,\\
&\phi\circ(\nabla^{0,1}_{h_1}+\sqrt{-1}\Psi^{1,0}_1)=(\nabla^{0,1}_{h_2}+\sqrt{-1}\Psi^{1,0}_2)\circ\phi,\\
&(\nabla^{0,1}_{h_1}+\sqrt{-1}\Psi^{1,0}_1)\circ\psi=\psi\circ(\nabla^{0,1}_{h_2}+\sqrt{-1}\Psi^{1,0}_2),\\
&\phi\circ\psi=\psi\circ\phi=0.
\end{align*}
We proved the claim.
\end{proof}
Recall that in Section \ref{sec2}, we defined a subset $N\subset M$ as 
\begin{align*}
N=
\left\{
(\nabla_{h_1}, \Phi_1, \nabla_{h_2}, \Phi_2, \phi, \psi) \in M \ \middle| \
\begin{aligned}
&1.\quad (\nabla^{0,1}_i + \Phi^{1,0}_i)^2 = 0 \quad (i = 1,2) \\
&2.\quad \phi \circ (\nabla^{0,1}_1 + \Phi^{1,0}_1) = (\nabla^{0,1}_2 + \Phi^{1,0}_2) \circ \phi \\
&3.\quad (\nabla^{0,1}_1 + \Phi^{1,0}_1) \circ \psi = \psi \circ (\nabla^{0,1}_2 + \Phi^{1,0}_2) \\
&4.\quad \phi \circ \psi = \psi \circ \phi = 0
\end{aligned}
\right\}.
\end{align*}
Let $\iota:\mathcal{A}^{SU(2)}_{\mathrm{Conn},F}\to M$ be the map we defined in the previous section.
The following immediately follows from Proposition \ref{prop 4.5}.

\begin{proposition}\label{prop 4.6}
The restricition of $\iota_h$  to $\mathcal{A}^{SU(2)}_{\mathrm{Higgs},h}$ induces bijective maps
\begin{align*}
\iota_h&: \mathcal{A}^{SU(2)}_{\mathrm{Higgs},h,F}\to N,\\
\iota_h\circ\kappa&: \mathcal{A}^{SU(2)}_{\mathrm{Higgs},F}\to N.
\end{align*}

\end{proposition}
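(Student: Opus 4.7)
The plan is to obtain Proposition \ref{prop 4.6} as a direct corollary of the preceding results, with essentially no new computation required. By Proposition \ref{prop 4.4}, the map $\iota_h\colon \mathcal{A}^{SU(2)}_{\mathrm{Conn},F}\to M$ is already a bijection, so the first statement will follow if I can show that $\iota_h$ carries the subset $\mathcal{A}^{SU(2)}_{\mathrm{Higgs},h,F}\subset \mathcal{A}^{SU(2)}_{\mathrm{Conn},F}$ onto the subset $N\subset M$.

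To verify this, I would unwind the definitions on both sides. On the source side, an element of $\mathcal{A}^{SU(2)}_{\mathrm{Higgs},h,F}$ is an $SU(2)$-invariant connection $D=\nabla_h+\sqrt{-1}\Phi$ satisfying $(\nabla_h^{0,1}+\sqrt{-1}\Phi^{1,0})^2=0$. On the target side, $N$ consists of tuples $(\nabla_{h_1},\Psi_1,\nabla_{h_2},\Psi_2,\phi,\psi)\in M$ satisfying the four integrability and intertwining conditions together with $\phi\circ\psi=\psi\circ\phi=0$. Proposition \ref{prop 4.5} is precisely the statement that, for $D$ written in its block-matrix form from Proposition \ref{prop 4.3}, the Higgs integrability condition on $D$ is equivalent to these four conditions on $(\nabla_{h_1},\Psi_1,\nabla_{h_2},\Psi_2,\phi,\psi)=\iota_h(D)$. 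Hence $\iota_h^{-1}(N)=\mathcal{A}^{SU(2)}_{\mathrm{Higgs},h,F}$, and the restriction $\iota_h\colon \mathcal{A}^{SU(2)}_{\mathrm{Higgs},h,F}\to N$ is bijective.

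For the second map, I would simply invoke the bijectivity of $\kappa\colon A^{SU(2)}_{\mathrm{Higgs},F}\to \mathcal{A}^{SU(2)}_{\mathrm{Higgs},h,F}$, which was observed in the paragraph just before Proposition \ref{prop 4.5}: a metric connection and a skew-symmetric one-form are uniquely recovered from their $(0,1)$- and $(1,0)$-parts, so $(\overline\partial_F,\theta_F)\mapsto \partial_h+\overline\partial_F+\sqrt{-1}(-\sqrt{-1}\theta_F-\sqrt{-1}\theta^\dagger_h)$ is a bijection with inverse $D\mapsto (\nabla_h^{0,1},\sqrt{-1}\Phi^{1,0})$. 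Composing the two bijections yields the bijection $\iota_h\circ\kappa\colon A^{SU(2)}_{\mathrm{Higgs},F}\to N$. There is no real obstacle here; all the analytical content is already contained in Propositions \ref{prop 4.3}, \ref{prop 4.4}, and \ref{prop 4.5}, and this proposition is merely the packaging of those results into the equivalence between invariant Higgs data upstairs and quadruplet data downstairs.
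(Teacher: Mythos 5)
Your proposal is correct and matches the paper's own treatment: the paper states Proposition \ref{prop 4.6} as an immediate consequence of Proposition \ref{prop 4.5}, combined with the bijectivity of $\iota_h$ from Proposition \ref{prop 4.4} and of $\kappa$ noted just before Proposition \ref{prop 4.5}, which is exactly the packaging you describe.
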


\subsection{Dimensional Reduction and the Doubly-Coupled Vortex Equation}

Recall that $(E_1,h_1)$ and $(E_2,h_2)$ are hermitian vector bundles over the Riemann surface $(X,\omega)$ of rank $\mathrm{rank}E_1=r_1, \mathrm{rank}E_2=r_2$.
The vector bundle $F$  is the bundle over $X\times \mathbb{P}^1$ defined as 

\begin{equation*}
F=F_1\oplus F_2=p^\ast E_1\oplus p^\ast E_2\otimes q^\ast\mathcal{O}_{\mathbb{P}^1}(2)
\end{equation*}
with the Hermitian metric

\begin{equation*}
h=h_{F_1}\oplus h_{F_2}=p^\ast h_1\oplus p^\ast h_2\otimes q^\ast h^{(2)}.
\end{equation*}

Here $h^{(2)}$ is the $SU(2)$-invariant metric on $\mathcal{O}_{\mathbb{P}^1}(2)$ which we construced in Section \ref{sec2}.\par
Let $D\in \mathcal{A}^{SU(2)}_{\mathrm{Higgs},h,F}$. 
In this section, we show that if $D$ is a Hermitian-Einstein connection for the metric $h$ and for certain K\"ahler forms on $X\times\mathbb{P}^1$, $\iota_h(D)\in N$ satisfies the doubly-coupled vortex equation and vice versa.\par
We now fix $\sigma\in\mathbb{R}_{>0}$.
Let 

\begin{equation}\label{sec 4.3 eq 1}
\tau:=\frac{\mathrm{deg}E_1+\mathrm{deg}E_2+\sigma\cdot r_2}{r_1+r_2}
\end{equation}

and let 
\begin{equation*}
\tau'=-\frac{1}{r_2}\cdot(r_1\cdot\tau-\mathrm{deg}E_1-\mathrm{deg}E_2)
\end{equation*}
\par
Under the above assumption, we define an $SU(2)$-invariant K\"ahler metric on $X\times\mathbb{P}^1$ whose K\"ahler form is 

\begin{equation*}
\Omega_\sigma=\bigg(\frac{\sigma}{2}p^\ast \omega\bigg)\oplus  q^\ast\omega_{\mathbb{P}^1}
\end{equation*}

where $\omega_{\mathbb{P}^1}$ is the Fubini-Study K\"ahler metric such that $\int_{\mathbb{P}^1}\omega_{\mathbb{P}^1}=1$. 
We also normalize $\omega$ so that $\int_X\omega=1$.

\begin{lemma}\label{lem 4.1}
Let $V_1, V_2$ be complex vector bundles over $X$ of $\mathrm{rank}\,V_2=r_2$.
Let $G$ be the complex vector bundle over $X\times \mathbb{P}^1$ defined as
\begin{equation*}
G=p^\ast V_1\oplus p^\ast V_2\otimes q^\ast \mathcal{O}_{\mathbb{P}^1}(2).
\end{equation*}
Then the degree of $G$ with respect to the K\"ahler form $\Omega_\sigma=(\frac{\sigma}{2} p^\ast\omega)\oplus q^\ast\omega_{\mathbb{P}^1}$ is 
\begin{equation*}
\mathrm{deg}G=\mathrm{deg}V_1+\mathrm{deg}V_2+\sigma r_2.
\end{equation*}
\begin{proof}
Recall that 
\begin{equation*}
\mathrm{deg}G=\int_{X\times \mathbb{P}^1}c_1(G)\wedge \Omega_\sigma.
\end{equation*}
We also have 
\begin{equation*}
c_1(G)=c_1(p^\ast V_1)+c_1(p^\ast V_2)+r_2\cdot c_1(q^\ast \mathcal{O}_{\mathbb{P}^1}(2)).
\end{equation*}
Since we assumed $\int_X\omega=\int_{\mathbb{P}^1}\omega_{\mathbb{P}^1}=1$, we have
\begin{align*}
\mathrm{deg}G&=\int_{X\times \mathbb{P}^1}c_1(G)\wedge \Omega_\sigma\\
&=\int_{X\times \mathbb{P}^1}(c_1(p^\ast V_1)+c_1(p^\ast V_2)+r_2\cdot c_1(q^\ast \mathcal{O}_{\mathbb{P}^1}(2)))\wedge \Omega_\sigma\\
&=\int_{X\times \mathbb{P}^1}c_1(p^\ast V_1)\wedge \Omega_\sigma+\int_{X\times \mathbb{P}^1}c_1(p^\ast V_2)\wedge \Omega_\sigma+\int_{X\times \mathbb{P}^1}r_2\cdot c_1(q^\ast \mathcal{O}_{\mathbb{P}^1}(2))\wedge \Omega_\sigma\\
&=\int_Xc_1(V_1)+\int_Xc_1(V_2)+r_2\cdot\frac{\sigma}{2}\int_{\mathbb{P}^1}c_1( \mathcal{O}_{\mathbb{P}^1}(2))\\
&=\mathrm{deg}V_1+\mathrm{deg}V_2+r_2\cdot\sigma.
\end{align*}
\end{proof}
\end{lemma}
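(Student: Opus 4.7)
The plan is to reduce the computation of $\deg G$ to a sum of three elementary integrals over $X\times\mathbb{P}^1$ by first expanding $c_1(G)$ and then applying a Fubini-type argument on the product.

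First I would write
\begin{equation*}
c_1(G) = c_1(p^\ast V_1) + c_1(p^\ast V_2\otimes q^\ast\mathcal{O}_{\mathbb{P}^1}(2)),
\end{equation*}
using additivity of $c_1$ under direct sums. Then I would apply the standard identity $c_1(V\otimes L) = c_1(V) + (\mathrm{rank}\,V)\,c_1(L)$ for a line bundle $L$ together with naturality of Chern classes under pullback, giving
\begin{equation*}
c_1(G) = p^\ast c_1(V_1) + p^\ast c_1(V_2) + r_2\, q^\ast c_1(\mathcal{O}_{\mathbb{P}^1}(2)).
\end{equation*}
Plugging this into $\deg G = \int_{X\times\mathbb{P}^1} c_1(G)\wedge \Omega_\sigma$ splits the calculation into three pieces.

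Next I would analyze each piece using the bidegree structure of the product. Writing $\Omega_\sigma = \tfrac{\sigma}{2}\,p^\ast\omega + q^\ast\omega_{\mathbb{P}^1}$, I observe that any term of the form $p^\ast\eta \wedge p^\ast\omega$ vanishes because $\eta$ is a $2$-form on the Riemann surface $X$ and there are no nonzero $4$-forms on $X$; similarly $q^\ast\zeta\wedge q^\ast\omega_{\mathbb{P}^1}$ vanishes on $\mathbb{P}^1$. Hence only the cross terms survive, and by the projection formula (Fubini on the product)
\begin{equation*}
\int_{X\times\mathbb{P}^1} p^\ast c_1(V_i)\wedge \Omega_\sigma = \Bigl(\int_X c_1(V_i)\Bigr)\Bigl(\int_{\mathbb{P}^1}\omega_{\mathbb{P}^1}\Bigr),\quad \int_{X\times\mathbb{P}^1} q^\ast c_1(\mathcal{O}_{\mathbb{P}^1}(2))\wedge\Omega_\sigma = \frac{\sigma}{2}\Bigl(\int_X\omega\Bigr)\Bigl(\int_{\mathbb{P}^1}c_1(\mathcal{O}_{\mathbb{P}^1}(2))\Bigr).
\end{equation*}

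Finally I would substitute the normalizations $\int_X\omega = \int_{\mathbb{P}^1}\omega_{\mathbb{P}^1} = 1$ and $\int_{\mathbb{P}^1} c_1(\mathcal{O}_{\mathbb{P}^1}(2)) = 2$, which yields $\deg V_1 + \deg V_2 + r_2\cdot \tfrac{\sigma}{2}\cdot 2 = \deg V_1 + \deg V_2 + \sigma r_2$. There is no genuine obstacle in this lemma; the only point requiring mild care is keeping track of which wedge products vanish for dimensional reasons on the factors $X$ and $\mathbb{P}^1$, so that the only contributions to the integral come from the terms that pair the $X$-component of $c_1(G)$ with $q^\ast\omega_{\mathbb{P}^1}$ and the $\mathbb{P}^1$-component with $\tfrac{\sigma}{2} p^\ast\omega$.
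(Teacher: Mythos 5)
Your proposal is correct and follows essentially the same route as the paper: expand $c_1(G)$ via additivity and the formula $c_1(V\otimes L)=c_1(V)+(\mathrm{rank}\,V)\,c_1(L)$, then integrate against $\Omega_\sigma$ term by term, with the normalizations $\int_X\omega=\int_{\mathbb{P}^1}\omega_{\mathbb{P}^1}=1$ and $\int_{\mathbb{P}^1}c_1(\mathcal{O}_{\mathbb{P}^1}(2))=2$ giving $\deg V_1+\deg V_2+\sigma r_2$. The only difference is that you spell out explicitly which wedge products vanish for dimension reasons, a point the paper leaves implicit.
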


\begin{proposition}\label{prop 4.7}
Let $D=\nabla_h+\sqrt{-1}\Phi\in \mathcal{A}^{SU(2)}_{\mathrm{Higgs},h,F}$.
Suppose $\sigma$ and $\tau$ are related as in (\ref{sec 4.3 eq 1}).
 Then the following conditions are equivalent.
\begin{itemize}
\item[1.] $(F,\nabla^{0,1}_h,\sqrt{-1}\Phi^{1,0})$ is polystable with respect to the $SU(2)$-action and the K\"ahler metric $\Omega_\sigma$.
\item[2.] $(F,\nabla^{0,1}_h,\sqrt{-1}\Phi^{1,0},h)$ is Hermitian-Einstein with respect to the K\"ahler metric $\Omega_\sigma$.
\item[3.] $\iota_h(D)=(\nabla_{h_1}, \Phi_1, \nabla_{h_2}, \Phi_2, \phi, \psi)$ is a solution of the doubly-coupled $\tau$-vortex equation.
\end{itemize}
\end{proposition}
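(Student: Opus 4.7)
The plan is to split the proof into $(1) \Leftrightarrow (2)$, handled by Simpson's Kobayashi-Hitchin correspondence with group action, and $(2) \Leftrightarrow (3)$, handled by a block-matrix computation of the Hermitian-Einstein equation on $F = F_1 \oplus F_2$.

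For $(1) \Leftrightarrow (2)$, I would apply Theorem \ref{thm 2.1} to the $SU(2)$-invariant Higgs bundle $(F, \overline\partial_F, \theta_F)$ on the K\"ahler manifold $(X \times \mathbb{P}^1, \Omega_\sigma)$. The hypotheses are routine to check: the $SU(2)$-action preserves $\Omega_\sigma$ (trivially on $X$ and by Fubini-Study isometries on $\mathbb{P}^1$), preserves the metric $h$ by Proposition \ref{prop 4.1}, and fixes $\theta_F$, which is the trivial homothety. One direction gives immediately that HE implies polystable; the other uses Theorem \ref{thm 2.1} to produce an $SU(2)$-invariant HE metric from polystability, and Proposition \ref{prop 4.1} ensures any such metric has the required split form $p^* h_1' \oplus (p^* h_2' \otimes q^* h^{(2)})$ so as to fit the framework.

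For $(2) \Leftrightarrow (3)$, the approach is to expand $\Lambda_{\Omega_\sigma}(F_h + [\theta_F, \theta_F^\dagger])$ as a $2 \times 2$ block operator on $F = F_1 \oplus F_2$, using the explicit decompositions provided by Propositions \ref{prop 4.3} and \ref{prop 4.5}. Since $\Omega_\sigma = (\sigma/2) p^* \omega + q^* \omega_{\mathbb{P}^1}$ is a product K\"ahler form, the contraction $\Lambda_{\Omega_\sigma}$ splits on bi-degree $(1,1)$ components into the sum of contractions along the $X$- and $\mathbb{P}^1$-factors, so the two directions can be treated independently. The off-diagonal blocks should vanish as a direct consequence of the relations built into the definition of $N$ (in particular $\phi \circ \psi = \psi \circ \phi = 0$ together with the compatibility of $\phi, \psi$ with the $\overline\partial$- and Higgs-field operators). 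The $(1,1)$-block then combines $p^* \Lambda_\omega(F_{h_1} + [\theta_1, \theta_1^\dagger])$ with the contribution of $A \wedge A^*$ and $B^* \wedge B$, which after contracting against $\omega_{\mathbb{P}^1}$ yield scalar multiples of $\psi \circ \psi^*$ and $\phi^* \circ \phi$ respectively; an analogous expression holds for the $(2,2)$-block. Equating to $\lambda \cdot \mathrm{Id}_F$ should split into the two desired vortex equations.

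The main obstacle will be carrying through the numerical constants. Concretely, one must evaluate $\Lambda_{\omega_{\mathbb{P}^1}}(\alpha \wedge \alpha^*)$ and $\Lambda_{\omega_{\mathbb{P}^1}}(\beta \wedge \beta^*)$ from the local expressions for $\alpha, \beta$ given in the proof of Proposition \ref{prop 2.1}, compute the topological contribution $\Lambda_{\omega_{\mathbb{P}^1}} F_{h^{(2)}}$ coming from $c_1(\mathcal{O}_{\mathbb{P}^1}(2)) = 2$, and match everything against the Chern-Weil value of the HE constant $\lambda$. By Lemma \ref{lem 4.1}, $\mu_{\Omega_\sigma}(F) = (\deg E_1 + \deg E_2 + \sigma r_2)/(r_1 + r_2)$, which equals $\tau$ precisely under the normalization (\ref{sec 4.3 eq 1}); hence $\lambda = -2\pi\sqrt{-1}\tau$ (up to standard sign conventions), and after absorbing the contraction constants the $(1,1)$-block becomes exactly the first doubly-coupled $\tau$-vortex equation. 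The $(2,2)$-block produces the second equation, and the trace identity derived in Section \ref{sec 3} forces its constant to be precisely the required $\tau'$, completing the argument.
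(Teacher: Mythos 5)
Your overall route is the same as the paper's: the equivalence of 1 and 2 is Simpson's Theorem \ref{thm 2.1} for the $SU(2)$-action (with Proposition \ref{prop 4.1} supplying the split form of an invariant Hermitian--Einstein metric), and the equivalence of 2 and 3 is the block computation of $\Lambda_{\Omega_\sigma}F_D^{(1,1)}$ on $F=F_1\oplus F_2$, with the diagonal blocks giving the two vortex equations and the constants matched through Lemma \ref{lem 4.1} and the trace relation between $\tau$ and $\tau'$. One small numerical caveat: since $\mathrm{Vol}_{\Omega_\sigma}(X\times\mathbb{P}^1)=\sigma/2$, the Einstein constant is $\lambda=-\frac{4\pi\sqrt{-1}}{\sigma}\,\mu_{\Omega_\sigma}(F)=-\frac{4\pi\sqrt{-1}}{\sigma}\tau$ rather than $-2\pi\sqrt{-1}\tau$; the factor $2/\sigma$ is exactly what compensates the contraction along $\frac{\sigma}{2}\omega$, so your ``absorbing the contraction constants'' does close, but it has to be tracked explicitly, as the paper does in equations of the form $\frac{\sigma}{2}\lambda=-2\pi\sqrt{-1}\tau$ and $\frac{\sigma}{2}(\lambda+4\pi\sqrt{-1})=-2\pi\sqrt{-1}\tau'$.

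The one step whose justification would fail is your treatment of the off-diagonal blocks. You claim they vanish ``as a direct consequence of the relations built into the definition of $N$,'' in particular $\phi\circ\psi=\psi\circ\phi=0$ and the holomorphicity of $\phi,\psi$. Those relations are precisely the integrability conditions of Proposition \ref{prop 4.5}: they are what puts $D$ in $\mathcal{A}^{SU(2)}_{\mathrm{Higgs},h,F}$, i.e.\ they give $(\nabla_h\Phi)^{(1,1)}=0$ and $F_D^{(1,1)}=F_{\nabla_h}-(\Phi\wedge\Phi)^{(1,1)}$, but they say nothing about $\widetilde\nabla^{1,0}\psi$ or about the off-diagonal curvature, which (up to signs and factors of $\sqrt{-1}$) reads $p^\ast(\widetilde\nabla^{1,0}\psi)\wedge q^\ast\alpha+p^\ast\Psi_1^{1,0}\wedge(p^\ast\phi\otimes q^\ast\beta)^\ast+(p^\ast\phi\otimes q^\ast\beta)^\ast\wedge p^\ast\Psi_2^{1,0}+p^\ast\psi\otimes q^\ast\big((\nabla_{h^{(-2)}}\alpha)^{(1,1)}\big)$ and is in general a nonzero $(1,1)$-form. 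What vanishes is its image under $\Lambda_{\Omega_\sigma}$: the first three terms have one leg along $X$ and one along $\mathbb{P}^1$, and contraction against the product form $\frac{\sigma}{2}p^\ast\omega\oplus q^\ast\omega_{\mathbb{P}^1}$ annihilates such mixed components --- this is the same splitting of $\Lambda_{\Omega_\sigma}$ you invoke for the diagonal blocks, and it is exactly the paper's argument --- while the purely $\mathbb{P}^1$ term drops out either by the explicit local formula for $\alpha$ or because its contraction would be an $SU(2)$-invariant section of $\mathcal{O}_{\mathbb{P}^1}(-2)$, excluded by Proposition \ref{prop 2.1}. With the off-diagonal vanishing attributed to bidegree rather than to the algebraic relations in $N$, your argument for the equivalence of 2 and 3 goes through as outlined.
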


\begin{proof}
The first two equivalences follow from the result of Simpson. We prove the equivalence of $2$ and $3.$\par
Let $D=\nabla_h+\sqrt{-1}\Phi\in \mathcal{A}^{SU(2)}_{\mathrm{Higgs},h,F}$. 
Let $F_D$ be the curvature of $D$.
Since we deal with the Hermitian-Einstein metric, we are only interested in $F^{(1,1)}_D$, which is the $(1,1)$-part of $F_D$.
We first calculate $F^{(1,1)}_D$.
Since $D\in \mathcal{A}^{SU(2)}_{\mathrm{Higgs},h}$, we have 
\begin{equation*}
F^{(1,1)}_D=F_{\nabla_h}-(\Phi\wedge\Phi)^{(1,1)}.
\end{equation*}
Here $F_{\nabla_h}$ is the curvature of $\nabla_h$ and $(\Phi\wedge\Phi)^{(1,1)}$ is the $(1,1)$-part of $\Phi\wedge\Phi$.
We first calculate $F_{\nabla_h}$.
\begin{align*}
&F_{\nabla_h}=(\nabla_h)^2\\
=&
\begin{pmatrix}
p^\ast\nabla_{h_1}& p^\ast \psi\otimes q^\ast \alpha \\
-(p^\ast \psi\otimes q^\ast \alpha)^\ast& p^\ast\nabla_{h_2}\otimes Id +Id\otimes q^\ast\nabla_{h^{(2)}}
\end{pmatrix}^2\\
=&\begin{pmatrix}
p^\ast F_{\nabla_{h_1}}-p^\ast \psi\otimes q^\ast \alpha\wedge(p^\ast \psi\otimes q^\ast \alpha)^\ast & \widetilde{\nabla}_{\mathrm{Hom}(F_2,F_1)}(p^\ast \psi\otimes q^\ast \alpha)\\
-\widetilde{\nabla}_{\mathrm{Hom}(F_1,F_2)}(p^\ast \psi\otimes q^\ast \alpha)^\ast& (p^\ast\nabla_{h_2}\otimes Id +Id\otimes q^\ast\nabla_{h^{(2)}})^2-(p^\ast \psi\otimes q^\ast \alpha)^\ast\wedge p^\ast \psi\otimes q^\ast \alpha
\end{pmatrix}\\
=&
\begin{pmatrix}
p^\ast F_{\nabla_{h_1}}-p^\ast \psi\otimes q^\ast \alpha\wedge(p^\ast \psi\otimes q^\ast \alpha)^\ast & \widetilde{\nabla}_{\mathrm{Hom}(F_2,F_1)}(p^\ast \psi\otimes q^\ast \alpha)\\
-\widetilde{\nabla}_{\mathrm{Hom}(F_1,F_2)}(p^\ast \psi\otimes q^\ast \alpha)^\ast& p^\ast F_{\nabla_{h_2}}+q^\ast(\overline\partial_{\mathbb{P}^1}((h^{(2)})^{-1}\partial_{\mathbb{P}^1}h^{(2)}))\otimes Id_{F_2}-(p^\ast \psi\otimes q^\ast \alpha)^\ast\wedge p^\ast \psi\otimes q^\ast \alpha
\end{pmatrix}.
\end{align*}
Here $F_{\nabla_{h_i}}$ are the curvatures of $\nabla_{h_i}$, $\widetilde{\nabla}_{\mathrm{Hom}(F_1,F_2)}$ (resp. $\widetilde{\nabla}_{\mathrm{Hom}(F_2,F_1)})$ is the connection of $\mathrm{Hom}(F_1,F_2)$ (resp. $\mathrm{Hom}(F_2,F_1)$) induced by the connections $\nabla_{h_i}$, and $\overline\partial_{\mathbb{P}^1}((h^{(2)})^{-1}\partial_{\mathbb{P}^1}h^{(2)})$ is the curvature of $\nabla_{h^{(2)}}$.\par
Recall that $(p^\ast \psi\otimes q^\ast \alpha)^\ast$ is the formal adjoint of $p^\ast \psi\otimes q^\ast \alpha$ with respect to the metric $h_{F_1}$ and $h_{F_2}$. Therefore 
\begin{equation*}
(p^\ast \psi\otimes q^\ast \alpha)^\ast=h^{-1}_{F_2} \overline{p^\ast\psi\otimes q^\ast \alpha}^Th_{F_1}.
\end{equation*}
Here $\overline{p^\ast\psi\otimes q^\ast \alpha}^T$ is the complex conjugate of $p^\ast\psi\otimes q^\ast \alpha$.
Since $h_{F_2}=p^\ast h_2\otimes q^\ast h^{(2)}$, after multipliying some constants to $\alpha$ we have 
\begin{align*}
p^\ast \psi\otimes q^\ast \alpha\wedge(p^\ast \psi\otimes q^\ast \alpha)^\ast &=\frac{2\sqrt{-1}}{\sigma} p^\ast (\psi\circ\psi^\ast)\otimes q^\ast\omega_{\mathbb{P}^1},\\
(p^\ast \psi\otimes q^\ast \alpha)^\ast\wedge p^\ast \psi\otimes q^\ast \alpha&=-\frac{2\sqrt{-1}}{\sigma} p^\ast (\psi^\ast\circ\psi)\otimes q^\ast\omega_{\mathbb{P}^1}.
\end{align*}
We next calculate $\Phi\wedge\Phi.$
\begin{align*}
&\Phi\wedge\Phi=
\begin{pmatrix}
p^\ast\Psi_1& -(p^\ast \phi\otimes q^\ast\beta)^\ast\\
p^\ast \phi\otimes q^\ast\beta& p^\ast\Psi_2
\end{pmatrix}^2\\
=&
\begin{pmatrix}
p^\ast(\Psi_1\wedge \Psi_1)-(p^\ast \phi\otimes q^\ast\beta)^\ast\wedge p^\ast \phi\otimes q^\ast\beta& -p^\ast\Psi_1\wedge(p^\ast \phi\otimes q^\ast\beta)^\ast-(p^\ast \phi\otimes q^\ast\beta)^\ast\wedge p^\ast\Psi_2\\
p^\ast \phi\otimes q^\ast\beta\wedge p^\ast\Psi_1+p^\ast\Psi_2 \wedge p^\ast \phi\otimes q^\ast\beta& -p^\ast \phi\otimes q^\ast\beta\wedge(p^\ast \phi\otimes q^\ast\beta)^\ast+ p^\ast(\Psi_2\wedge \Psi_2)
\end{pmatrix}.
\end{align*}
By the same argument as above, after multiplying some constants to $\beta$, we have 
\begin{align*}
(p^\ast \phi\otimes q^\ast\beta)^\ast\wedge p^\ast \phi\otimes q^\ast\beta&=\frac{2\sqrt{-1}}{\sigma} p^\ast (\phi^\ast\circ\phi)\otimes q^\ast\omega_{\mathbb{P}^1},\\
p^\ast \phi\otimes q^\ast\beta\wedge(p^\ast \phi\otimes q^\ast\beta)^\ast &=-\frac{2\sqrt{-1}}{\sigma} p^\ast (\phi\circ\phi^\ast)\otimes q^\ast\omega_{\mathbb{P}^1}.
\end{align*}
Suppose now $(F,\nabla^{0,1}_h,\sqrt{-1}\Phi^{1,0},h)$ is Hermitian-Einstein with respect to $\Omega_\sigma$. This is equivalent to 
\begin{align*}
\Lambda_{\sigma}(F_{\nabla_h}-\Phi\wedge\Phi)=\lambda Id_{F}.
\end{align*}
Here $\Lambda_\sigma$ is the contraction with respect to $\Omega_\sigma$.
Also by \cite[Lemma 2.1.8]{LT}, $\lambda$ is the constant such that 

\begin{equation*}
\lambda=-\frac{2\pi\sqrt{-1}}{\mathrm{Vol}_{\Omega_\sigma}(X\times \mathbb{P}^1)}\frac{\mathrm{deg}_\sigma F}{\mathrm{rank}F}.
\end{equation*}
Here $\mathrm{Vol}_{\Omega_\sigma}(X\times \mathbb{P}^1)$ and $\mathrm{deg}_\sigma F$ is the volume of $X\times \mathbb{P}^1$ and the degree of $F$ with respect to the K\"ahler form $\Omega_\sigma$.
The Hermitian-Einstein equation implies 
\begin{equation}\label{prop 4.7 eq 1}
\left\{\,
\begin{aligned}
&\Lambda_\sigma\bigg(p^\ast F_{\nabla_{h_1}}-p^\ast(\Psi_1\wedge \Psi_1)-\frac{2\sqrt{-1}}{\sigma}p^\ast (\phi^\ast\circ\phi)\otimes q^\ast\omega_{\mathbb{P}^1}-\frac{2\sqrt{-1}}{\sigma} p^\ast (\psi\circ\psi^\ast)\otimes q^\ast\omega_{\mathbb{P}^1}\bigg)=\lambda Id_{F_1}\\
&
\begin{split}
\Lambda_\sigma\bigg(p^\ast F_{\nabla_{h_2}}+q^\ast(\overline\partial_{\mathbb{P}^1}((h^{(2)})^{-1}&\partial_{\mathbb{P}^1}h^{(2)}))\otimes Id_{F_2}-p^\ast(\Psi_2\wedge \Psi_2)\\
&+\frac{2\sqrt{-1}}{\sigma} p^\ast (\phi\circ\phi^\ast)\otimes q^\ast\omega_{\mathbb{P}^1}+\frac{2\sqrt{-1}}{\sigma} p^\ast (\psi^\ast\circ\psi)\otimes q^\ast\omega_{\mathbb{P}^1}\bigg)=\lambda Id_{F_1}
\end{split}
\end{aligned}
\right.
\end{equation}
Since $\Omega_\sigma=(\frac{\sigma}{2}p^\ast\omega)\oplus\sigma q^\ast\omega_{\mathbb{P}^1}$, this equation is equivalent to
\begin{equation}\label{prop 4.7 eq 2}
\left\{\,
\begin{aligned}
&\frac{2}{\sigma}\Lambda\bigg( F_{\nabla_{h_1}}-\Psi_1\wedge \Psi_1\bigg)-\frac{2\sqrt{-1}}{\sigma}\phi^\ast\circ\phi-\frac{2\sqrt{-1}}{\sigma}\psi\circ\psi^\ast=\lambda Id_{E_1},\\
&\frac{2}{\sigma}\Lambda\bigg( F_{\nabla_{h_2}}-\Psi_2\wedge \Psi_2\bigg)+\frac{2\sqrt{-1}}{\sigma} \phi\circ\phi^\ast+\frac{2\sqrt{-1}}{\sigma}\psi^\ast\circ\psi-4\pi\sqrt{-1}\cdot Id_{E_2}=\lambda Id_{E_2}.
\end{aligned}
\right.
\end{equation}
To show that $\iota_h(D)=(\nabla_{h_1}, \Phi_1, \nabla_{h_2}, \Phi_2, \phi, \psi)$ is a solution of the doubly-coupled $\tau$-vortex equation, we need to verify the constants $\lambda, \tau$,  and $\tau'$ satisifies the relation

\begin{equation}\label{prop 4.7 eq 3}
\left\{\,
\begin{aligned}
&\frac{\sigma}{2}\cdot\lambda=-2\pi\sqrt{-1}\tau,\\
&\frac{\sigma}{2}\cdot(\lambda+4\pi\sqrt{-1})=-2\pi\sqrt{-1}\tau'.
\end{aligned}
\right.
\end{equation}

By Lemma \ref{lem 4.1}, we have 
\begin{align*}
\lambda&=-\frac{2\pi\sqrt{-1}}{\mathrm{Vol}_{\Omega_\sigma}(X\times \mathbb{P}^1)}\frac{\mathrm{deg}_\sigma F}{\mathrm{rank}F}\\
&=-\frac{4\pi\sqrt{-1}}{\sigma}\cdot\frac{\mathrm{deg}E_1+\mathrm{deg}E_2+\sigma\cdot r_2}{r_1+r_2}.
\end{align*}

Therefore, the equations (\ref{prop 4.7 eq 3}) hold.\par
We prove the converse direction.
Suppose that $\iota_h(D)=(\nabla_{h_1}, \Phi_1, \nabla_{h_2}, \Phi_2, \phi, \psi)$ is a solution of the doubly-coupled $\tau$-vortex equation.\par
Since (\ref{prop 4.7 eq 1}) and (\ref{prop 4.7 eq 2}) are equivalent and (\ref{prop 4.7 eq 2}) holds, to prove $(F,\nabla^{0,1}_h,\sqrt{-1}\Phi^{1,0},h)$ is Hermitian-Einstein, we only need to prove that the off-diagonal parts of $F_D$ are in the kernel of $\Lambda_\sigma$. This is equivalent to showing that   
\begin{align*}
&\Lambda_\sigma(\widetilde{\nabla}_{\mathrm{Hom}(F_2,F_1)}(p^\ast \psi\otimes q^\ast \alpha)-p^\ast\Psi_1\wedge(p^\ast \phi\otimes q^\ast\beta)^\ast-(p^\ast \phi\otimes q^\ast\beta)^\ast\wedge p^\ast\Psi_2)=0,\\
&\Lambda_\sigma(-\widetilde{\nabla}_{\mathrm{Hom}(F_1,F_2)}(p^\ast \psi\otimes q^\ast \alpha)^\ast+p^\ast \phi\otimes q^\ast\beta\wedge p^\ast\Psi_1+p^\ast\Psi_2 \wedge p^\ast \phi\otimes q^\ast\beta)=0
\end{align*}
holds.
These two equations hold because each differential form that appears in the above equation has a mixed contribution from $X$ and $\mathbb{P}^1$.
\end{proof} 

\section{Higgs Quadruplets and the Kobayashi-Hitchin Correspondence}\label{sec 5}
Let  $(X,\omega)$ be a compact Riemann surface, $\mathbb{P}^1$ be the projective line, $\omega_{\mathbb{P}^1}$ be the Fubini-Study form.
 We normalize $\omega$ and $\omega_{\mathbb{P}^1}$ so that $\int_X\omega=\int_{\mathbb{P}^1}\omega_{\mathbb{P}^1}=1$.
 We fix these spaces and K\"ahler forms throughout the section.  
We assume that $SU(2)$ acts on $X$ trivially and on $\mathbb{P}^1$ in the standard manner.  We freely use the notation that we used so far.\par
We would like to relate the existence of solutions of the doubly-coupled $\tau$-vortex equation to some holomorphic objects that satisfy a certain kind of stability.
We first introduce the holomorphic objects which we call \textit{Higgs quadruplet}.

\begin{definition}
We say a quadruplet $Q=((E_1, \overline\partial_{E_1},\theta_1),(E_2,\overline\partial_{E_2},\theta_2),\phi,\psi)$ is a Higgs quadruplet over $X$ if
\begin{itemize}
\item[1.] Each $(E_i, \overline\partial_{E_i},\theta_i)$ is a Higgs bundle over $X$ and 
\item[2.] $\phi:E_1\to E_2, \psi:E_2\to E_1$ are morphisms of Higgs bundles (i.e. $\phi\circ(\overline\partial_{E_1}+\theta_{E_1})=(\overline\partial_{E_2}+\theta_{E_2})\circ\phi, (\overline\partial_{E_1}+\theta_{E_1})\circ\psi=\psi\circ(\overline\partial_{E_2}+\theta_{E_2})$ holds).
\item[3.] $\psi\circ\phi=\phi\circ\psi=0$ holds.
\end{itemize}
\end{definition} 

Let  $Q=((E_1, \overline\partial_{E_1},\theta_1),(E_2,\overline\partial_{E_2},\theta_2),\phi,\psi)$ be a Higgs quadruplet over $X$.
Let $h_1$ and $h_2$ be hermitian metrics on $E_1$ and $E_2$,
$\partial_{h_i}$ be the $(1,0)$-part of the Chern connection with respect to $\overline\partial_{E_i}$ and $h_i$, and 
$\theta^\dagger_{h_i}$ be the formal adjoint of $\theta_i$ with respect to $h_i$.
Then, it is clear from the definition that  

\begin{equation*}
(\partial_{h_1}+\overline\partial_{E_1}, -\sqrt{-1}\theta_1+\sqrt{-1}\theta^\dagger_{h_1},\partial_{h_1}+\overline\partial_{E_2}, -\sqrt{-1}\theta_2+\sqrt{-1}\theta^\dagger_{h_2},-\sqrt{-1}\phi,\psi)\in N.
\end{equation*}

\begin{definition}
Let $Q=((E_1, \overline\partial_{E_1},\theta_1),(E_2,\overline\partial_{E_2},\theta_2),\phi,\psi)$ be a Higgs quadruplet over $X$.
Let $h_1$ and $h_2$ be hermitian metric of $E_1$ and $E_2$.
We say that $(Q,h_1,h_2)$ is a solution of the doubly-coupled $\tau$-vortex equation if 
\begin{equation*}
(\partial_{h_1}+\overline\partial_{E_1}, -\sqrt{-1}\theta_1+\sqrt{-1}\theta^\dagger_{h_1},\partial_{h_1}+\overline\partial_{E_2}, -\sqrt{-1}\theta_2+\sqrt{-1}\theta^\dagger_{h_2},-\sqrt{-1}\phi,\psi)\in N
\end{equation*}
is a solution of the doubly-coupled $\tau$-vortex equation.
This is equivalent to the following equations
\begin{equation*}
\left\{\,
\begin{split}
&\Lambda (F_{h_1}+[\theta_1,\theta^\dagger_{h_1}])-\sqrt{-1}\phi^\ast\circ \phi-\sqrt{-1}\psi\circ \psi^\ast+2\pi\sqrt{-1}\tau\mathrm{Id}_{E_1}=0,\\
&\Lambda (F_{h_2}+[\theta_2,\theta^\dagger_{h_2}])+\sqrt{-1}\phi\circ \phi^\ast+\sqrt{-1}\psi^\ast\circ \psi+2\pi\sqrt{-1}\tau'\mathrm{Id}_{E_2}=0.
\end{split}
\right.
\end{equation*}
\end{definition}

Let $F$ be an $SU(2)$-invariant vector bundle over $X\times \mathbb{P}^1$ defined as 
\begin{equation*}
F=p^\ast E_1\oplus p^\ast E_2\otimes q^\ast \mathcal{O}_{\mathbb{P}^1}(2).
\end{equation*}
Let $(F,\overline\partial_F,\theta_F)$ be the Higgs bundle defined as 
\begin{equation*}
\overline\partial_F:=
\begin{pmatrix}
p^\ast\overline\partial_{E_1}&p^\ast\psi\otimes q^\ast\alpha\\
0& p^\ast\overline\partial_{E_2}\otimes Id+Id\otimes q^\ast\overline\partial_{\mathcal{O}_{\mathbb{P}^1}(2)}
\end{pmatrix}
,\,\,
\theta_F:=
\begin{pmatrix}
p^\ast \theta_1&0\\
p^\ast\phi\otimes q^\ast \beta&p^\ast\theta_2.
\end{pmatrix}
\end{equation*}

$(F,\overline\partial_F,\theta_F)$ is indeed a Higgs bundle from Proposition \ref{prop 4.3}.
From Proposition \ref{prop 4.7}, we have the following.

\begin{proposition}\label{prop 5.1}
Let $Q=((E_1, \overline\partial_{E_1},\theta_1),(E_2,\overline\partial_{E_2},\theta_2),\phi,\psi)$ be a Higgs quadruplet over $X$. 
Let $\sigma\in\mathbb{R}_{>0}$ and 
\begin{equation*}
\tau=\frac{\mathrm{deg}E_1+\mathrm{deg}E_2+\sigma\cdot r_2}{r_1+r_2}.
\end{equation*}
We fix the K\"ahler form $\Omega_\sigma=(\frac{\sigma}{2}p^\ast\omega)\oplus q^\ast\omega_{\mathbb{P}^1}$ on $X\times \mathbb{P}^1$.
Then, the following two are equivalent.
\begin{itemize}
\item[1.] $E_1$ and $E_2$ admits hermitian metrics $h_1$ and $h_2$ such that   $(Q,h_1,h_2)$ is a solution of the doubly-coupled $\tau$-vortex equation.
\item[2.] $(F,\overline\partial_F,\theta_F)$ is polystable with respect to the $SU(2)$-action and the K\"ahler metric $\Omega_\sigma$.
\end{itemize}
Moreover, $h_1$ and $h_2$ are unique up to positive constants if $(F,\overline\partial_F,\theta_F)$ is stable with respect to the $SU(2)$-action.
\end{proposition}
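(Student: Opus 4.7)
The plan is to deduce the proposition by combining the dimensional reduction result (Proposition~\ref{prop 4.7}) with Simpson's equivariant Kobayashi-Hitchin correspondence (Theorem~\ref{thm 2.1}), and using Proposition~\ref{prop 4.1} to translate between $SU(2)$-invariant hermitian metrics on $F$ and pairs of hermitian metrics on $(E_1,E_2)$. The key observation is that the assembly $h := p^*h_1 \oplus p^*h_2 \otimes q^*h^{(2)}$ sets up a bijection between pairs of hermitian metrics on $(E_1,E_2)$ and $SU(2)$-invariant hermitian metrics on $F$, up to the positive-scalar ambiguity in the choice of $SU(2)$-invariant metric on the fiber $\mathcal{O}_{\mathbb{P}^1}(2)$.

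For the implication $(1) \Rightarrow (2)$, I would start from metrics $h_1, h_2$ making $(Q,h_1,h_2)$ a solution, and form $h = p^*h_1 \oplus p^*h_2 \otimes q^*h^{(2)}$ on $F$. This metric is manifestly $SU(2)$-invariant, and under $\iota_h \circ \kappa$ (Proposition~\ref{prop 4.6}) the given Higgs bundle structure $(\overline\partial_F,\theta_F)$ corresponds exactly to the element of $N$ built from the Chern data of $(Q,h_1,h_2)$. Proposition~\ref{prop 4.7} then upgrades the vortex-equation condition on this element of $N$ to the Hermitian-Einstein condition for $(F,\overline\partial_F,\theta_F,h)$ with respect to $\Omega_\sigma$, and Theorem~\ref{thm 2.1} yields polystability of $(F,\overline\partial_F,\theta_F)$ with respect to the $SU(2)$-action.

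For the converse $(2) \Rightarrow (1)$, I would apply Theorem~\ref{thm 2.1} to obtain an $SU(2)$-invariant Hermitian-Einstein metric $h$ on $(F,\overline\partial_F,\theta_F)$ with respect to $\Omega_\sigma$. By Proposition~\ref{prop 4.1}, $h$ decomposes as $p^*h_1' \oplus (p^*h_2' \otimes q^*k)$ for some metrics $h_1', h_2'$ on $E_1, E_2$ and some $SU(2)$-invariant hermitian metric $k$ on $\mathcal{O}_{\mathbb{P}^1}(2)$; since $SU(2)$ acts transitively on $\mathbb{P}^1$, one has $k = c\cdot h^{(2)}$ for some $c>0$, and absorbing $c$ into $h_2'$ produces $h_1, h_2$ with $h = p^*h_1 \oplus p^*h_2 \otimes q^*h^{(2)}$. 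Applying Proposition~\ref{prop 4.7} in reverse shows that $(Q,h_1,h_2)$ solves the doubly-coupled $\tau$-vortex equation. For uniqueness in the stable case, Theorem~\ref{thm 2.1} forces $h$ to be unique up to a single positive scalar, which under the rigid decomposition descends to a simultaneous positive rescaling of $h_1$ and $h_2$. I do not expect a genuine obstacle here: once Proposition~\ref{prop 4.7} and Theorem~\ref{thm 2.1} are in hand, the proof is essentially bookkeeping, the only mild subtlety being the scalar ambiguity in the $\mathcal{O}_{\mathbb{P}^1}(2)$-factor, which is disposed of by the transitivity of the $SU(2)$-action on $\mathbb{P}^1$.
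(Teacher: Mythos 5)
Your proposal is correct and follows essentially the same route as the paper: Proposition~\ref{prop 4.7} converts between the doubly-coupled $\tau$-vortex equation and the Hermitian--Einstein condition for $h=p^\ast h_1\oplus p^\ast h_2\otimes q^\ast h^{(2)}$, Theorem~\ref{thm 2.1} supplies the equivalence with $SU(2)$-equivariant polystability, and Proposition~\ref{prop 4.1} gives the decomposition of an invariant metric on $F$ in the converse direction. Your extra care with the scalar ambiguity in the $\mathcal{O}_{\mathbb{P}^1}(2)$-factor is a harmless refinement of what Proposition~\ref{prop 4.1} already provides.
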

\begin{proof}
Suppose $(Q,h_1,h_2)$ is a solution of the doubly-coupled $\tau$-vortex equation.
Then by Proposition \ref{prop 4.7}, $p^\ast h_1\oplus p^\ast h_2\otimes q^\ast h^{(2)}$ is an $SU(2)$-invariant Hermitian-Einstein metric on $F$ with respect to the K\"ahler form $\Omega_\sigma$.
Then by Theorem \ref{thm 2.1}, $F$ is polystable with respect to the $SU(2)$-action and $\Omega_\sigma$.\par
Conversely, let $F$ be polystable with respect to the $SU(2)$-action and $\Omega_\sigma$. 
Then again, by Theorem \ref{thm 2.1}, there exists an $SU(2)$-invariant Hermitian-Einstein metric $h$.
Then by Proposition \ref{prop 4.1}, $h$ has the form 
\begin{equation*}
h=p^\ast h_1\oplus p^\ast h_2\otimes q^\ast h^{(2)}.
\end{equation*}
By Proposition \ref{prop 4.7}, $(Q,h_1,h_2)$ is a solution of the doubly-coupled $\tau$-vortex equation.
If $F$ is stable, then the Hermitian-Einstein metric is unique up to positive constants, and hence the uniqueness of $h_1$ and $h_2$ follows.
\end{proof}

\subsection{Stability of Higgs Quadruplets}\label{sec 5.1}
Let $Q=((E_1, \overline\partial_{E_1},\theta_1),(E_2,\overline\partial_{E_2},\theta_2),\phi,\psi)$ be a Higgs quadruplet over $X$.
We would like to relate the existence of the solution of the doubly-coupled vortex equation to the stability of Higgs quadruplets.
Before we introduce the stability, we introduce the sub-objects and the sum of Higgs quadruplets.
\begin{definition}
Let $Q=((E_1, \overline\partial_{E_1},\theta_1),(E_2,\overline\partial_{E_2},\theta_2),\phi,\psi)$ be a Higgs quadruplet over $X$. 
A Higgs quadruplet $Q'=((E'_1, \overline\partial_{E'_1},\theta'_1),(E'_2,\overline\partial_{E'_2},\theta'_2),\phi',\psi')$ is sub-Higgs quadruplet of $Q$ if 
\begin{itemize}
\item $(E'_i, \overline\partial_{E'_i},\theta'_i)$ is a sub-Higgs bundle of $(E_i, \overline\partial_{E_i},\theta_i)$ for $i=1,2.$
\item The following diagrams are commutative.
\begin{equation*}
\begin{CD}
     (E_1, \overline\partial_{E_1},\theta_1) @>{\phi}>> (E_2, \overline\partial_{E_2},\theta_2) \\
  @AAA   @AAA \\
     (E'_1, \overline\partial_{E'_1},\theta'_1)  @>{\phi'}>>  (E'_2, \overline\partial_{E'_2},\theta'_2)
  \end{CD}
 ,\,\,\,\,\,
\begin{CD}
     (E_2, \overline\partial_{E_2},\theta_2) @>{\psi}>> (E_1, \overline\partial_{E_1},\theta_1) \\
  @AAA   @AAA \\
     (E'_2, \overline\partial_{E'_2},\theta'_2)  @>{\psi'}>>  (E'_1, \overline\partial_{E'_1},\theta'_1)
  \end{CD}.
  \end{equation*}
\end{itemize}
We call the zero Higgs quadruplet $Q'=0$ obtained by taking $E_1=E_2=0$ and the sub-Higgs quadruplet $Q'=Q$ trivial subquadruplets.
\end{definition}

\begin{definition}
Let $Q_E= ((E_1, \overline\partial_{E_1},\theta_{E_1}),(E_2,\overline\partial_{E_2},\theta_{E_2}),\phi,\psi)$ and $Q_F= ((F_1, \overline\partial_{F_1},\theta_1),(F_2,\overline\partial_{F_2},\theta_{F_2}),f,g)$ be Higgs quadruplets on $X$. 
We define 
\begin{align*}
&Q_E\oplus Q_F\\
:=&\bigg(\bigg(
E_1\oplus F_1, 
\begin{pmatrix}
\overline\partial_{E_1}&0\\
0&\overline\partial_{F_1}
\end{pmatrix}
,
\begin{pmatrix}
\theta_{E_1}&0\\
0&\theta_{F_1}
\end{pmatrix}
\bigg),
\bigg(
E_2\oplus F_2
\begin{pmatrix}
\overline\partial_{E_2}&0\\
0&\overline\partial_{F_2}
\end{pmatrix},
\begin{pmatrix}
\theta_{E_2}&0\\
0&\theta_{F_2}
\end{pmatrix}
\bigg),
\begin{pmatrix}
\phi&0\\
0&f
\end{pmatrix}
,
\begin{pmatrix}
\psi&0\\
0&g
\end{pmatrix}
\bigg)
\end{align*}
\end{definition}

We are now able to define stability for a given quadruplet following \cite{BG}.
 
\begin{definition}
Let $Q=((E_1, \overline\partial_{E_1},\theta_1),(E_2,\overline\partial_{E_2},\theta_2),\phi,\psi)$ be a Higgs quadruplet and
 $Q':=((E'_1, \overline\partial_{E'_1},\theta'_1),(E'_2,\overline\partial_{E'_2},\theta'_2),\phi',\psi')$ be a non-trivial sub-Higgs quadruplet of $Q$.
 Suppose $r_i:=\mathrm{rank}E_i,r'_i:=\mathrm{rank}E'_i$ for $i=1,2.$
 For any $\tau\in\mathbb{R}$, we define 
 \begin{equation*}
 \Theta_\tau(Q'):=(\mu(E'_1\oplus E'_2)-\tau)-\frac{r'_2}{r_2}\frac{r_1+r_2}{r'_1+r'_2}(\mu(E_1\oplus E_2)-\tau).
 \end{equation*}
We say that the quadruplet $Q$ is $\tau$-stable if 
 \begin{equation*}
 \Theta_\tau(Q')<0
 \end{equation*}
 for all nontrivial sub-Higgs quadruplets. 
 We say $Q$ is $\tau$-semistable if $ \Theta_\tau(Q')<0$ for all nontrivial sub-Higgs quadruplets.
 We say $Q$ is $\tau$-polystable if $Q=\oplus_i Q_i$ such that $Q_i$ is stable with $\Theta(Q)=\Theta(Q_i)$.
 \end{definition}
 
 We define another kind of stability.
 
 \begin{definition}
 Let $Q=((E_1, \overline\partial_{E_1},\theta_1),(E_2,\overline\partial_{E_2},\theta_2),\phi,\psi)$ be a Higgs quadruplet of $\mathrm{rank}E_1=r_1,\mathrm{rank}E_2=r_2.$
 Let $\sigma\in\mathbb{R}$.
 For $Q$, we define $\sigma$-degree $\mathrm{deg}_\sigma (Q)$ and $\sigma$-slope $\mu_\sigma(Q)$ as
\begin{equation*}
\mathrm{deg}_\sigma (Q)=\mathrm{deg}E_1+ \mathrm{deg}E_2+r_2\sigma
\end{equation*}
and 
\begin{equation*}
\mu_\sigma(Q):=\frac{\mathrm{deg}_\sigma (Q)}{r_1+r_2}.
\end{equation*}   
We say that the quadruplet $Q$ is $\sigma$-stable if
\begin{equation*}
\mu_\sigma(Q')<\mu_\sigma(Q)
\end{equation*} 
for all non-trivial sub-Higgs quadruplets.
We say that $Q$ is $\sigma$-semistable if $\mu_\sigma(Q')\leq\mu_\sigma(Q)$ for all non-trivial sub-Higgs quadruplets.
We say $Q$ is $\sigma$-polystable if $Q=\oplus_i Q_i$ such that $Q_i$ is stable with $\mu_\sigma(Q)=\mu_\sigma(Q_i)$.
\end{definition}

 These stabilities are equivalent once for an appropriate $\tau$ and $\sigma.$ 
 In particular, we have the following.
 
 \begin{proposition}[{\cite[Proposition 3.4]{G2}}]\label{prop 5.2}
 Let $Q=((E_1, \overline\partial_{E_1},\theta_1),(E_2,\overline\partial_{E_2},\theta_2),\phi,\psi)$ be a Higgs quadruplet. 
 Fix $\tau$ and $\sigma$ such that $\tau=\mu_\sigma(Q)$.
 Then $Q$ is $\tau$-(semi)stable if and only if $\sigma$-(semi)stable.
  \end{proposition}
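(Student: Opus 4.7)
The plan is to reduce the statement to a direct algebraic identity between $\Theta_\tau(Q')$ and $\mu_\sigma(Q') - \mu_\sigma(Q)$, after which the equivalence of the two notions of (semi)stability follows immediately by inspecting signs.

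First, I would unpack the hypothesis $\tau = \mu_\sigma(Q)$. By the definition of $\mu_\sigma$, this reads
\begin{equation*}
\tau(r_1+r_2) = \deg E_1 + \deg E_2 + r_2 \sigma,
\end{equation*}
and therefore
\begin{equation*}
\mu(E_1 \oplus E_2) - \tau = \frac{\deg E_1 + \deg E_2}{r_1+r_2} - \tau = -\frac{r_2 \sigma}{r_1+r_2}.
\end{equation*}
Substituting this expression into the formula defining $\Theta_\tau(Q')$, the factor $(r_1+r_2)/(r'_1+r'_2)$ cancels cleanly with $r_2/(r_1+r_2)$, yielding
\begin{equation*}
\Theta_\tau(Q') = \bigl(\mu(E'_1\oplus E'_2) - \tau\bigr) + \frac{r'_2 \sigma}{r'_1+r'_2}.
\end{equation*}

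Next, I would rewrite the right-hand side in terms of $\mu_\sigma$. Since
\begin{equation*}
\mu(E'_1 \oplus E'_2) + \frac{r'_2 \sigma}{r'_1+r'_2} = \frac{\deg E'_1 + \deg E'_2 + r'_2 \sigma}{r'_1 + r'_2} = \mu_\sigma(Q'),
\end{equation*}
we obtain the key identity
\begin{equation*}
\Theta_\tau(Q') = \mu_\sigma(Q') - \tau = \mu_\sigma(Q') - \mu_\sigma(Q).
\end{equation*}

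From this identity the conclusion is immediate: for every nontrivial sub-Higgs quadruplet $Q'\subset Q$, $\Theta_\tau(Q') < 0$ (resp.\ $\leq 0$) holds if and only if $\mu_\sigma(Q') < \mu_\sigma(Q)$ (resp.\ $\leq \mu_\sigma(Q)$). Hence $Q$ is $\tau$-stable (resp.\ $\tau$-semistable) exactly when it is $\sigma$-stable (resp.\ $\sigma$-semistable). There is no real obstacle; the only point to be careful about is tracking the normalization factor $r_2/(r_1+r_2)$ appearing in $\Theta_\tau$, which is precisely what makes the two slope functions proportional with a positive constant after using the constraint $\tau = \mu_\sigma(Q)$.
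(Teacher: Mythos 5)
Your computation is correct and complete: under the constraint $\tau=\mu_\sigma(Q)$ one has $\mu(E_1\oplus E_2)-\tau=-\frac{r_2\sigma}{r_1+r_2}$, and substituting into the definition of $\Theta_\tau$ gives exactly the identity $\Theta_\tau(Q')=\mu_\sigma(Q')-\mu_\sigma(Q)$, from which both the stable and semistable equivalences follow by comparing signs. This is essentially the paper's (implicit) argument: the paper gives no proof of this proposition and simply cites the analogous statement for holomorphic triples in \cite{G2, BG}; since both stability notions depend only on the ranks and degrees of the sub-objects and not on the Higgs fields or the morphisms $\phi,\psi$, the cited computation carries over verbatim, and what you wrote is that computation made explicit. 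One cosmetic remark: the relation you obtain is an exact equality of the two quantities, not merely proportionality by a positive constant, which is even cleaner than your closing sentence suggests.
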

\subsection{Sub-Higgs Quadruplets and Sub-Higgs bundles}
 Let $Q=((E_1, \overline\partial_{E_1},\theta_1),(E_2,\overline\partial_{E_2},\theta_2),\phi,\psi)$ be a Higgs quadruplet over $X$.
 Let $F$ be the $SU(2)$-equivalent vector bundle over $X\times \mathbb{P}^1$ defined as 
 \begin{equation*}
F=p^\ast E_1\oplus p^\ast E_2\otimes q^\ast \mathcal{O}_{\mathbb{P}^1}(2).
\end{equation*}
 Let $(F,\overline\partial_F,\theta_F)$ be the Higgs bundle defined as 
\begin{equation*}
\overline\partial_F:=
\begin{pmatrix}
p^\ast\overline\partial_{E_1}&p^\ast\psi\otimes q^\ast\alpha\\
0& p^\ast\overline\partial_{E_2}\otimes Id+Id\otimes q^\ast\overline\partial_{\mathcal{O}_{\mathbb{P}^1}(2)}
\end{pmatrix}
,\,\,
\theta_F:=
\begin{pmatrix}
p^\ast \theta_1&0\\
p^\ast\phi\otimes q^\ast \beta&p^\ast\theta_2
\end{pmatrix}.
\end{equation*}
The following will be used later.

\begin{lemma}\label{lem 5.1}
Let $x\in A^{1,0}(\mathrm{End}F)$ be an $SU(2)$-invariant section.
Then there exists $x_1\in A^{1,0}(\mathrm{End}E_1), x_1\in A^{1,0}(\mathrm{End}E_2),\phi\in A(\mathrm{Hom}(E_1,E_2))$ such that
\begin{equation*}
x=
\begin{pmatrix}
p^\ast x_1&0\\
p^\ast\phi\otimes q^\ast\beta&p^\ast x_2
\end{pmatrix}.
\end{equation*}
\end{lemma}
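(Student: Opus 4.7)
The plan is to mimic the bookkeeping already carried out in Proposition \ref{prop 4.3}(4), now applied to the full $2\times 2$ block decomposition of $\mathrm{End}F$ rather than just to $\mathrm{Hom}(F_1,F_2)$. Writing $F_1:=p^\ast E_1$ and $F_2:=p^\ast E_2\otimes q^\ast\mathcal{O}_{\mathbb{P}^1}(2)$, any section $x$ of $\mathrm{End}F$ splits as a matrix $(x_{ij})$ with $x_{ij}\in A^{1,0}(\mathrm{Hom}(F_j,F_i))$, and $SU(2)$-invariance of $x$ is equivalent to $SU(2)$-invariance of each block. So the task reduces to identifying the $SU(2)$-invariant part of each $A^{1,0}(\mathrm{Hom}(F_j,F_i))$.

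The key input is the splitting of the holomorphic cotangent bundle,
\begin{equation*}
T^{\ast(1,0)}(X\times\mathbb{P}^1)=p^\ast T^{\ast(1,0)}X\oplus q^\ast\mathcal{O}_{\mathbb{P}^1}(-2),
\end{equation*}
together with the elementary computation of the four \textrm{Hom} bundles:
\begin{equation*}
\mathrm{Hom}(F_1,F_1)=p^\ast\mathrm{End}E_1,\quad \mathrm{Hom}(F_2,F_2)=p^\ast\mathrm{End}E_2,
\end{equation*}
\begin{equation*}
\mathrm{Hom}(F_2,F_1)=p^\ast\mathrm{Hom}(E_2,E_1)\otimes q^\ast\mathcal{O}_{\mathbb{P}^1}(-2),\quad \mathrm{Hom}(F_1,F_2)=p^\ast\mathrm{Hom}(E_1,E_2)\otimes q^\ast\mathcal{O}_{\mathbb{P}^1}(2).
\end{equation*}
Tensoring each with $T^{\ast(1,0)}(X\times\mathbb{P}^1)$, one lists the resulting line-bundle factors on $\mathbb{P}^1$: they are $\mathcal{O}$ or $\mathcal{O}_{\mathbb{P}^1}(\pm 2)$ in all cases, except for the $(2,1)$-block whose two summands are $\mathcal{O}_{\mathbb{P}^1}(2)$ and $\mathcal{O}_{\mathbb{P}^1}(2)\otimes\mathcal{O}_{\mathbb{P}^1}(-2)\cong\mathcal{O}$, and for the $(1,2)$-block whose two summands are $\mathcal{O}_{\mathbb{P}^1}(-2)$ and $\mathcal{O}_{\mathbb{P}^1}(-4)$.

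Now I apply the vanishing principle of Proposition \ref{prop 2.1}(1): any summand of the form $p^\ast V\otimes q^\ast\mathcal{O}_{\mathbb{P}^1}(n)$ with $n\neq 0$ has no non-zero $SU(2)$-invariant section, since restricting to a fixed point of $X$ produces an invariant section of $\mathcal{O}_{\mathbb{P}^1}(n)^{\oplus\mathrm{rank}V}$, and such a section must vanish. This instantly kills the $(1,2)$-block (both summands are $\mathcal{O}_{\mathbb{P}^1}(-2)$ and $\mathcal{O}_{\mathbb{P}^1}(-4)$), kills the $\mathcal{O}_{\mathbb{P}^1}(\pm 2)$-summands of the $(1,1)$-, $(2,2)$-, and $(2,1)$-blocks, and leaves precisely: a pullback $p^\ast x_1$ with $x_1\in A^{1,0}(\mathrm{End}E_1)$ in the $(1,1)$-slot, a pullback $p^\ast x_2$ with $x_2\in A^{1,0}(\mathrm{End}E_2)$ in the $(2,2)$-slot, and an invariant section of $p^\ast\mathrm{Hom}(E_1,E_2)\otimes q^\ast(\mathcal{O}_{\mathbb{P}^1}(-2)\otimes\mathcal{O}_{\mathbb{P}^1}(2))$ in the $(2,1)$-slot. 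The final step is to recognize that this last space has its invariants spanned, over $A(\mathrm{Hom}(E_1,E_2))$, by $q^\ast\beta$ viewed as an element of $A^{1,0}(\mathcal{O}_{\mathbb{P}^1}(2))$ via the canonical trivialization $\mathcal{O}_{\mathbb{P}^1}(-2)\otimes\mathcal{O}_{\mathbb{P}^1}(2)\cong\mathcal{O}$; this is Proposition \ref{prop 2.1}(2), and gives the factor $p^\ast\phi\otimes q^\ast\beta$.

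The only real obstacle is notational/bookkeeping: making sure the identification of the $(2,1)$-block's invariant sections with $A(\mathrm{Hom}(E_1,E_2))\otimes\mathbb{C}\cdot\beta$ is carried out correctly, because the ``trivial'' factor $\mathcal{O}_{\mathbb{P}^1}(-2)\otimes\mathcal{O}_{\mathbb{P}^1}(2)\cong\mathcal{O}$ is only canonically trivial as an $SU(2)$-equivariant bundle, and this is what lets one rewrite the single invariant section of the form $p^\ast\phi\otimes(\text{invariant})$ as $p^\ast\phi\otimes q^\ast\beta$ up to a scalar absorbed into $\phi$. Everything else is an immediate corollary of Propositions \ref{prop 2.1} and \ref{prop 4.3}.
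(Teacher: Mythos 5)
Your proposal is correct and is essentially the paper's own argument: the paper proves Lemma \ref{lem 5.1} simply by citing ``the argument in Proposition \ref{prop 4.3}'', which is exactly the block decomposition of $\mathrm{End}F$, the splitting of the cotangent bundle, and the vanishing of $SU(2)$-invariant sections of $\mathcal{O}_{\mathbb{P}^1}(n)$ for $n\neq 0$ from Proposition \ref{prop 2.1} that you spell out. Your write-up just makes that reference explicit, including the identification of the sole surviving off-diagonal block as $p^\ast\phi\otimes q^\ast\beta$.
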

\begin{proof}
This follows from the argument in Proposition \ref{prop 4.3}.
\end{proof}
The goal of this subsection is to prove the following, which is the Higgs quadruplets analog of \cite[Lemma 4.3]{BG}.

\begin{proposition}\label{prop 5.3}
There exists a bijective correspondence between sub-Higgs quadruplets of $Q$ and $SU(2)$-invariant sub-Higgs bundles of $F$.
\end{proposition}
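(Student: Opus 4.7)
The plan is to establish the correspondence in both directions by exploiting the rigid structure of $SU(2)$-invariant objects developed in Section \ref{sec2} and Section \ref{sec 4}, and then to verify that the two constructions are mutually inverse.

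\textbf{Forward direction.} Given a sub-Higgs quadruplet $Q' = ((E'_1, \overline\partial_{E'_1}, \theta'_1), (E'_2, \overline\partial_{E'_2}, \theta'_2), \phi', \psi')$ of $Q$, I would set
\[
F' := p^\ast E'_1 \oplus p^\ast E'_2 \otimes q^\ast \mathcal{O}_{\mathbb{P}^1}(2)
\]
and equip it with the Higgs structure obtained by restricting the block operators $\overline\partial_F$ and $\theta_F$. Since $SU(2)$ acts trivially on $X$ and acts on $\mathcal{O}_{\mathbb{P}^1}(2)$ in the standard way, $F'$ is $SU(2)$-invariant by construction. Reading off the block form of $\overline\partial_F$, the inclusion $F' \hookrightarrow F$ is compatible with $\overline\partial_F$ if and only if $\overline\partial_{E_i}$ preserves $E'_i$ and $\psi(E'_2) \subset E'_1$ (since $\alpha$ is nowhere vanishing); both follow from the sub-Higgs quadruplet conditions. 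Likewise, compatibility with $\theta_F$ reduces to $\theta_i$ preserving $E'_i$ and $\phi(E'_1) \subset E'_2$, again a consequence of $Q'$ being a sub-Higgs quadruplet.

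\textbf{Backward direction.} Given an $SU(2)$-invariant sub-Higgs bundle $F' \subset F$, by Proposition \ref{prop 4.1} we have a unique decomposition $F' = \bigoplus_j p^\ast V_j \otimes q^\ast H^{n_j}$. The key step is to show that each weight $n_j$ must lie in $\{0,2\}$ and that the corresponding component embeds into the matching summand of $F$. Restricting the inclusion to the $j$-th summand and composing with the projection onto $p^\ast E_1$ produces an $SU(2)$-invariant section of $p^\ast (V_j^\vee \otimes E_1) \otimes q^\ast H^{-n_j}$. Applying the argument of Proposition \ref{prop 4.3} (which is essentially Proposition \ref{prop 2.1} (1) applied fiberwise over $X$) this vanishes unless $n_j = 0$. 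Projecting instead onto $p^\ast E_2 \otimes q^\ast \mathcal{O}_{\mathbb{P}^1}(2)$ forces $n_j = 2$. Hence $F' = p^\ast E'_1 \oplus p^\ast E'_2 \otimes q^\ast \mathcal{O}_{\mathbb{P}^1}(2)$ for uniquely determined sub-bundles $E'_i \subset E_i$. The sub-Higgs compatibility $\overline\partial_F(A(F')) \subset A^{0,1}(F')$ and $\theta_F(A(F')) \subset A^{1,0}(F')$, read off blockwise, give (i) $\overline\partial_{E_i}$-invariance of $E'_i$, (ii) $\theta_i$-invariance of $E'_i$, (iii) $\psi(E'_2) \subset E'_1$, and (iv) $\phi(E'_1) \subset E'_2$. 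Setting $\theta'_i := \theta_i|_{E'_i}$, $\phi' := \phi|_{E'_1}$, $\psi' := \psi|_{E'_2}$ produces a sub-Higgs quadruplet, with $\phi' \circ \psi' = \psi' \circ \phi' = 0$ automatic from the analogous relation for $Q$.

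\textbf{Bijectivity.} The two assignments are manifestly mutual inverses from the explicit block formulas, so this is a short verification. The main obstacle is the backward direction: one must argue that the abstract isotypic decomposition of an $SU(2)$-invariant sub-bundle is compatible with the particular decomposition of $F$ into the two summands of weight $0$ and weight $2$. This is where Proposition \ref{prop 2.1} does the essential work, ruling out all intermediate weights by the non-existence of $SU(2)$-invariant sections of $\mathcal{O}_{\mathbb{P}^1}(n)$ for $n\neq 0$. Once this structural fact is in place, translating the differential conditions into conditions on $(\phi, \psi, \theta_1, \theta_2)$ is a direct computation parallel to Proposition \ref{prop 4.5}.
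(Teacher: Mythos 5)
Your proposal is correct, and its forward direction coincides with the paper's. The difference lies in the backward direction: the paper does not re-derive the block structure of an invariant subbundle from scratch, but instead splits the problem into the underlying holomorphic triple, cites \cite[Lemma 4.3]{BG} (Lemma \ref{lem 5.2}) to conclude that the $SU(2)$-invariant holomorphic subbundle $(F',\overline\partial_{F'})$ has the form $p^\ast E'_1\oplus p^\ast E'_2\otimes q^\ast\mathcal{O}_{\mathbb{P}^1}(2)$ with triangular $\overline\partial_{F'}$, and then uses Lemma \ref{lem 5.1} (itself extracted from the proof of Proposition \ref{prop 4.3}) to put the restricted Higgs field $\theta_{F'}$ into the lower-triangular form, after which the calculation of Proposition \ref{prop 4.5} identifies the data as a sub-Higgs quadruplet. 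You instead re-prove the structural input directly: you apply the isotypic decomposition of Proposition \ref{prop 4.1} to $F'$, kill all weights other than $0$ and $2$ via the nonexistence of invariant sections of $\mathcal{O}_{\mathbb{P}^1}(n)$, $n\neq 0$ (Proposition \ref{prop 2.1}(1), applied fiberwise, together with the descent of invariant homomorphisms to $X$ as in the proof of Proposition \ref{prop 4.3}), and then read off conditions (i)--(iv) blockwise from the invariance of $F'$ under $\overline\partial_F$ and $\theta_F$. In effect you reprove the relevant case of \cite[Lemma 4.3]{BG} inside the argument; this makes the proof self-contained and treats the $\overline\partial$- and $\theta$-parts uniformly, at the cost of having to verify carefully that each isotypic component of $F'$ lands in the matching summand of $F$ and that invariant bundle maps are pullbacks from $X$ — exactly the points the paper outsources to \cite{BG} and \cite{G2}. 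Both routes are sound and yield the same bijection.
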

Before we give the proof, we first recall \cite[Lemma 4.3]{BG}.
In \cite{BG}, \textit{holomorphic triple} on $X$ was introduced.
Holomorphic triple on $X$ is a triple $T:=((V_1,\overline\partial_{V_1}),(V_2,\overline\partial_{V_2}),\gamma)$ such that each $(V_i,\overline\partial_{V_i})$ is a holomorphic bundle over $X$ and $\gamma:V_2\to V_1$ is a holomorphic bundle morphism (i.e. $ \overline\partial_{V_1}\circ\gamma=\gamma\circ\overline\partial_{V_2}$).\par
A sub-triple $T':=((V'_1,\overline\partial_{V'_1}),(V'_2,\overline\partial_{V'_2}),\gamma')$ of $T$ is a triple such that each $(V'_i,\overline\partial_{V'_i})$ is a sub-holomoprhic bundle of $(V_i,\overline\partial_{V_i})$ and the diagram 
\begin{equation*}
\begin{CD}
     (V_2, \overline\partial_{V_2}) @>{\gamma}>> (V_1, \overline\partial_{V_1}) \\
  @AAA   @AAA \\
     (V'_2, \overline\partial_{V'_2})  @>{\gamma'}>>  (V'_1, \overline\partial_{V'_1})
  \end{CD}
\end{equation*}
is commutative.\par
Let $G$ be a vector bundle over $X\times \mathbb{P}^1$ defined as $G=p^\ast V_1\oplus p^\ast V_2\otimes q^\ast\mathcal{O}_{\mathbb{P}^1}(2)$ with the holomorphic structure 
\begin{equation*}
\overline\partial_G
:=
\begin{pmatrix}
p^\ast \overline\partial_{V_1}& p^\ast\gamma\otimes q^\ast \alpha\\
0& p^\ast\overline\partial_{V_2}\otimes Id+Id\otimes q^\ast\overline\partial_{\mathcal{O}_{\mathbb{P}^1}(2)}
\end{pmatrix}.
\end{equation*}
$(G,\overline\partial_G)$ is indeed an $SU(2)$-invariant holomorphic bundle. 
See \cite{G2} or set $\Psi_1=\Psi_2=\phi=0$ in Proposition \ref{prop 4.5}.
\begin{lemma}[{\cite[Lemma 4.3]{BG}}]\label{lem 5.2}
Let $T$ and $G$ be as above. 
There exists a bijective correspondence between sub-triples of $T$ and $SU(2)$-invariant sub-holmorphic bundles of $G.$
In paricular, for every $SU(2)$-invariant sub-holomorphic bundles $G'\subset G$ there exists a subtriple $T':=((V'_1,\overline\partial_{V'_1}),(V'_2,\overline\partial_{V'_2}),\gamma')$ of $T$ such that $G'$ has the form
\begin{equation*}
G'=p^\ast V'_1\oplus p^\ast V'_2\otimes q^\ast\mathcal{O}_{\mathbb{P}^1}(2)
\end{equation*}
with the holomorphic structure
\begin{equation*}
\overline\partial_{G'}
:=
\begin{pmatrix}
p^\ast \overline\partial_{V'_1}& p^\ast\gamma'\otimes q^\ast \alpha\\
0& p^\ast\overline\partial_{V'_2}\otimes Id+Id\otimes q^\ast\overline\partial_{\mathcal{O}_{\mathbb{P}^1}(2)}
\end{pmatrix}.
\end{equation*}
\end{lemma}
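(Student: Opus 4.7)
The plan is to construct maps in both directions explicitly and then observe that they are mutual inverses. The forward direction ``sub-triple $\mapsto$ sub-bundle'' is a routine verification, and the reverse direction ``sub-bundle $\mapsto$ sub-triple'' is the substantive content, relying on the structure theorem for $SU(2)$-equivariant bundles (Proposition \ref{prop 4.1}) together with the vanishing of invariant sections (Proposition \ref{prop 2.1}).

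For the forward direction, given a sub-triple $T' = ((V'_1, \overline\partial_{V'_1}),(V'_2, \overline\partial_{V'_2}), \gamma')$ with $V'_i \subset V_i$ holomorphic sub-bundles and $\gamma(V'_2) \subset V'_1$, I define $G'$ by the stated formula. Smoothly, $G' = p^\ast V'_1 \oplus p^\ast V'_2 \otimes q^\ast \mathcal{O}_{\mathbb{P}^1}(2)$ is visibly an $SU(2)$-equivariant sub-bundle of $G$. To see it is a holomorphic sub-bundle, I check that $\overline\partial_G$ preserves sections of $G'$: the diagonal blocks preserve $G'$ because $V'_i \subset V_i$ are holomorphic, and the off-diagonal term $p^\ast \gamma \otimes q^\ast \alpha$ sends a section of $p^\ast V'_2 \otimes q^\ast \mathcal{O}_{\mathbb{P}^1}(2)$ into $p^\ast V'_1$ precisely because $\gamma(V'_2) \subset V'_1$. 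Thus $\overline\partial_G|_{G'}$ equals the claimed $\overline\partial_{G'}$.

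For the reverse direction, let $G' \subset G$ be any $SU(2)$-invariant holomorphic sub-bundle. By Proposition \ref{prop 4.1}, as an $SU(2)$-equivariant smooth bundle $G'$ decomposes uniquely as $G' = \bigoplus_k p^\ast W_k \otimes q^\ast H^{n_k}$ for distinct integers $n_k$ and some smooth bundles $W_k$ on $X$. The inclusion $\iota : G' \hookrightarrow G$ is an $SU(2)$-equivariant bundle map, so each matrix entry of $\iota$ with respect to the two decompositions is an $SU(2)$-invariant smooth section of $p^\ast \mathrm{Hom}(W_k, V_i) \otimes q^\ast H^{m_i - n_k}$, where $m_1 = 0$ and $m_2 = 2$. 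Restricting to a fiber $\{x_0\} \times \mathbb{P}^1$ and applying Proposition \ref{prop 2.1}(1), any such section vanishes unless $n_k = m_i$. Since $\iota$ is fibrewise injective, for each $k$ some $m_i$ must match $n_k$, forcing $n_k \in \{0, 2\}$. Grouping the summands then identifies $G' = p^\ast V'_1 \oplus p^\ast V'_2 \otimes q^\ast \mathcal{O}_{\mathbb{P}^1}(2)$, where $V'_1 \subset V_1$ and $V'_2 \subset V_2$ are obtained as the smooth images of $\iota$ on the respective direct summands.

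Finally, I read off the sub-triple data from the requirement $\overline\partial_G(G') \subset G'$. Writing out $\overline\partial_G$ in block form, preservation of $G'$ forces $\overline\partial_{V_1}$ to preserve $V'_1$, $\overline\partial_{V_2}$ to preserve $V'_2$, and $\gamma(V'_2) \subset V'_1$; setting $\gamma' := \gamma|_{V'_2}$ produces the sub-triple $T'$, and $\overline\partial_{G'}$ automatically has the block form asserted in the lemma. The two assignments $T' \mapsto G'$ and $G' \mapsto T'$ are inverse by construction. The main obstacle here is the constraint $n_k \in \{0,2\}$ — i.e.\ excluding ``exotic'' Grothendieck twists in the decomposition of $G'$ — which is the only place where $SU(2)$-representation theory plays a nontrivial role, and is handled by the invariant-section vanishing of Proposition \ref{prop 2.1}(1).
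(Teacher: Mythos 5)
The paper does not actually prove Lemma \ref{lem 5.2}; it is quoted verbatim from \cite[Lemma 4.3]{BG} with no argument supplied, so there is no in-paper proof to compare against. Your reconstruction is correct and is exactly the argument one expects (and the one the paper itself deploys repeatedly in nearby statements, e.g.\ in the proofs of Proposition \ref{prop 4.3}, Lemma \ref{lem 5.1} and Proposition \ref{prop 5.3}): decompose the invariant sub-bundle via Proposition \ref{prop 4.1}, kill the off-weight components of the inclusion by restricting to fibres $\{x_0\}\times\mathbb{P}^1$ and invoking Proposition \ref{prop 2.1}(1), use fibrewise injectivity to force the weights into $\{0,2\}$, and then read the sub-triple data off the block form of $\overline\partial_G$. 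The only point worth making explicit is that the resulting invariant bundle maps $p^\ast W_k\to p^\ast V_i$ are pullbacks of maps on $X$ (again by transitivity of the $SU(2)$-action on $\mathbb{P}^1$), which is what lets you write the images as $p^\ast V_1'$ and $p^\ast V_2'\otimes q^\ast\mathcal{O}_{\mathbb{P}^1}(2)$; you use this implicitly and it is harmless.
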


We now prove Proposition \ref{prop 5.3}.
\begin{proof}[proof of Proposition \ref{prop 5.3}]
Let $Q':=((E'_1, \overline\partial_{E'_1},\theta'_1),(E'_2,\overline\partial_{E'_2},\theta'_2),\phi',\psi')$ be sub-Higgs quadruplet of $Q$.

Then we set 
 \begin{align*}
&F':=p^\ast E'_1\oplus p^\ast E'_2\otimes q^\ast \mathcal{O}_{\mathbb{P}^1}(2),\\
&\overline\partial_{F'}:=
\begin{pmatrix}
p^\ast\overline\partial_{E'_1}&p^\ast\psi'\otimes q^\ast\alpha\\
0& p^\ast\overline\partial_{E'_2}\otimes Id+Id\otimes q^\ast\overline\partial_{\mathcal{O}_{\mathbb{P}^1}(2)}
\end{pmatrix},\\
&\theta_{F'}:=
\begin{pmatrix}
p^\ast \theta'_1&0\\
p^\ast\phi'\otimes q^\ast \beta&p^\ast\theta'_2.
\end{pmatrix}.
\end{align*}
Then $(F',\overline\partial_{F'}, \theta_{F'})$ is an $SU(2)$-invariant Higgs bundle by Proposition \ref{prop 4.5}.
By the definition of sub-Higgs quadruplet, $\overline\partial_F|_{F'}=\overline\partial_{F'}$ and $\theta_F|_{F'}=\theta_{F'}$ holds. 
Hence $(F',\overline\partial_{F'}, \theta_{F'})$ is a sub-Higgs bundle of $F$.\par
Suppose $(F',\overline\partial_{F'}, \theta_{F'})$ is an $SU(2)$-invariant sub-Higgs bundle of $(F,\overline\partial_F,\theta_F)$.
Note that from the Higgs quadruplet $Q=((E_1, \overline\partial_{E_1},\theta_1),(E_2,\overline\partial_{E_2},\theta_2),\phi,\psi)$ we obtain a holomorphic triple 
\begin{equation*}
T_Q:=((E_1, \overline\partial_{E_1}),(E_2,\overline\partial_{E_2}), \psi).
\end{equation*}
Since $(F',\overline\partial_{F'})$ is an $SU(2)$-invariant sub-holomoprhic bundle of $(F,\overline\partial_F)$, we can apply Lemma \ref{lem 5.2} and obtain a sub-triple 
\begin{equation*}
T'_Q:=((E'_1, \overline\partial_{E'_1}),(E'_2,\overline\partial_{E'_2}), \psi')
\end{equation*}
of $T_Q$ such that 
 \begin{align*}
&F'=p^\ast E'_1\oplus p^\ast E'_2\otimes q^\ast \mathcal{O}_{\mathbb{P}^1}(2),\\
&\overline\partial_{F'}=
\begin{pmatrix}
p^\ast\overline\partial_{E'_1}&p^\ast\psi'\otimes q^\ast\alpha\\
0& p^\ast\overline\partial_{E'_2}\otimes Id+Id\otimes q^\ast\overline\partial_{\mathcal{O}_{\mathbb{P}^1}(2)}
\end{pmatrix}
\end{align*}
holds. \par
We next study the structure of $\theta_{F'}$.
Recall that $\theta_{F'}\in A^{1,0}(\mathrm{End}F')$ and $\theta_{F'}$ is $SU(2)$-invariant.
Then by Lemma \ref{lem 5.1}, there exists $\theta_{E'_1}\in A^{1,0}(\mathrm{End}E'_1),
 \theta_{E'_2}\in A^{1,0}(\mathrm{End}E'_2)$, and $\phi'\in A(\mathrm{Hom}(E_1,E_2))$ such that 
\begin{equation*}
\theta_{F'}
=
\begin{pmatrix}
p^\ast \theta_{E'_1}&0\\
p^\ast\phi'\otimes q^\ast \beta&p^\ast \theta_{E'_2}
\end{pmatrix}.
\end{equation*}
Since $\theta_{F'}$ is a Higgs field, $(\overline\partial_{F'}+\theta_{F'})^2=0$ holds and hence by the calculation in Proposition \ref{prop 4.5} shows that
\begin{equation*}
Q':=((E'_1, \overline\partial_{E'_1},\theta'_1),(E'_2,\overline\partial_{E'_2},\theta'_2),\phi',\psi')
\end{equation*}
is a Higgs quadruplet.\par
Since $\theta_F|_{F'}=\theta_{F'}$, $Q'$ is a sub-Higgs quadruplet of $Q$.
The construction gives a bijective correspondence, and therefore, we finish the proof.
\end{proof}
\subsection{Kobayashi-Hitchin Correspondence}
 Let $Q=((E_1, \overline\partial_{E_1},\theta_1),(E_2,\overline\partial_{E_2},\theta_2),\phi,\psi)$ be a Higgs quadruplet over $X$.
 Let $F$ be the $SU(2)$-equivalent vector bundle over $X\times \mathbb{P}^1$ defined as 
 \begin{equation*}
F=p^\ast E_1\oplus p^\ast E_2\otimes q^\ast \mathcal{O}_{\mathbb{P}^1}(2).
\end{equation*}
 Let $(F,\overline\partial_F,\theta_F)$ be the Higgs bundle defined as 
\begin{equation*}
\overline\partial_F:=
\begin{pmatrix}
p^\ast\overline\partial_{E_1}&p^\ast\psi\otimes q^\ast\alpha\\
0& p^\ast\overline\partial_{E_2}\otimes Id+Id\otimes q^\ast\overline\partial_{\mathcal{O}_{\mathbb{P}^1}(2)}
\end{pmatrix}
,\,\,
\theta_F:=
\begin{pmatrix}
p^\ast \theta_1&0\\
p^\ast\phi\otimes q^\ast \beta&p^\ast\theta_2.
\end{pmatrix}
\end{equation*}

\begin{lemma}\label{lem 5.3}
Let $E$ be a holomorphic vector bundle over $X\times \mathbb{P}^1$.
Every saturated sub-$\mathcal{O}_{X\times \mathbb{P}^1}$-module $\mathcal{V}\subset E$ is a locally free sheaf.
\end{lemma}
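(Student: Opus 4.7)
The plan is to prove this pointwise: I aim to show that at every closed point $p \in X\times\mathbb{P}^1$ the stalk $\mathcal{V}_p$ is a free module over the regular two-dimensional local ring $\mathcal{O}_p$, which by coherence promotes to local freeness of $\mathcal{V}$ in a neighborhood of $p$. The only structural input I would use is the short exact sequence
\[
0 \to \mathcal{V}_p \to E_p \to (E/\mathcal{V})_p \to 0,
\]
in which $E_p$ is free because $E$ is a vector bundle, and $(E/\mathcal{V})_p$ is torsion-free because $\mathcal{V}$ is saturated in $E$.

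The key step is a depth computation. Since a nonzero torsion-free module over the Cohen-Macaulay local ring $\mathcal{O}_p$ of dimension $2$ has depth at least $1$, I have $\operatorname{depth}_p(E/\mathcal{V}) \geq 1$, while $\operatorname{depth}_p E = 2$ as $E_p$ is free. Applying the depth lemma to the short exact sequence then gives
\[
\operatorname{depth}_p \mathcal{V} \;\geq\; \min\bigl(\operatorname{depth}_p E,\;\operatorname{depth}_p(E/\mathcal{V})+1\bigr) \;\geq\; \min(2,2) \;=\; 2,
\]
and combined with the general bound $\operatorname{depth}_p \mathcal{V} \leq \dim \mathcal{O}_p = 2$ I conclude $\operatorname{depth}_p \mathcal{V} = 2$. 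The Auslander--Buchsbaum formula on the regular local ring $\mathcal{O}_p$ then yields $\operatorname{pd}_{\mathcal{O}_p} \mathcal{V}_p = \dim \mathcal{O}_p - \operatorname{depth}_p \mathcal{V} = 0$, i.e.\ $\mathcal{V}_p$ is free, as required. Since this holds at every closed point and $\mathcal{V}$ is coherent, $\mathcal{V}$ is locally free.

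I do not anticipate a substantial obstacle: the argument is a routine application of commutative algebra on regular two-dimensional local rings (the depth lemma plus Auslander--Buchsbaum). The only conceptual point worth flagging is that this step uses in an essential way that $X \times \mathbb{P}^1$ has complex dimension exactly two; on a smooth base of higher dimension, saturation of $\mathcal{V}$ would only guarantee reflexivity, not local freeness, and an alternative route via the double-dual $\mathcal{V} \hookrightarrow \mathcal{V}^{\vee\vee} \hookrightarrow E$ would be needed instead.
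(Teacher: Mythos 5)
Your argument is correct, but it is worth noting that the paper does not actually argue this lemma at all: its ``proof'' consists of recalling the definition of saturated (i.e.\ that $E/\mathcal{V}$ is torsion-free) and then citing Kobayashi's book (Corollary 5.5.20 and Proposition 5.5.22 of \cite{Ko}) for the fact that such subsheaves of a locally free sheaf are locally free on a complex surface. What you have written is essentially a self-contained proof of that cited fact: the stalk $\mathcal{O}_p$ is a regular (hence Cohen--Macaulay, and in fact a domain) local ring of dimension $2$, the quotient $(E/\mathcal{V})_p$ is torsion-free hence of depth at least $1$ when nonzero (the real point being that any nonzero element of the maximal ideal is a nonzerodivisor on a torsion-free module over a domain), the depth lemma applied to $0\to\mathcal{V}_p\to E_p\to(E/\mathcal{V})_p\to 0$ gives $\operatorname{depth}\mathcal{V}_p=2$, and Auslander--Buchsbaum then forces $\mathcal{V}_p$ to be free; coherence upgrades freeness of stalks to local freeness. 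The only trivial case you leave implicit is $(E/\mathcal{V})_p=0$, where $\mathcal{V}_p=E_p$ is free anyway (or one uses the convention that the zero module has infinite depth). Your closing remark is also accurate and matches the reason the paper can invoke the surface case: in higher dimension saturation only yields reflexivity (equivalently, one would pass through $\mathcal{V}\hookrightarrow\mathcal{V}^{\vee\vee}$), so the dimension-two hypothesis is genuinely used. In short, your route proves directly what the paper delegates to a reference; it buys self-containedness at the cost of a little commutative algebra, while the paper's citation keeps the argument short.
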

\begin{proof}
Recall that a saturated subsheaf $\mathcal{V}\subset E$ is a subsheaf of $E$ such that $E/\mathcal{V}$ is torsion-free.
Such sheaves are locally free on complex surfaces (See \cite[Corollary 5.5.20, Proposition 5.5.22]{Ko}).
\end{proof}

\begin{lemma}\label{lem 5.4}
Let $Q'=((E'_1, \overline\partial_{E'_1},\theta'_1),(E'_2,\overline\partial_{E'_2},\theta'_2),\phi',\psi')$ be any sub-Higgs quadruplet of $Q$ and let $F'$ be the sub Higgs bundle of $F$ which corresponds to $Q'$.
Then 
\begin{equation*}
\mu_\sigma(Q')=\mu(F').
\end{equation*}
Here, we calculate the degree of $F'$ by $\Omega_\sigma$.
\end{lemma}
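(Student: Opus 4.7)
The plan is to unwind both sides of the equality using the explicit description of $F'$ provided by Proposition \ref{prop 5.3} and reduce the degree computation to Lemma \ref{lem 4.1}. The statement is essentially a bookkeeping computation once the structural results are in place.

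First I would invoke Proposition \ref{prop 5.3} to obtain the explicit form of the sub-Higgs bundle associated to $Q'$, namely
\begin{equation*}
F' = p^\ast E'_1 \oplus p^\ast E'_2 \otimes q^\ast \mathcal{O}_{\mathbb{P}^1}(2),
\end{equation*}
with the $\overline\partial$-operator and Higgs field inherited from $Q'$ in the form written down in the proof of Proposition \ref{prop 5.3}. In particular, setting $r'_i := \mathrm{rank}\, E'_i$, we have $\mathrm{rank}\, F' = r'_1 + r'_2$.

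Next I would apply Lemma \ref{lem 4.1} with $V_1 = E'_1$ and $V_2 = E'_2$ (which outputs a vector bundle of exactly the shape of $F'$) to conclude that
\begin{equation*}
\mathrm{deg}_{\Omega_\sigma} F' = \mathrm{deg}\, E'_1 + \mathrm{deg}\, E'_2 + r'_2 \sigma.
\end{equation*}
Dividing by $\mathrm{rank}\, F' = r'_1 + r'_2$ gives the slope of $F'$, which coincides directly with the defining expression
\begin{equation*}
\mu_\sigma(Q') = \frac{\mathrm{deg}\, E'_1 + \mathrm{deg}\, E'_2 + r'_2 \sigma}{r'_1 + r'_2}
\end{equation*}
from the definition of the $\sigma$-slope of a Higgs quadruplet.

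The only subtle point I anticipate is making sure that Lemma \ref{lem 4.1}, which is stated for honest smooth vector bundles, applies to $F'$ even when $F'$ arises only as a saturated sub-sheaf of $F$; but this is handled by Lemma \ref{lem 5.3}, which guarantees that saturated sub-sheaves of a holomorphic bundle on the surface $X \times \mathbb{P}^1$ are in fact locally free, so $F'$ is a genuine holomorphic sub-bundle and the Chern-class computation in Lemma \ref{lem 4.1} goes through verbatim. Everything else is routine.
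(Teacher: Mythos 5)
Your proposal is correct and follows essentially the same route as the paper: use the explicit form $F'=p^\ast E'_1\oplus p^\ast E'_2\otimes q^\ast\mathcal{O}_{\mathbb{P}^1}(2)$ coming from Proposition \ref{prop 5.3}, apply Lemma \ref{lem 4.1} to compute $\deg_{\Omega_\sigma}F'=\deg E'_1+\deg E'_2+\sigma r'_2$, and divide by $\mathrm{rank}\,F'=r'_1+r'_2$ to match the definition of $\mu_\sigma(Q')$. Your extra remark about Lemma \ref{lem 5.3} is harmless but not needed here, since $F'$ is by construction a genuine subbundle rather than merely a saturated subsheaf.
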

\begin{proof}
Recall that $F'$ has the form
\begin{equation*}
F'=p^\ast E_1\otimes p^\ast E_2\otimes q^\ast\mathcal{O}_{\mathbb{P}^1}(2).
\end{equation*}
Then by Lemma \ref{lem 4.1} the degree of $F'$ with respect to the K\"ahler form $\Omega_\sigma$ is 
\begin{equation*}
\mathrm{deg}F'=\mathrm{deg}E'_1+\mathrm{deg}E'_2+\sigma \cdot\mathrm{rank}E'_2.
\end{equation*}
Hence 
\begin{equation*}
\mu_\sigma(Q')=\mu(F').
\end{equation*}
holds.
\end{proof}

\begin{proposition}\label{prop 5.4}
Let $\tau=\mu_\sigma(Q).$
The following three conditions are equivalent.
\begin{itemize}
\item[1.] $Q$ is $\sigma$-stable.
\item[2.] $Q$ is $\tau$-stable.
\item[3.] $F$ is stable with respect to the $SU(2)$-action and the K\"ahler form $\Omega_\sigma$.
\end{itemize}
\end{proposition}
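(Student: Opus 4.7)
The plan is to handle the equivalence in two stages. The equivalence of conditions 1 and 2 is immediate from Proposition \ref{prop 5.2}, since we have set $\tau=\mu_\sigma(Q)$, which is precisely the relation required in that proposition to identify $\sigma$-stability with $\tau$-stability. So the substantive task is to prove $1\Leftrightarrow 3$.

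For $1\Leftrightarrow 3$ I would use the three preparatory results that have just been set up. Proposition \ref{prop 5.3} gives a bijective correspondence between sub-Higgs quadruplets $Q'\subset Q$ and $SU(2)$-invariant sub-Higgs bundles $F'\subset F$, under which $F'$ has the explicit form $F'=p^\ast E'_1\oplus p^\ast E'_2\otimes q^\ast\mathcal{O}_{\mathbb{P}^1}(2)$. Lemma \ref{lem 5.4} then tells us that this correspondence preserves slopes, $\mu_\sigma(Q')=\mu(F')$, and applied to $Q'=Q$ it also gives $\mu_\sigma(Q)=\mu(F)$. Consequently the inequality $\mu_\sigma(Q')<\mu_\sigma(Q)$ for every proper non-trivial sub-Higgs quadruplet translates directly into $\mu(F')<\mu(F)$ for every proper $SU(2)$-invariant sub-Higgs bundle of $F$. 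The fact that the correspondence sends proper non-trivial sub-quadruplets to proper non-zero sub-Higgs bundles is clear from the explicit form of $F'$ above.

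The only gap between what we have just described and the actual definition of stability with respect to the $SU(2)$-action is that the latter is phrased in terms of $SU(2)$-invariant saturated sub-Higgs sheaves, rather than sub-Higgs bundles. This is where Lemma \ref{lem 5.3} plays its role: since $X\times\mathbb{P}^1$ is a complex surface, every saturated sub-sheaf of the locally free sheaf $F$ is itself locally free, hence a sub-Higgs bundle. Thus each $SU(2)$-invariant proper saturated sub-Higgs sheaf of $F$ is automatically an $SU(2)$-invariant sub-Higgs bundle, which by Proposition \ref{prop 5.3} comes from a sub-Higgs quadruplet of $Q$. Conversely, a sub-Higgs bundle of $F$ has locally free, and in particular torsion-free, quotient, so it is saturated. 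Therefore the two stability conditions are word-for-word equivalent under the bijective correspondence, completing the equivalence $1\Leftrightarrow 3$.

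The main obstacle I anticipate is purely bookkeeping: verifying that ``$SU(2)$-invariant saturated sub-Higgs sheaf of $F$'' really is the same notion as ``$SU(2)$-invariant sub-Higgs bundle of $F$'' in this setting. The Higgs field compatibility is automatic from being a sub-Higgs \emph{sheaf}, and the locally-free conclusion is exactly Lemma \ref{lem 5.3}; once this is in place, the rest of the argument consists of routine comparisons of slopes via Lemma \ref{lem 5.4}.
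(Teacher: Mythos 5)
Your proposal is correct and follows essentially the same route as the paper: the equivalence $1\Leftrightarrow 2$ via Proposition \ref{prop 5.2}, and $1\Leftrightarrow 3$ via the bijection of Proposition \ref{prop 5.3}, the slope identity of Lemma \ref{lem 5.4}, and Lemma \ref{lem 5.3} to reduce saturated sub-Higgs sheaves to sub-Higgs bundles. Your explicit bookkeeping about saturation versus sub-bundles is exactly the point the paper handles (more tersely) with Lemma \ref{lem 5.3}.
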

\begin{proof}
The first two conditions are equivalent from \cite[Proposition 3.4]{BG}. We prove the equivalence of $1$ and $3$.\par
Let $Q'$ be any sub-Higgs quadruplet of $Q$ and let $F'$ be the corresponding $SU(2)$-invariant sub-Higgs bundle of $F$.
Then, by Proposition \ref{prop 5.3} and Lemma \ref{lem 5.4}, we have
\begin{equation*}
\mu_\sigma(Q')=\mu(F').
\end{equation*}
Therefore $Q$ is $\sigma$-stable if $F$ is stable with respect to the $SU(2)$-action.\par
The converse follows from Lemma \ref{lem 5.3} that all the saturated sub-Higgs sheaves of $F$ are actually a sub-Higgs bundle.
\end{proof}

We are now able to state the main result of the paper.

\begin{theorem}\label{thm 5.1}
Let $\sigma\in\mathbb{R}_{>0}$.
Let $Q=((E_1, \overline\partial_{E_1},\theta_1),(E_2,\overline\partial_{E_2},\theta_2),\phi,\psi)$ be a $\sigma$-stable Higgs quadruplet over $X$ and $\tau=\mu_\sigma(Q)$. 
Then there exist hermitian metrics $h_1$ and $h_2$ on $E_1$ and $E_2$ such that $(Q,h_1,h_2)$ is a solution of the doubly-coupled $\tau$-vortex equation.
The metrics $h_1$ and $h_2$ are unique up to positive constants.
\end{theorem}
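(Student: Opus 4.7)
The plan is to assemble the pieces already established in Sections \ref{sec 4} and \ref{sec 5}; no new analytic input is needed beyond what is packaged into Simpson's theorem.

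First I would apply Proposition \ref{prop 5.4} to convert the hypothesis on $Q$: since $Q$ is $\sigma$-stable, the associated $SU(2)$-invariant Higgs bundle $(F,\overline\partial_F,\theta_F)$ over $X\times \mathbb{P}^1$ is stable with respect to the $SU(2)$-action and the K\"ahler form $\Omega_\sigma$. In particular it is polystable with respect to the $SU(2)$-action, which is what is required to feed into Proposition \ref{prop 5.1}.

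Next I would check that the choice $\tau=\mu_\sigma(Q)=\frac{\mathrm{deg}E_1+\mathrm{deg}E_2+\sigma\cdot r_2}{r_1+r_2}$ made in the hypothesis is exactly the relation (\ref{sec 4.3 eq 1}) required by Proposition \ref{prop 5.1}, so that the setups agree. Applying the implication ``polystability of $F$ with respect to the $SU(2)$-action $\Rightarrow$ existence of the metrics'' from Proposition \ref{prop 5.1}, I obtain hermitian metrics $h_1$ on $E_1$ and $h_2$ on $E_2$ such that $(Q,h_1,h_2)$ is a solution of the doubly-coupled $\tau$-vortex equation. This gives existence.

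For uniqueness, I would invoke the concluding clause of Proposition \ref{prop 5.1}: because $(F,\overline\partial_F,\theta_F)$ is \emph{stable} (not merely polystable) with respect to the $SU(2)$-action, the corresponding $SU(2)$-invariant Hermitian--Einstein metric is unique up to a positive multiplicative constant by Theorem \ref{thm 2.1}, and the decomposition $h=p^\ast h_1\oplus p^\ast h_2\otimes q^\ast h^{(2)}$ coming from Proposition \ref{prop 4.1} then forces $h_1$ and $h_2$ to be unique up to positive constants.

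There is no serious obstacle in this final assembly: all the genuine content sits upstream in the dimensional reduction and curvature computation (Proposition \ref{prop 4.7}), in the identification of sub-Higgs quadruplets of $Q$ with $SU(2)$-invariant sub-Higgs bundles of $F$ together with the matching of slopes (Proposition \ref{prop 5.3} and Lemma \ref{lem 5.4}), and in Simpson's equivariant Kobayashi--Hitchin correspondence (Theorem \ref{thm 2.1}). Theorem \ref{thm 5.1} is then a formal consequence of these three ingredients chained together.
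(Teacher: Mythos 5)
Your proposal is correct and follows essentially the same route as the paper: Proposition \ref{prop 5.4} converts $\sigma$-stability of $Q$ into stability of $(F,\overline\partial_F,\theta_F)$ with respect to the $SU(2)$-action and $\Omega_\sigma$, and then Proposition \ref{prop 5.1} (which packages Theorem \ref{thm 2.1} and Proposition \ref{prop 4.7}) yields the metrics $h_1,h_2$, with the uniqueness clause of Proposition \ref{prop 5.1} giving uniqueness up to positive constants in the stable case. Your check that $\tau=\mu_\sigma(Q)$ agrees with the relation (\ref{sec 4.3 eq 1}) is the same compatibility the paper relies on implicitly.
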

\begin{proof}
If $Q$ is $\sigma$-stable, then $F$ is stable with respect to the $SU(2)$-action and the K\"ahler form $\Omega_\sigma$ by Proposition \ref{prop 5.4}.
Then by Theorem \ref{thm 2.1}, there exists an $SU(2)$-invariant Hermitian-Einstein metric $h$ on $F$.
It follows from Proposition \ref{prop 5.1} that there exist hermitian metrics $h_1$ and $h_2$ on $E_1$ and $E_2$ such that $(Q,h_1,h_2)$ is a solution of the doubly-coupled $\tau$-vortex equation.
\end{proof}

We also have 
\begin{proposition}\label{prop 5.5}
Let $Q=((E_1, \overline\partial_{E_1},\theta_1),(E_2,\overline\partial_{E_2},\theta_2),\phi,\psi)$ be a Higgs quadruplet of $\mathrm{rank} E_1=r_1$ and $\mathrm{rank} E_2=r_2$.
Let $(F,\overline\partial_F,\theta_F)$ be the $SU(2)$-invariant Higgs bundle whcih corresponds to $Q$.
Let $\tau\in\mathbb{R}$ be a constant such that 
\begin{equation*}
\sigma:=\frac{(r_1+r_2)\cdot\tau-\mathrm{deg}E_1-\mathrm{deg}E_2}{r_2}>0
\end{equation*}
holds.\par
If $E_1$ and $E_2$ admits hermitian metrics $h_1$ and $h_2$ such that $(Q,h_1,h_2)$ is a solution of the doubly-coupled $\tau$-vortex equation, then $Q$ is $\sigma$-stable.
\end{proposition}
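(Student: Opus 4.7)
The plan is to go in the reverse direction of Theorem 5.1: start with the analytic solution, promote it to an invariant Hermitian--Einstein metric upstairs on $X\times\mathbb{P}^1$, invoke Simpson's Kobayashi--Hitchin correspondence with the $SU(2)$-action, and finally translate the resulting stability statement back downstairs via the correspondence between sub-Higgs quadruplets and $SU(2)$-invariant sub-Higgs sheaves. Throughout, the normalization $\sigma := ((r_1+r_2)\tau - \mathrm{deg}E_1 - \mathrm{deg}E_2)/r_2 > 0$ is exactly what makes the constant $\tau$ in the vortex equation agree with the Einstein constant for the K\"ahler form $\Omega_\sigma$ on $F$.

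First, given $(Q,h_1,h_2)$ a solution of the doubly-coupled $\tau$-vortex equation, form the $SU(2)$-invariant Hermitian metric $h := p^\ast h_1\oplus p^\ast h_2\otimes q^\ast h^{(2)}$ on the Higgs bundle $(F,\overline\partial_F,\theta_F)$. By Proposition \ref{prop 4.7} (equivalently, the Hermitian-Einstein direction of Proposition \ref{prop 5.1}), the hypothesis that $(Q,h_1,h_2)$ solves the doubly-coupled $\tau$-vortex equation is equivalent to $h$ being an $SU(2)$-invariant Hermitian-Einstein metric on $(F,\overline\partial_F,\theta_F)$ with respect to $\Omega_\sigma = (\tfrac{\sigma}{2}p^\ast\omega)\oplus q^\ast\omega_{\mathbb{P}^1}$; the hypothesis $\sigma>0$ guarantees $\Omega_\sigma$ is a genuine K\"ahler form.

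Second, apply Simpson's Kobayashi-Hitchin correspondence for Higgs bundles with group action (Theorem \ref{thm 2.1}) to conclude that $(F,\overline\partial_F,\theta_F)$ is polystable with respect to the $SU(2)$-action and $\Omega_\sigma$. Finally, I would transfer this stability back to $Q$: for any non-trivial sub-Higgs quadruplet $Q'\subset Q$, Proposition \ref{prop 5.3} produces a corresponding $SU(2)$-invariant sub-Higgs bundle $F'\subset F$, and Lemma \ref{lem 5.4} yields $\mu_\sigma(Q') = \mu(F')$ and $\mu_\sigma(Q) = \mu(F)$. Polystability of $F$ with respect to the $SU(2)$-action therefore forces $\mu_\sigma(Q') \leq \mu_\sigma(Q)$, and moreover the $SU(2)$-invariant direct-sum decomposition $F = \bigoplus_i F_i$ into stable $SU(2)$-invariant Higgs summands pulls back (again via Proposition \ref{prop 5.3}) to a decomposition $Q = \bigoplus_i Q_i$ into $\sigma$-stable sub-Higgs quadruplets, each of slope $\mu_\sigma(Q)$. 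Lemma \ref{lem 5.3} is used along the way to ensure that the saturated sub-Higgs sheaves entering the stability definition of $F$ are in fact locally free, so that Proposition \ref{prop 5.3} applies.

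The main conceptual point to watch is the stable/polystable distinction: Simpson's theorem produces polystability rather than strict stability, so what the argument actually yields is that $Q$ is $\sigma$-polystable (matching the ``if and only if'' formulation quoted in the introduction). In the stable case the argument still works: the polystable decomposition of $F$ must be trivial (since $\sigma$-stability of $Q$ rules out a nontrivial direct-sum splitting via Proposition \ref{prop 5.3}), and the strict slope inequality for any proper $F'$ transfers to $Q'$ by Lemma \ref{lem 5.4}. The only genuine obstacle is verifying the compatibility of decompositions under the correspondence of Proposition \ref{prop 5.3}, namely that an $SU(2)$-invariant direct-sum splitting of $F$ as a Higgs bundle corresponds to a direct-sum splitting of $Q$ as a Higgs quadruplet; this should follow by applying Proposition \ref{prop 5.3} to each summand and checking that the off-diagonal components of $\overline\partial_F$ and $\theta_F$ vanish precisely when the corresponding morphisms $\psi$ and $\phi$ split block-diagonally.
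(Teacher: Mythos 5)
Your proposal follows essentially the same route as the paper's own proof: use Proposition \ref{prop 5.1} (via Proposition \ref{prop 4.7} and Simpson's Theorem \ref{thm 2.1}) to get $SU(2)$-polystability of $(F,\overline\partial_F,\theta_F)$, then transfer the polystable decomposition back to $Q$ through Proposition \ref{prop 5.3}, Proposition \ref{prop 5.4} and Lemma \ref{lem 5.4}. Your remark that the argument actually yields $\sigma$-polystability rather than $\sigma$-stability is correct and matches what the paper's proof itself concludes, despite the wording of the statement.
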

\begin{proof}
Suppose that $E_1$ and $E_2$ admits hermitian metrics $h_1$ and $h_2$ such that $(Q,h_1,h_2)$ is a solution of the doubly-coupled $\tau$-vortex equation.
Then by Proposition \ref{prop 5.1}, $(F,\overline\partial_F,\theta_F)$ is polystable with respect to the $SU(2)$-action.
Hence there exists a decomposition $(F,\overline\partial_F,\theta_F)=\oplus_i(F_i,\overline\partial_{F_i},\theta_{F_i})$ such that  each $(F_i,\overline\partial_{F_i},\theta_{F_i})$ is stable and $\mu(F)=\mu(F_i)$.
This decomposition induces a decomposition $Q=\oplus_i Q_i$.
Each $Q_i$ and $(F_i,\overline\partial_{F_i},\theta_{F_i})$ are related as in Proposition \ref{prop 5.3}.
Hence each $Q_i$ is $\sigma$-stable by Proposition \ref{prop 5.4}.
Since $\mu_\sigma(Q)=\mu(F)$ from Lemma \ref{lem 5.4}, we conclude that $Q$ is $\sigma$-polystable.
\end{proof}

\end{document}